\newtheorem{theorem}{Theorem}
\newtheorem{lemma}{Lemma}
\newtheorem{corollary}{Corollary}
\newtheorem{definition}{Definition}
\def\T{{ \mathrm{\scriptscriptstyle T} }}
\def\sqrt{\surd}
\newcommand{\bsb}{{}}
\newcommand{\rd}{\,\mathrm{d}}
\newcommand{\bsbX}{{{X}}}
\newcommand{\bsbx}{{{x}}}
\newcommand{\bsby}{{{y}}}
\newcommand{\bsbY}{{{Y}}}
\newcommand{\be}{{{e}}}
\newcommand{\bsbb}{{{\beta}}}
\newcommand{\bsbGamma}{{{\Gamma}}}
\newcommand{\bsbI}{{{I}}}
\newcommand{\bsbZ}{{{Z}}}
\newcommand{\bsbSig}{{{\Sigma}}}
\newcommand{\bsbD}{{{D}}}
\newcommand{\bsbU}{{{U}}}
\newcommand{\bsbV}{{{V}}}
\newcommand{\bsba}{{{\alpha}}}
\newcommand{\bsbA}{{{A}}}
\newcommand{\bsbC}{{{C}}}
\newcommand{\bsbE}{{{\mathcal E}}}
\newcommand{\bsbdelta}{{{\delta}}}
\newcommand{\bsbS}{{{S}}}
\newcommand{\bsbc}{{{c}}}
\newcommand{\bsbB}{{{B}}}
\newcommand{\bsbDelta}{{{\Delta}}}
\newcommand{\outl}{\varrho}
\newcommand{\Proj}{{{\mathcal P}}}
\newcommand{\essinf}{{\mathrm{ess}\inf}}
\newcommand{\EP}{\,{\mathrm{pr}}}
\newcommand{\EE}{\,{E}}
\DeclareMathOperator{\vect}{\mbox{vec}\,}
\def\T{{ \mathrm{\scriptscriptstyle T} }}
\def\^T{{\T}}
\newcommand{\tr}{\mbox{tr}}
\newcommand{\tF}{{\textrm{F}}}
\begin{document}

\title{Robust reduced-rank regression}
%
\author[1]{Yiyuan She}
\author[2]{Kun Chen}
\affil[1]{Department of Statistics, Florida State University}
\affil[2]{Department of Statistics, University of Connecticut}
\date{}
\maketitle

\begin{abstract}
In high-dimensional multivariate regression problems, enforcing low rank in the coefficient matrix offers    effective  dimension reduction, which greatly facilitates parameter estimation and model interpretation. However, commonly-used reduced-rank methods are sensitive to   data corruption, as the low-rank dependence structure between response variables and predictors is easily distorted by outliers. We propose a robust reduced-rank regression approach for joint   modeling and outlier detection. The problem is formulated as a regularized multivariate regression with a sparse mean-shift parametrization, which generalizes and unifies some popular robust multivariate methods. An efficient thresholding-based iterative procedure is developed for optimization. We show that the algorithm is guaranteed to converge, and the  coordinatewise minimum point produced is statistically accurate    under  regularity conditions. Our theoretical investigations focus on nonasymptotic robust analysis, which demonstrates that  joint rank reduction and outlier detection  leads to improved prediction accuracy. In particular,    we show that redescending   $\psi$-functions   can essentially attain the minimax optimal error rate, and    in some less challenging problems   convex regularization guarantees  the same low error rate. The performance of the proposed method is examined  by simulation studies and   real data examples.\\
\noindent Keywords: low-rank matrix approximation; nonasymptotic analysis; robust estimation; sparsity.
\end{abstract}



\section{Introduction}
\label{sec:intro}

Given $n$ observations of $m$ response variables and $p$ predictors, denoted by $\bsby_i\in \mathbb{R}^m$ and $\bsbx_i\in \mathbb{R}^p$ for $i=1,\ldots,n$, we consider the   multivariate regression model
\begin{equation}
\bsbY = \bsbX\bsbB^* +\bsbE, \label{model1}
\end{equation}
where $\bsbY = (\bsby_{1},\ldots,\bsby_n)^\T$, $\bsbX=(\bsbx_1,\ldots,\bsbx_n)^\T$, $\bsbB^*\in \mathbb{R}^{p\times m}$ is an unknown coefficient matrix, and $\bsbE=(\be_1,\ldots,\be_n)^\T \in\mathbb{R}^{n\times m}$ is a random error matrix. Such a high-dimensional multivariate problem,  in which both $p$ and $m$ may be comparable to or even exceed the sample size $n$, has drawn increasing attention in both applied and theoretical statistics.


Conventional least squares linear regression ignores the  multivariate nature of the problem and may  fail when $p$   is large relative to $n$. Dimension reduction holds the key to characterizing the dependence between   responses  and   predictors in a parsimonious way. Reduced-rank regression \citep{anderson1951,izenman1975} achieves this  by restricting the rank of the coefficient matrix, i.e.,
by solving the problem\begin{align}
\min_{\bsbB\in \mathbb R^{p\times m}} \tr\{(\bsbY - \bsbX \bsbB) \bsbGamma (\bsbY - \bsbX \bsbB)^\T\} \quad \mbox{subject to} \ r(\bsbB) \leq r, \label{rrr}
\end{align}
where $\tr(\cdot)$ and $r(\cdot)$ denote   trace and rank, and $\bsbGamma$ is a pre-specified positive definite weighting matrix \citep{reinsel1998}. The ranks   are typically much smaller than   $m$ and $p$. A global solution to \eqref{rrr} can be obtained explicitly. 
See \cite{reinsel1998} for a  comprehensive account of   reduced-rank regression under the classical large-$n$ asymptotic regime. Finite-sample theories  on  rank selection and estimation   accuracy of   the penalized form of reduced-rank regression were developed by \citet{bunea2011}.
The nuclear norm and Schatten $p$-norms can also  be used to  promote  sparsity of the singular values of  $\bsbB$ or $\bsbX\bsbB$; see   \cite{yuan2007}, \cite{koltch2011}, \cite{Rohde2011}, \citet{agarwal2012}, \citet{Foygel2012}, \cite{chen2012ann},  among others. Reduced-rank regression is closely connected with principal component analysis, canonical correlation analysis, partial least squares,  matrix completion, and many other multivariate methods \citep{izenman2008}. 




Although reduced-rank regression can substantially reduce the number of free parameters in multivariate problems,  it is extremely sensitive to outliers, which are bound to occur, and thus in real-world data analysis,   the low-rank structure could easily be masked or distorted. This is even more  serious in high-dimensional or big-data applications. For example, in cancer genetics, multivariate regression is commonly used to explore the associations between genotypical and phenotypical characteristics \citep{vounou2010}, where employing rank regularization can help to reveal  latent regulatory pathways linking the two sets of variables. But   pathway recovery  should  not  be   distorted by  abnormal samples or subjects. As another example,   financial time series, even after stationarity transformation, often contain  anomalies or demonstrate heavier tails than those of a normal distribution, which may jeopardize the recovery of common market behaviors and   asset return forecasting. 


Consider the 52 weekly stock log-return data for nine of the ten largest American corporations in 2004 available from the R package \textsc{mrce} \citep{Rothman2010}, with $\bsby_t\in\mathbb{R}^9$ ($t=1,\ldots,T$) and $T=52$.
Chevron was excluded   due to its drastic  changes  \citep{yuan2007}. The nine time series are shown in Figure \ref{fig:stock}.  For the purpose of  constructing market factors that drive general stock movements, a reduced-rank vector autoregressive model can be used, i.e., $\bsby_{t} =\bsbB^*\bsby_{t-1}+\bsb{e}_t$, with $\bsbB^*$ of low rank. By conditioning on the initial state $\bsby_0$ and assuming the normality of $\bsb{e}_t$, the conditional likelihood leads to a least squares criterion, so the estimation of $\bsbB^*$ can be formulated as a reduced-rank regression problem \citep{Reinsel1997,Ltkepohl2007}. However, as shown in the figure, several stock returns experienced  short-term changes, and the autoregressive structure  makes any outlier in the time series also a leverage point in the covariates.

Using the weekly log-returns in the first 26 weeks for training and those in the last 26 weeks for forecast, we analyzed the data with the reduced-rank regression and the proposed robust reduced-rank regression approach. While both methods resulted in unit-rank models, the robust reduced-rank regression automatically detected three outliers, i.e., the log-returns of Ford at weeks 5 and 17 and the log-return of General Motors at week 5.
These  correspond to two real major market disturbances attributed to the auto industry.   Our robust method automatically took   the   outlying samples into account and led to a more reliable model. 
 Table \ref{table:stock} displays the factor coefficients indicating how the stock returns are related to the estimated factors, and the $p$-values for testing the associations between the estimated factors and the individual stock return series using the data in the last 26 weeks. The stock factor estimated robustly has positive influence over all nine companies, and overall, it   correlates with the   series better according to the reported  $p$-values. The   out-of-sample prediction errors for least squares, reduced-rank regression and robust reduced-rank regression   are $9.97$, $8.85$ and $6.72$, respectively, when measured by  mean square  error, and are   $5.44$, $4.52$ and $3.58$, respectively, when measured by   40\% trimmed mean square error. The robustification of rank reduction resulted in about 20\% improvement in prediction.

\begin{figure}[h]
\begin{center}
{\includegraphics[width=\textwidth]{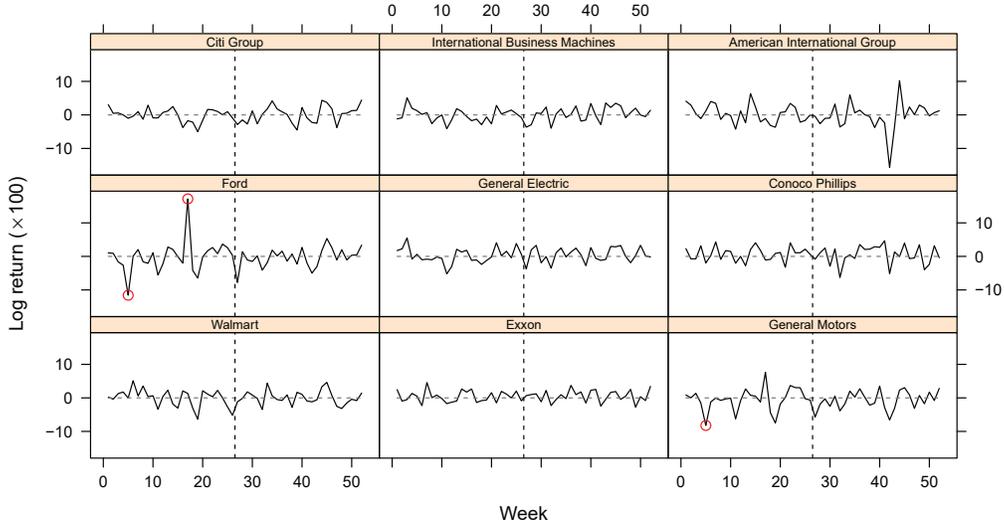}}
\caption{Stock return example: scaled weekly log-returns of stocks in 2004. The log-returns of Ford at weeks 5 and 17 and the log-return of General Motors at week 5 are captured as outliers by fitting robust reduced-rank regression with data in the first 26 weeks; the corresponding points are indicated by the circles. The dashed line in each panel separates the series to two parts, i.e., the first 26 weeks for training and the last 26 weeks for testing. The horizontal line in each panel is drawn at zero height.}\label{fig:stock}
\end{center}
\end{figure}


\begin{table}
\footnotesize
\caption{\label{table:stock} \small Stock return example: the factor coefficients showing how the stock returns load on the estimated factors, and the $p$-values for testing the associations between the estimated factors and the stock returns using the data in the last 26 weeks 
}
\centering
\begin{tabular}{lcccc}









           & \multicolumn{ 2}{c}{Reduced-rank regression} & \multicolumn{ 2}{c}{Robust reduced-rank regression} \\

           & coefficient &    $p$-value & coefficient &    $p$-value \\
   Walmart &      0$\cdot$46 &      0$\cdot$44 &      0$\cdot$36 &      0$\cdot$23 \\

     Exxon &     $-$0$\cdot$15 &      0$\cdot$32 &      0$\cdot$14 &      0$\cdot$84 \\

        General Motors &      0$\cdot$96 &      0$\cdot$42 &      0$\cdot$90 &      0$\cdot$02 \\

      Ford &      1$\cdot$20 &      0$\cdot$64 &      0$\cdot$59 &      0$\cdot$18 \\

        General Electric &      0$\cdot$24 &      0$\cdot$67 &      0$\cdot$32 &      0$\cdot$06 \\

Conoco Phillips &    $-$0$\cdot$04 &      0$\cdot$19 &      0$\cdot$36 &      0$\cdot$08 \\

 Citi Group &      0$\cdot$27 &      0$\cdot$93 &      0$\cdot$45 &      0$\cdot$00 \\

       International Business Machines &      0$\cdot$36 &      0$\cdot$42 &      0$\cdot$57 &      0$\cdot$13 \\

       American International Group &      0$\cdot$19 &      0$\cdot$01 &      0$\cdot$58 &      0$\cdot$00 \\
\end{tabular}
\end{table}

In this work, we deem explicit outlier detection to be as  important  as robust low-rank estimation. Indeed,  the reduced-rank component may not  be of direct interest in some applications, as it often represents common background information shared across the response variables, while capturing  unusual    changes or jumps  is   helpful. The robustification of low-rank matrix estimation is non-trivial. A straightforward idea might be  to use  a robust loss function $\rho$ in place of   the squared error loss in \eqref{rrr}, leading to
\begin{align}
\min_{\bsbB}  \sum_{i=1}^{n} \rho( \|\bsbGamma^{1/2}(\bsby_i - \bsbB^\T\bsbx_i) \|_2) \qquad  \mbox{subject to}  \ r(\bsbB) \leq r,  \label{rob-rhorow}
\end{align}
but  such an estimator may  be difficult to compute. To the best of our knowledge, even when $\rho$ is  Huber's loss function \citep{Huber1981}, there is no   algorithm for solving \eqref{rob-rhorow}, let alone those nonconvex losses which are known to be more effective in dealing with multiple gross outliers with possibly high leverage values.   Another motivation   is that  nonasymptotic   theory  on the topic is  limited.     Classical robust analysis, ignoring   the low-rank constraint,  falls in either deterministic worst-cases studies,  or large-$n$   asymptotics  with $p$ and $m$  held fixed, which  may not  meet   modern   needs.

We propose a novel robust reduced-rank regression method for concurrent robust modeling and outlier identification. We explicitly introduce a sparse mean-shift outlier component and formulate a shrinkage  multivariate regression  in place of \eqref{rob-rhorow}, where  $p$ and/or $m$ can be  much larger than $n$.  The robust reduced-rank regression provides a general framework  and includes  M-estimation  and principal component pursuit    \citep{Huber1981,Hampel1986,ZhouCandes2010,candes2011}. It is worth mentioning that all the techniques developed in this work apply to high-dimensional sparse regression with a single response. In Section \ref{sec:rquad}, we show that low-rank estimation can be ruined by a single rogue point, and propose a robust reduced-rank estimation framework. A universal connection between the proposed robustification  and   conventional   M-estimation  is established,  regardless of the size of $p$, $m$ or $n$.   Section \ref{sec:theory} performs finite-sample theoretical studies  of   the proposed robust estimators, with the intention of    pushing     classical robust analysis to  multivariate data with possible large $p$ and/or $m$.    A computational algorithm    developed in Section \ref{sec:ipod} is easy to implement and leads to a coordinatewise minimum point with theoretical guarantees.     Section \ref{sec:app} shows some real applications.  
All proofs and simulation studies  are given in the Appendices. 

The following notation and symbols will be used throughout the paper.  We denote by    $\mathbb{N}$    the set of natural numbers. We use $a\wedge b$ to denote $\min(a, b)$ and $e$ to denote the Euler constant. Let $[n] = \{ 1, \ldots, n\}$. Given any matrix $\bsbA$,     $\Proj_{\bsbA}$  denotes    the orthogonal projection matrix onto the  range of $\bsbA$, i.e., $\bsbA ( \bsbA^\T \bsbA)^- \bsbA^\T$, where   $^-$ stands for the Moore--Penrose pseudoinverse. When there is no ambiguity, we also use     $\Proj_{\bsbA}$   to denote the column space of $\bsbA$. Let    $\|\bsbA\|_{\tF}$ denote  the Frobenius norm, $\|\bsbA\|_2$ denote  the spectral norm,   $\| \bsbA\|_0=\|\vect(\bsbA)\|_0=|\{(i, j): \bsbA(i,j)\neq 0\}|$ with $|\cdot|$  denoting the cardinality of the enclosed set. For $\bsbA=(\bsb{a}_1 \ \ldots \ \bsb{a}_n)^\T\in \mathbb R^{n\times m}$,  $\|\bsbA\|_{2,1}=\sum_{i=1}^n  \| \bsb{a}_i\|_2$, and $\|\bsbA\|_{2,0}=\sum_{i=1}^n 1_{\| \bsba_i\|  \ne 0}$ which gives    the number of non-zero rows of $\bsbA$. Given $\mathcal J \subset [n]$, we often denote $\sum_{i \in \mathcal J} \|\bsb{a}_i\|_2$ by  $\|\bsbA_{\mathcal J}\|_{2,1}$.  Threshold functions  are defined as follows.
   \begin{definition}[Threshold function]\label{def:threshold}
A threshold function is a real-valued
function  $\Theta(t;\lambda)$ defined for $-\infty<t<\infty$
and $0\le\lambda<\infty$ such that
(i) $\Theta(-t;\lambda)= -\Theta(t;\lambda)$;
(ii) $\Theta(t;\lambda)\le \Theta(t';\lambda)$ for $t\le t'$; 
(iii) $\lim_{t\to\infty} \Theta(t;\lambda)=\infty$;
(iv) $0\le \Theta(t;\lambda)\le t$\ for\ $0\le t<\infty$.
\end{definition}

\begin{definition}[Multivariate Threshold function]\label{def:thresholdmulti}
Given any $\Theta$,   $\vec\Theta$    is    defined  for any vector $\bsb{a}\in \mathbb R^{m}$ such that   $\vec\Theta(\bsb{a};\lambda)=
\bsb{a} \Theta(\|\bsb{a}\|_2;\lambda)/\|\bsb{a}\|_2$ for $\bsb{a}\neq  \bsb{0}$ and $\bsb{0}$ otherwise. For any matrix $\bsbA=( \bsb{a}_1 \  \ldots \ \bsb{a}_n  )^\T\in {\mathbb R}^{n\times m}$, $\vec\Theta(\bsbA;\lambda) = \{   \vec\Theta({\bsb{a}}_1;\lambda) \ \ldots \ \vec\Theta({\bsb{a}}_n;\lambda)  \}^\T$.
\end{definition}






\section{Robust Reduced-Rank Regression}\label{sec:rquad}


\subsection{Motivation}\label{sec:rquad:motivation}
 Although reduced-rank regression is associated with a   highly nonconvex problem  \eqref{rrr}, a global minimizer $\hat{\bsbB}$ can be obtained in explicit form. Given any $r$ ($1\leq r\leq \min(m,q)$) with $q=r(\bsbX)$,
\begin{align}
 \hat \bsbB (r) = \mathcal R(\bsbX, \bsbY, \bsbGamma, r) =  (\bsbX^\T \bsbX)^- \bsbX^\T \bsbY \bsbGamma^{1/2}\Proj_{\bsbV(\bsbX, \bsbY, \bsbGamma, r)} \bsbGamma^{-1/2}, \label{rrr-sol}
\end{align}
where    $\bsbV(\bsbX, \bsbY, \bsbGamma, r)$  is formed by the leading $r$ eigenvectors of $\bsbGamma^{1/2}\bsbY^\T \Proj_{\bsbX} \bsbY\bsbGamma^{1/2}$. See, e.g.,  \citet{reinsel1998} for a detailed     justification.
When $\bsbGamma= \bsbI$, we abbreviate $ \mathcal R(\bsbX, \bsbY, \bsbI, r)$ to $ \mathcal R(\bsbX, \bsbY,   r)$. The reduced-rank regression estimator is   denoted by  $\hat \bsbB(r)$ to emphasize its dependence on the regularization parameter.

Outliers are unavoidable in real   data.  We  define the finite-sample breakdown point for an arbitrary estimator $\hat \bsbB$, in the spirit of \cite{donoho1983bd}: given finite  data $(\bsbX, \bsbY, \bsbGamma)$  and an estimator   $\hat \bsbB(\bsbX, \bsbY, \bsbGamma)$, its breakdown point is
\begin{align*}
\epsilon^*(\hat\bsbB)= \frac{1}{n}\min\Big\{k\in \mathbb{N}\cup\{0\}: \sup_{\tilde \bsbY\in \mathbb R^{n\times m}: \|\tilde \bsbY - \bsbY \|_0 \leq k} \| \bsbX \hat\bsbB(\bsbX, \tilde\bsbY, \bsbGamma)\|_{\tF}=+\infty\Big\}.
\end{align*}

In addition to  the reduced-rank regression estimator $\hat\bsbB(r)$, we   take into account a general low-rank estimator obtained by imposing a singular value penalty
\begin{align}
\hat \bsbB(\lambda)\in \arg\min_{\bsbB} \frac{1}{2} \tr\{(\bsbY - \bsbX \bsbB) \bsbGamma (\bsbY - \bsbX \bsbB)^\T\} + \sum_{s=1}^{p\wedge m}P(\sigma_s^{ \bsbB \bsbGamma^{1/2} }; \lambda).\label{eq:grrr}
\end{align}
Here, $\lambda$ is a regularization parameter, and $\sigma_s^{ \bsbB \bsbGamma^{1/2}}$  denote the singular values of $\bsbB\bsbGamma^{1/2}$. The penalty $P$ is constructed from an arbitrary thresholding rule $\Theta(\cdot;\lambda)$    by
\begin{equation}
P(t;\lambda) - P(0;\lambda)=P_{\Theta}(t; \lambda) +q(t;\lambda), \ \  P_{\Theta}(t; \lambda)=\int^{|t|}_0 [\sup \{s:\Theta(s;\lambda) \le u\}-u]\rd u,
 \label{eq:construction}
\end{equation}
for some nonnegative  $q(\cdot; \lambda)$  satisfying $q\{\Theta(s;\lambda);\lambda\}=0$,   for all $  s\in \mathbb{R}$. 

\begin{theorem}\label{th:bp}
Given any finite  $(\bsbX, \bsbY, \bsbGamma)$ and $r\geq 1$ with $\bsbGamma$   positive definite and $\bsbX\neq \bsb{0}$, let  $\hat\bsbB(r)$ be a reduced-rank regression estimator which solves \eqref{rrr}. Then its finite-sample breakdown point  is exactly $1/n$. Furthermore, for any   $\hat \bsbB(\lambda)$ given by   \eqref{eq:grrr}, $\epsilon^*\{\hat \bsbB(\lambda)\}=1/n$ still holds for any finite value of $\lambda$.


\end{theorem}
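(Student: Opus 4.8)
The plan is to prove both claims by exhibiting a single contaminated entry that sends the fitted values $\|\bsbX\hat\bsbB\|_{\tF}$ to infinity, after first disposing of the trivial reverse bound $\epsilon^*\ge 1/n$: taking $k=0$ forces $\tilde\bsbY=\bsbY$, for which each estimator is well defined with a finite fit (for \eqref{eq:grrr} this follows by comparing the optimal value with that at $\bsbB=\bsb{0}$), so the minimizing $k$ in the definition of $\epsilon^*$ is at least one. Since $\bsbX\neq\bsb{0}$, the projection $\Proj_{\bsbX}$ is nonzero, so I fix an index $i$ with $\Proj_{\bsbX}\bsbu_i\neq\bsb{0}$, where $\bsbu_i\in\mathbb R^n$ and $\bsbv_j\in\mathbb R^m$ denote canonical basis vectors, and consider the one-entry perturbation $\tilde\bsbY=\bsbY+c\,\bsbu_i\bsbv_j^\T$, so $\|\tilde\bsbY-\bsbY\|_0\le 1$, letting $c\to\infty$. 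Throughout I use the splitting $\Proj_{\bsbX}\tilde\bsbY=\Proj_{\bsbX}\bsbY+c\,(\Proj_{\bsbX}\bsbu_i)\bsbv_j^\T$ and the invertibility of $\bsbGamma^{1/2}$.

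For the reduced-rank estimator I would invoke the closed form \eqref{rrr-sol}: only $\bsbY$ is perturbed, so $r(\bsbX)$ is unchanged and \eqref{rrr-sol} still applies, giving $\bsbX\hat\bsbB(r)\bsbGamma^{1/2}=\bsbA_c\Proj_{\bsbV}$ with $\bsbA_c:=\Proj_{\bsbX}\tilde\bsbY\bsbGamma^{1/2}$ and $\bsbV$ the leading $r$ right singular vectors of $\bsbA_c$, i.e.\ the best rank-$r$ approximation of $\bsbA_c$ in the Eckart--Young sense. Because $r\ge 1$, its Frobenius norm is at least $\sigma_1(\bsbA_c)=\|\bsbA_c\|_2$, and testing against $\bsbGamma^{-1/2}\bsbv_j$ gives $\|\bsbA_c\|_2\ge\|\Proj_{\bsbX}\bsbY\bsbv_j+c\,\Proj_{\bsbX}\bsbu_i\|_2/\|\bsbGamma^{-1/2}\bsbv_j\|_2$, which diverges since $\Proj_{\bsbX}\bsbu_i\neq\bsb{0}$. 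Dividing by $\|\bsbGamma^{1/2}\|_2$ yields $\|\bsbX\hat\bsbB(r)\|_{\tF}\to\infty$, so $\epsilon^*\{\hat\bsbB(r)\}=1/n$.

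For the penalized estimator \eqref{eq:grrr} there is no closed form, so I would argue by contradiction from a purely local perturbation. Suppose along some $c\to\infty$ a minimizer $\hat\bsbB$ has bounded fit $\|\bsbX\hat\bsbB\|_{\tF}\le K$. First I replace $\hat\bsbB$ by the minimum-norm preimage $\bsbB^\dagger=\bsbX^-\bsbX\hat\bsbB$, which has the same fit and norm $\le\|\bsbX^-\|_2K$; the identity $(\bsbB^\dagger+\bsbZ)^\T(\bsbB^\dagger+\bsbZ)=(\bsbB^\dagger)^\T\bsbB^\dagger+\bsbZ^\T\bsbZ$ for every $\bsbZ$ with $\bsbX\bsbZ=\bsb{0}$ (the cross term vanishes because the columns of $\bsbB^\dagger$ lie in the row space of $\bsbX$) shows $\sigma_s(\bsbB^\dagger\bsbGamma^{1/2})\le\sigma_s\{(\bsbB^\dagger+\bsbZ)\bsbGamma^{1/2}\}$ for all $s$, so $\bsbB^\dagger$ is penalty-minimal among all preimages of the fit and is itself a bounded minimizer. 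Writing $\bsbB_1=(\bsbX^\T\bsbX)^-\bsbX^\T\bsbu_i\bsbv_j^\T$, so $\bsbX\bsbB_1=(\Proj_{\bsbX}\bsbu_i)\bsbv_j^\T$, the loss is exactly quadratic along $\bsbB^\dagger+\delta\bsbB_1$ with first-order coefficient $-(\Proj_{\bsbX}\bsbu_i)^\T(\Proj_{\bsbX}\tilde\bsbY-\bsbX\bsbB^\dagger)\bsbGamma\bsbv_j=-c\,\bsbGamma_{jj}\|\Proj_{\bsbX}\bsbu_i\|_2^2+O(1)$, where $\bsbGamma_{jj}>0$ and the $O(1)$ uses the bounded fit. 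Since $\bsbB^\dagger$ and $\bsbB^\dagger+\delta\bsbB_1$ stay in a ball independent of $c$, and $P(\cdot;\lambda)$ is finite and nondecreasing, the change in the penalty is bounded uniformly in $c$; hence for fixed small $\delta$ and $c$ large the objective strictly decreases, contradicting minimality of $\bsbB^\dagger$. Thus no minimizer has bounded fit, every selection satisfies $\|\bsbX\hat\bsbB(\lambda)\|_{\tF}\to\infty$, and $\epsilon^*\{\hat\bsbB(\lambda)\}=1/n$.

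The lower bound and the reduced-rank case are routine. The delicate point is the penalized case with a \emph{general}, possibly nonconvex or fast-growing, penalty: the tempting global comparison against a coefficient that fully absorbs the outlier fails for quadratic (ridge-type) penalties admissible under Definition \ref{def:threshold}, since then both the optimum and the competitor have objectives of order $c^2$. The resolution is to perturb by a \emph{fixed} step from a bounded optimal representative, so that the $O(c)$ loss reduction overwhelms a penalty increment bounded uniformly in $c$; the enabling observation, which I expect to be the main obstacle to make airtight, is that the minimum-norm preimage is simultaneously penalty-minimal, which is what keeps the representative bounded whenever the fit is.
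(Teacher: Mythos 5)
Your lower bound and your treatment of the constrained estimator $\hat\bsbB(r)$ are correct and essentially what the paper does (the paper works out the penalized case in detail and remarks that the constrained case is parallel). For the penalized estimator the two arguments genuinely diverge. The paper first splits off the part of the loss not depending on $\bsbB$, reducing \eqref{eq:grrr} to the surrogate denoising problem $\min_{\bsbA}\frac12\|\bsbZ-\bsbA\|_{\tF}^2+\sum_s P(\sigma_s^{\bsbA};\lambda)$ with $\bsbZ=\Proj_{\bsbX}\bsbY\bsbGamma^{1/2}$; it then imports from \cite{She2013} the facts that the matrix thresholding $\Theta^{\sigma}(\bsbZ;\lambda)$ is a global minimizer of this surrogate and that every other minimizer lies within a nuclear-norm ball of radius depending only on $\lambda$ around it. Breakdown then follows directly from Definition~\ref{def:threshold}(iii): one contaminated entry drives $\|\bsbZ\|_{\tF}$, hence the top singular value, hence $\Theta(\sigma_1;\lambda)$, to infinity. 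Your route replaces this machinery by an elementary variational argument: if some minimizer had a bounded fit, a fixed step along $\bsbB_1=(\bsbX^\T\bsbX)^-\bsbX^\T\bsbu_i\bsbv_j^\T$ would lower the quadratic loss by an amount of order $c$ while changing the penalty by $O(1)$. This buys you independence from the exact characterization of the surrogate's minimizers; the paper's route, in exchange, never needs to exhibit a bounded representative and handles non-uniqueness of the minimizer for free.

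The one step you should tighten is the claim that the minimum-norm preimage $\bsbB^{\dagger}=\bsbX^-\bsbX\hat\bsbB$ is penalty-minimal, hence itself a global minimizer. Your singular-value domination $\sigma_s(\bsbB^{\dagger}\bsbGamma^{1/2})\le\sigma_s\{(\bsbB^{\dagger}+\bsbZ)\bsbGamma^{1/2}\}$ is correct, but converting it into a comparison of penalties requires $P(\cdot;\lambda)$ to be nondecreasing in $|t|$. The construction \eqref{eq:construction} guarantees this for $P_{\Theta}$ (its integrand is nonnegative because $\Theta(s;\lambda)\le s$ implies $\sup\{s:\Theta(s;\lambda)\le u\}\ge u$), but the slack term $q$ is only required to be nonnegative and to vanish on the range of $\Theta$, so $P=P_{\Theta}+q$ need not be monotone when $\Theta$ has jumps. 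You should either add the hypothesis that $P$ is nondecreasing (harmless, and satisfied by every penalty the paper names) or patch the remaining case: when $P$ is bounded you can perturb $\hat\bsbB$ itself, since the penalty change under a fixed step is then at most $(p\wedge m)\sup P$ no matter how large $\hat\bsbB$ is; only an unbounded, non-monotone $q$ --- a genuinely pathological choice in \eqref{eq:construction} --- escapes both fixes. Likewise, the finiteness of $P$ that you invoke does follow for $P_{\Theta}$ from Definition~\ref{def:threshold}(iii), but must be assumed for $q$.
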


The result indicates that   a single outlier can completely ruin   low-rank matrix estimation, whether one applies a  rank constraint  or,    say, a         Schatten $p$-norm penalty. 
The conclusion   limits the use of ordinary rank reduction  in big data applications. Because with the  low-rank constraint,  directly   applying a robust  loss function, as in \eqref{rob-rhorow}, may result in nontrivial computational  and  theoretical challenges, we will apply a novel  additive robustification, motivated by \cite{she2011a}.

\subsection{The  additive framework  }

We introduce a multivariate mean-shift regression model to explicitly encompass   outliers,
\begin{equation}
\bsbY = \bsbX\bsbB^* + \bsbC^* + \bsbE, \label{model2}
\end{equation}
where $\bsbB^* \in \mathbb{R}^{p\times m}$ gives the matrix of  coefficients, $\bsbC^{*} \in \mathbb{R}^{n\times m}$ describes  the outlying effects on $\bsbY$, and $\bsbE  \in\mathbb{R}^{n\times m}$ has   independently and identically distributed rows following  $N(\bsb0,\bsbSig)$. Obviously, this leads to an over-parameterized model, so we   must  regularize the unknown matrices appropriately. We assume that $\bsbB^*$ has low rank and $\bsbC^*$ is a sparse matrix with only a few  nonzeros because  outliers  are inconsistent with the majority of the data.
Given a positive definite weighting matrix $\bsbGamma$, we propose the   robust reduced-rank regression problem
\begin{align}
\min_{\bsbB, \bsbC} \  \frac{1}{2} \tr\{(\bsbY - \bsbX \bsbB - \bsbC) \bsbGamma (\bsbY - \bsbX \bsbB - \bsbC)^\T\}  + P(\bsbC;\lambda) \quad  \mbox{subject to} \  \ r(\bsbB) \leq r\mbox{.} \label{eq:r4Gamma}
\end{align}
Here, $P(\cdot; \lambda)$ is a sparsity-promoting penalty function with $\lambda$ to adjust   the amount of shrinkage, but it can also be a  constraint, such as \eqref{eq:constrR4}. The following form of $P$    can handle element-wise outliers
\begin{align}
P(\bsbC;\lambda)=\sum_{i=1}^n \sum_{k=1}^m P(|c_{i,k}|;\lambda),\label{elemwise}
\end{align}
which was used in  the  stock return analysis.   It is   more common in robust   statistics  to assume   outlying samples, or {outlying rows} in $(\bsbY,\bsbX)$,  which corresponds
to\begin{align}
P(\bsbC;\lambda) = \sum_{i=1}^n P(\|\bsbc_i\|_2; \lambda), \label{rowwise}
\end{align}
where $\bsbc_i^\T$ is the $i$th row vector of $\bsbC$. Unless otherwise specified, we consider row-wise outliers. But all our algorithms and analyses  after simple modification  can handle  element-wise outliers.


In the literature on reduced-rank regression, it is common to regard the weighting matrix $\bsbGamma$ as known \citep{reinsel1998,  yuan2007, izenman2008}. The choice of $\bsbGamma$ is flexible and is usually based on  a pilot covariance estimate  $\hat{\bsbSig}$. For example, it can be   $ \hat{\bsbSig}^{-1}$ when $\hat{\bsbSig}$ is nonsingular, or a regularized version $ (\hat{\bsbSig} + \delta\bsbI )^{-1}$ for some $\delta >0$. Although it   sounds intriguing to consider     jointly estimating    the high-dimensional   mean   and the even higher-dimensional covariance matrix in the presence of outliers, this  is beyond the scope of this paper.     When a reliable estimate of $\bsbSig$ is unavailable, a standard practice   in finance and econometric forecasting is   to reduce $\bsbGamma$ to a diagonal matrix, or equivalently, an identity matrix after robustly scaling the response variables.  
For ease of presentation, we take $\bsbGamma$ as the identity matrix unless otherwise noted, and mainly focus on the following robust reduced-rank regression criterion,
\begin{align}
\min_{\bsbB, \bsbC} \  \frac{1}{2}\|\bsbY - \bsbX \bsbB - \bsbC\|_{\tF}^2  + P(\bsbC;\lambda) \qquad  \mbox{subject to}  \ \ r(\bsbB) \leq r\mbox{.}\label{eq:r4}
\end{align}

We  show that the proposed  additive  outlier characterization   indeed comes with a robust guarantee, and interestingly, it generalizes    M-estimation    to the multivariate rank-deficient setting.
 We write $\bsbY= (\bsby_1, \ldots, \bsby_n)^\T$ and  $\bsbC = (\bsbc_1, \ldots, \bsbc_n)^\T$.

\begin{theorem}\label{th:rob}
(i) Suppose $\Theta(\cdot;\lambda)$ is an arbitrary {thresholding rule} satisfying    Definition \ref{def:threshold}, and let $P$  be any penalty associated with $\Theta$ through \eqref{eq:construction}.
Consider \begin{align}
\min_{\bsbB,\bsbC}\frac{1}{2} \|\bsbY - \bsbX \bsbB - \bsbC\|_{\tF}^2  + \sum_{i=1}^n P( \|\bsbc_i \|_2;\lambda) \ \ \mbox{subject to}  \ r(\bsbB) \leq r\mbox{.}  \label{eq:penR4}
\end{align}
For any fixed $\bsbB$, a globally optimal solution for $\bsbC$ is $\bsbC(\bsbB)=\vec\Theta(\bsbY - \bsbX \bsbB;\lambda)$. By profiling out $\bsbC$ with $\bsbC(\bsbB)$, \eqref{eq:penR4} can be expressed as an optimization problem with respect to $\bsbB$ only, and it is equivalent to the robust M-estimation problem
\begin{align}
\min_{\bsbB}\sum_{i=1}^n \rho( \| \bsby_i - \bsbB^\T\bsbx_i \|_2; \lambda) \  \ \mbox{subject to}  \ r(\bsbB) \leq r,\label{eq:penrob}
\end{align}
where the robust loss function $\rho$ is given by  $$\rho(t;\lambda) = \int_0^{|t|} \psi(u;\lambda) \rd u, \quad \psi(t;\lambda) = t - \Theta(t;\lambda)\mbox{.}$$

(ii) Given $\outl\in \{0, 1, \ldots, n\}$, consider
\begin{align}
\min_{\bsbB,\bsbC}\frac{1}{2} \|\bsbY - \bsbX \bsbB - \bsbC \|_{\tF}^2   \  \ \mbox{subject to}  \ r(\bsbB) \leq r, \|\bsbC\|_{2,0} \leq \outl\mbox{.} \label{eq:constrR4}
\end{align}
 Similarly,         \eqref{eq:constrR4}, after   profiling out $\bsbC$,  can be expressed as    an optimization problem with respect to $\bsbB$ only, and  is equivalent to the rank-constrained trimmed least squares problem
\begin{align}
\min_{\bsbB}\frac{1}{2} \sum_{i=1}^{n-\outl} r_{(i)}    \quad  \mbox{subject to} \  \  r(\bsbB) \leq r, r_i = \|\bsby_i - \bsbB^\T\bsbx_i \|_2, \label{eq:constrrob}
\end{align}
where $r_{(1)}, \ldots, r_{(n)}$ are the order statistics of $r_1, \ldots, r_n$ satisfying $| r_{(1)} | \le \cdots \le |r_{(n)}|$.
\end{theorem}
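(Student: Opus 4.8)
The plan is to establish both parts by partially minimizing over $\bsbC$, which decouples across the rows of $\bsbC$ once $\bsbB$ is held fixed. Writing $\bsbr_i = \bsby_i - \bsbB^\T\bsbx_i$ so that $\bsbY-\bsbX\bsbB$ has rows $\bsbr_i^\T$, the penalized objective in \eqref{eq:penR4} becomes $\sum_{i=1}^n\{\tfrac12\|\bsbr_i-\bsbc_i\|_2^2 + P(\|\bsbc_i\|_2;\lambda)\}$ and the constrained objective in \eqref{eq:constrR4} becomes $\tfrac12\sum_{i=1}^n\|\bsbr_i-\bsbc_i\|_2^2$ under a budget on the number of nonzero $\bsbc_i$. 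Because the rank constraint acts on $\bsbB$ alone, minimizing over $\bsbC$ splits into $n$ independent vector problems, and the joint infimum over $(\bsbB,\bsbC)$ equals the infimum over feasible $\bsbB$ of the resulting profiled objective, with an optimal $\bsbC$ read off row by row; this is the sense in which the reduced problems are equivalent.

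For part (i) I would first collapse each vector subproblem $\min_{\bsbc}\tfrac12\|\bsbr-\bsbc\|_2^2 + P(\|\bsbc\|_2;\lambda)$ to a scalar one. Writing $\bsbc=\tau\bsbu$ with $\tau=\|\bsbc\|_2\ge 0$ and $\|\bsbu\|_2=1$, the objective equals $\tfrac12\|\bsbr\|_2^2-\tau\,\bsbu^\T\bsbr+\tfrac12\tau^2+P(\tau;\lambda)$, which for each fixed $\tau\ge0$ is minimized by aligning $\bsbu$ with $\bsbr$, i.e. $\bsbu=\bsbr/\|\bsbr\|_2$ when $\bsbr\ne\bsb0$. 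This leaves $\min_{\tau\ge0}\{\tfrac12(\|\bsbr\|_2-\tau)^2+P(\tau;\lambda)\}$, to which I would apply the scalar thresholding--penalty correspondence built into \eqref{eq:construction} (following \citet{she2011a}): the global minimizer is $\tau=\Theta(\|\bsbr\|_2;\lambda)$, so the optimal $\bsbc$ is $\Theta(\|\bsbr\|_2;\lambda)\,\bsbr/\|\bsbr\|_2=\vec\Theta(\bsbr;\lambda)$ by Definition \ref{def:thresholdmulti}, with the degenerate case $\bsbr=\bsb0$ giving $\bsbc=\bsb0$. Collecting the rows yields $\bsbC(\bsbB)=\vec\Theta(\bsbY-\bsbX\bsbB;\lambda)$.

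To identify the profiled loss I would set $Q(y)=\min_{\tau\ge0}\{\tfrac12(y-\tau)^2+P(\tau;\lambda)\}$ for $y\ge0$. Since $\tau=\Theta(y;\lambda)$ attains the minimum, the envelope theorem gives $Q'(y)=y-\Theta(y;\lambda)=\psi(y;\lambda)$, while $\Theta(0;\lambda)=0$ (oddness of $\Theta$) forces $Q(0)=P(0;\lambda)$. Integrating, $Q(y)=\int_0^y\psi(u;\lambda)\rd u+P(0;\lambda)=\rho(y;\lambda)+P(0;\lambda)$. Summing over $i$, the profiled objective is $\sum_{i=1}^n\rho(\|\bsby_i-\bsbB^\T\bsbx_i\|_2;\lambda)+nP(0;\lambda)$, and discarding the $\bsbB$-free constant shows \eqref{eq:penR4} is equivalent to the M-estimation problem \eqref{eq:penrob}.

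Part (ii) is combinatorial: for fixed $\bsbB$, any row permitted to be nonzero is optimally set to $\bsbc_i=\bsbr_i$ and contributes $0$, whereas a row forced to $\bsbc_i=\bsb0$ contributes $\tfrac12\|\bsbr_i\|_2^2$; under the budget $\|\bsbC\|_{2,0}\le\outl$ the best choice spends the budget on the $\outl$ rows of largest residual norm, leaving the sum of the $n-\outl$ smallest squared residuals $\tfrac12\sum_{i=1}^{n-\outl}r_{(i)}^2$, i.e. the rank-constrained trimmed least squares problem \eqref{eq:constrrob}. The main obstacle is the scalar correspondence invoked in part (i): establishing that $\tau=\Theta(y;\lambda)$ is a \emph{global} minimizer when $P$ may be nonconvex, and that $Q(y)=\rho(y;\lambda)+P(0;\lambda)$ when $\Theta$ is only nondecreasing, so that $Q$ can have kinks and the minimizer need not be unique. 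This is exactly where the extra term $q$ in \eqref{eq:construction}, satisfying $q\{\Theta(s;\lambda);\lambda\}=0$, does its work: it can inflate the penalty away from attainable thresholded values without disturbing the value at $\tau=\Theta(y;\lambda)$, thereby preserving global optimality. The remaining points, namely the a.e. differentiability of $Q$ and the $\bsbr=\bsb0$ degeneracy, are routine.
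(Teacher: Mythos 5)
Your proposal is correct and shares the paper's overall architecture: for fixed $\bsbB$ the $\bsbC$-minimization decouples across rows, the row-wise optimum is $\vec\Theta(\bsby_i-\bsbB^\T\bsbx_i;\lambda)$, and the profiled objective is identified with $\sum_i\rho(\|\bsby_i-\bsbB^\T\bsbx_i\|_2;\lambda)$ up to the additive constant $nP(0;\lambda)$. You differ in the two technical ingredients. First, for the optimality of the multivariate threshold the paper simply invokes a cited result (Lemma \ref{uniqsol-gen-grp}, attributed to \cite{She2012}), whereas you unpack it: reduce the vector subproblem to a scalar one via the alignment $\bsbc=\tau\,\bsbr/\|\bsbr\|_2$ and then invoke the scalar $\Theta$--$P$ correspondence. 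That is a legitimate expansion of the same cited fact, and you correctly flag that the \emph{global} optimality of $\tau=\Theta(y;\lambda)$ for possibly nonconvex $P$ is the one step that genuinely rests on the construction \eqref{eq:construction} and the role of $q$. Second, to evaluate the profiled loss the paper proves the pointwise identity $\frac{1}{2}\{y-\Theta(y;\lambda)\}^2+P_{\Theta}\{\Theta(y;\lambda);\lambda\}=\int_0^{y}\psi(u;\lambda)\rd u$ directly by interchanging the order of integration (Lemma \ref{thresh-identity}), while you differentiate the value function via the envelope theorem and integrate $Q'=\psi$. The envelope route is slicker but owes one word of justification: $Q(y)=\frac{1}{2}y^2-\sup_{\tau\ge0}\{y\tau-\frac{1}{2}\tau^2-P(\tau;\lambda)\}$ is a smooth function minus a convex conjugate, hence locally Lipschitz and equal to the integral of its a.e.\ derivative, and the minimizer equals $\Theta(y;\lambda)$ at a.e.\ $y$ since $\Theta$ is monotone with at most countably many discontinuities; with that noted, your ``routine'' claim is indeed routine, and the two derivations buy the same identity (the paper's being more elementary and valid pointwise rather than a.e.). Your part (ii) is also correct and is in fact more explicit than the paper, which defers to a quantile-thresholding lemma; your derivation makes clear that the trimmed objective should read $\frac{1}{2}\sum_{i=1}^{n-\outl}r_{(i)}^2$, i.e., the display \eqref{eq:constrrob} carries a typographical omission of the square.
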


\remark \label{remRob1} \upshape
Theorem \ref{th:rob} connects   $P$ to $\rho$ through    $\Theta$. As is well known, changing the squared error loss  to  a robust loss amounts to  designing a set of  {multiplicative}  weights for $\bsby_i - \bsbB^\T \bsbx_i$ ($i=1,\ldots, n$). Our   additive robustification   achieves the   same robustness, but leaves the original loss function untouched. The  connection is also valid in the case of  element-wise outliers,  with  $P$ and $\rho$  applied in an element-wise manner. In fact,    the identity built  in Lemma \ref{thresh-identity} in the Appendices,
$$
\frac{1}{2} \{r - \Theta(r; \lambda)\}^2 + P_{\Theta}\{\Theta(r; \lambda); \lambda\}  = \int_0^{|r|} \psi(t; \lambda) \rd t, \quad r\in \mathbb R,
$$
 implies that  the equivalence   holds  much more  generally,      with $\bsbB$ subject to an arbitrary    constraint or penalty, and   regardless of the number of  response variables and the number of predictors. This extends the main result in \cite{she2011a} to multiple-response models with $p$ possibly larger than $n$. 

\remark \label{remRob2} \upshape
Theorem \ref{th:rob} holds for {all} thresholding rules, and     popularly-used convex and nonconvex penalties   are all covered by  \eqref{eq:construction}.
  For example,  the convex  group $\ell_1$  penalty $\lambda \sum \|\bsbc_i \|_2$ is associated with the soft-thresholding $\Theta_S(s;\lambda)=\mbox{sgn}(s) (|s|-\lambda)_+$.   The group $\ell_0$ penalty  $({\lambda^2}/2)\sum_{i=1}^n  1_{\|\bsbc_i\|_2\neq 0}$  can be obtained from  \eqref{eq:construction} with the hard-thresholding ${\Theta}_H(s;\lambda)= s 1_{|s|>\lambda}$, and  $q(t;\lambda)= 0.5 (\lambda - |t|)^2 1_{0<|t|<\lambda}$.
Our $\Theta$-$P$ coupling framework also covers
    $\ell_{p}$ ($0<p<1$), the smoothly clipped absolute deviation penalty \citep{fan2001}, the minimax concave penalty    \citep{zhang2010},   and the capped $\ell_1$ \citep{Zhang2010capped} as particular instances; see   \cite{She2012}. 


\remark \label{remRob3} \upshape
The universal link between  \eqref{eq:penR4} and \eqref{eq:penrob}    provides   insight  into the choice  of regularization. It is easy to verify that  the      $\ell_1$-norm penalty  as  commonly used in variable selection   leads  to   Huber's loss, which   is   prone to masking and swamping   and may   fail with even  moderately  leveraged outliers occurring.  To  handle gross outliers, redescending $\psi$ functions are often advocated,  which   amounts to using nonconvex penalties in \eqref{eq:penR4}. For example,  Hampel's three-part $\psi$ \citep{Hampel1986} can be shown to give Fan and Li's   smoothly clipped absolute deviation penalty, the skipped mean $\psi$ corresponds to the exact $\ell_0$ penalty, and     rank constrained least trimmed squares   can be rephrased as the $\ell_0$-constrained form as in \eqref{eq:constrR4}. Our approach  not only provides a unified way to robustify low-rank matrix estimation, but facilitates theoretical analysis and  computation of reduced-rank M-estimators in high dimensions. 




\subsection{Connections and extensions}

Before we dive into   theoretical study, it is worth pointing out some     connections and extensions of the proposed  framework. 
First, one can  set  $\bsbGamma$  equal to the inverse covariance matrix of the   response variables to perform   robust  canonical correlation analysis; see  \cite{reinsel1998}. Although we mainly focus on the rank-constrained form, there is no difficulty in extending our discussion to
\begin{align}
\min_{\bsbB, \bsbC}  & \frac{1}{2}\|\bsbY - \bsbX \bsbB - \bsbC\|_{\tF}^2 + \sum_{s=1}^{p\wedge m}  P_B(\sigma_s^{\bsbB};  \lambda_B) + P_C(\bsbC;\lambda_C),
\label{doubpenrquad}
\end{align}
where $\sigma_s^{\bsbB}$ denote the singular values of $\bsbB$, and    $ P_B$ and $P_C$ are sparsity-inducing penalties.

Our   robust reduced-rank regression subsumes   a special but important case,  $\bsbY = \bsbB + \bsbC + \bsbE$. This problem is perhaps less challenging than its supervised counterpart, but has wide applications in computer vision and machine learning  \citep{Wright2009,candes2011}.

Finally,       our method   can be extended to reduced-rank    generalized linear models; see, e.g.,  \citet{yee2003} and \cite{She2013} 
for some computational details. In these scenarios,   directly robustifying the  loss      can be messy, but   a sparse    outlier  term can always be   introduced without altering  the form of the given loss, so that many algorithms designed for fitting ordinary generalized linear models can be seamlessly applied.   


%
%


%




\section{Nonasymptotic Robust Analysis}\label{sec:theory}

Theorem \ref{th:rob} provides robustness and    some helpful intuition for the proposed method, but it might not be enough from a theoretical point of view. For example, can one   justify  the need for     robustification in estimating a matrix of low rank? Is using redescending $\psi$ functions still preferable in rank-deficient settings? Different from     traditional robust analysis, we cannot assume  an infinite sample size and  a  fixed number of predictors or response variables, because $p$ and/or $m$ can be much larger than $n$ in modern   applications. Conducting nonasymptotic robust analysis would be  desirable. The  finite-sample results  in this section  contribute to this type of robust analysis.


For simplicity we assume that the model is given by $\bsbY = \bsbX \bsbB^* + \bsbC^* + \bsbE$, where $\bsbE$ has independent and identically distributed      $   N(0, \sigma^2)$ entries,  and  consider the robust reduced-rank regression problem defined in \eqref{eq:r4}.
The noise distribution can be more general. For example, in all the following theorems except Theorem \ref{th_minimax}, $\bsbE$ can be sub-Gaussian. Given an estimator     $(\hat \bsbB, \hat \bsbC)$, we   focus on  its prediction accuracy measured by $M(\hat{\bsbB}-\bsbB^*,\hat{\bsbC}-\bsbC^*)$, where
\begin{equation}
  M(\bsbB,\bsbC)=\|\bsbX\bsbB+\bsbC\|_{\tF}^2\mbox{.}
\end{equation}
This predictive learning perspective is always legitimate  in evaluating the performance of an estimator, and requires no      signal strength or model uniqueness assumptions. The $\ell_2$-recovery  of $M(\hat{\bsbB}-\bsbB^*,\hat{\bsbC}-\bsbC^*)$ is fundamental, and such a bound, together with additional regularity assumptions,  can be easily adapted to obtain  estimation error bounds in different norms as well as selection consistency   \citep{ye2010,lounici2011}; see Theorem \ref{th_est}  in the Appendices for instance. Given a penalty function $P$, or equivalently, a robust loss $\rho$, we will study the performance of the set of global minimizers to show the ultimate power of the associated method. But our proof techniques apply more generally; see, e.g., Theorem \ref{compconv}. 

For any $\bsbC= ( \bsbc_1, \ldots, \bsbc_n)^\T$, define
\begin{align}\mathcal J(\bsbC)= \{i: \bsb{c}_i \neq \bsb{0}\}, \qquad  J(\bsbC)=|\mathcal J(\bsbC)|=\|\bsbC\|_{2,0}\mbox{.}
\end{align}
We use $r^* = r(\bsbB^*)$ to denote  the rank   of the true coefficient matrix, and $J^* = J(\bsbC^*)$ to denote the number of nonzero rows in $\bsbC^*$, i.e., the number of outliers.
Let $q = r(\bsbX)$.

To address   problems in arbitrary  dimensions, we build some finite-sample oracle inequalities \citep{donoho1994}.
The first  theorem considers a general penalty $P(\bsbC;\lambda)= \sum_{i=1}^n P(\| \bsbc_i\|_2; \lambda)$. Here, we assume that    $P(\cdot; \lambda)$  takes  $\lambda$ as the threshold parameter, and satisfies
\begin{align}
P(0; \lambda)= 0, \,\mbox{   } \,
P(t; \lambda) \geq P_H(t; \lambda),\label{cond:pen}
\end{align}
where $P_H(t; \lambda)= (-t^2/2+\lambda |t|)1_{|t|<\lambda} +(\lambda^2/2) 1_{|t|\geq \lambda}$. The latter inequality   is natural in view of \eqref{eq:construction}, because a shrinkage  estimator  with $\lambda$ as the threshold  is always bounded above by the hard-thresholding function $\Theta_H(\cdot, \lambda)$.   From Theorem \ref{th:rob}, \eqref{cond:pen} covers all $\psi$-functions bounded below by the skipped mean $\psi_H(s;\lambda)=s 1_{|s|\leq \lambda}$ for any $s\geq 0$.

\begin{theorem}\label{th_oracle}
Let $\lambda =A\sigma (m+\log n)^{1/2}$ with $A$   a  constant and  let    $(\hat\bsbB, \hat\bsbC)$ be  a global minimizer of \eqref{eq:r4}. Then, for any sufficiently large $A$, the following  oracle  inequality holds for  any $(\bsbB, \bsbC)\in \mathbb R^{p\times m}\times \mathbb R^{n\times m}$ satisfying $r(\bsbB) \leq r$:
\begin{align}
 \EE \{ M(\hat \bsbB -\bsbB^*,  \hat \bsbC - \bsbC^* ) \}\lesssim    M( \bsbB -\bsbB^*, \bsbC - \bsbC^*)
 + \sigma^2 (q+ m) r + P(\bsbC; \lambda)+\sigma^{2},\label{genoracle}
\end{align}
where   $\lesssim$ means the inequality holds up to a multiplicative  constant.

\end{theorem}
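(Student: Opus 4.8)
The plan is to run the standard basic-inequality argument for penalized M-estimation, with the extra bookkeeping needed to keep the low-rank and row-sparse contributions apart. Write $\Delta = \bsbX(\hat\bsbB-\bsbB^*)+(\hat\bsbC-\bsbC^*)$ and $\delta = \bsbX(\bsbB-\bsbB^*)+(\bsbC-\bsbC^*)$, so that $M(\hat\bsbB-\bsbB^*,\hat\bsbC-\bsbC^*)=\|\Delta\|_{\tF}^2$ and $M(\bsbB-\bsbB^*,\bsbC-\bsbC^*)=\|\delta\|_{\tF}^2$. Since $(\bsbB,\bsbC)$ is feasible for \eqref{eq:r4}, I would invoke optimality of the global minimizer, substitute $\bsbY=\bsbX\bsbB^*+\bsbC^*+\bsbE$, note $\bsbY-\bsbX\hat\bsbB-\hat\bsbC=\bsbE-\Delta$ and $\bsbY-\bsbX\bsbB-\bsbC=\bsbE-\delta$, expand both squares and cancel the common $\tfrac12\|\bsbE\|_{\tF}^2$ to reach
$$\tfrac12\|\Delta\|_{\tF}^2 + P(\hat\bsbC;\lambda) \le \tfrac12\|\delta\|_{\tF}^2 + P(\bsbC;\lambda) + \langle\bsbE,\ \Delta-\delta\rangle,$$
with $\Delta-\delta=\bsbX(\hat\bsbB-\bsbB)+(\hat\bsbC-\bsbC)$. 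Everything now reduces to controlling the Gaussian cross term $\langle\bsbE,\Delta-\delta\rangle$.

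The next step is to split this into a low-rank part $\langle\bsbE,\bsbX(\hat\bsbB-\bsbB)\rangle$ and a row-sparse part $\langle\bsbE,\hat\bsbC-\bsbC\rangle$, and bound each by concentration. For the low-rank part, $\bsbX(\hat\bsbB-\bsbB)$ lies in the column space of $\bsbX$ and has rank at most $2r$, so
$$\langle\bsbE,\ \bsbX(\hat\bsbB-\bsbB)\rangle = \langle\Proj_{\bsbX}\bsbE,\ \bsbX(\hat\bsbB-\bsbB)\rangle \le \|\Proj_{\bsbX}\bsbE\|_2\,(2r)^{1/2}\,\|\bsbX(\hat\bsbB-\bsbB)\|_{\tF},$$
where $\Proj_{\bsbX}\bsbE$ equals a $q\times m$ matrix of i.i.d. $N(0,\sigma^2)$ entries in an orthonormal basis, hence $\|\Proj_{\bsbX}\bsbE\|_2\lesssim\sigma(q^{1/2}+m^{1/2})$ with high probability; this is exactly what produces the $\sigma^2(q+m)r$ term. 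For the row-sparse part I would write $\langle\bsbE,\hat\bsbC-\bsbC\rangle=\sum_i\langle\be_i,\hat\bsbc_i-\bsbc_i\rangle\le\sum_i\|\be_i\|_2\|\hat\bsbc_i-\bsbc_i\|_2$ and use that $\max_i\|\be_i\|_2\lesssim\sigma(m+\log n)^{1/2}\asymp\lambda/A$ simultaneously over all $n$ rows with high probability, which is precisely the calibration encoded in $\lambda=A\sigma(m+\log n)^{1/2}$.

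To finish, I would use the penalty lower bound $P(\cdot;\lambda)\ge P_H(\cdot;\lambda)$: because $P_H$ charges $\lambda^2/2$ for every large nonzero row, the term $P(\hat\bsbC;\lambda)$ on the left dominates both $\lambda$ times the row norms of $\hat\bsbC$ and the random cardinality $J(\hat\bsbC)$, so once $A$ is large enough the row-sparse cross term is absorbed into $P(\hat\bsbC;\lambda)+P(\bsbC;\lambda)$. The remaining Frobenius factors $\|\bsbX(\hat\bsbB-\bsbB)\|_{\tF}$ and $\|\hat\bsbc_i-\bsbc_i\|_2$, each controlled by $\|\Delta\|_{\tF}+\|\delta\|_{\tF}$ up to projection contractions, are cleared by repeated Cauchy--Schwarz and AM--GM, pushing half of $\|\Delta\|_{\tF}^2$ to the left and relegating the $(q+m)r$ and $\lambda$ complexities to the additive terms on the right. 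The high-probability statement is then converted to the bound in expectation by integrating the Gaussian tails, and the residual $\sigma^2$ absorbs the contribution of the rare event on which the concentration estimates fail.

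The step I expect to be hardest is the data-dependent support $\mathcal J(\hat\bsbC)$ in the row-sparse term: unlike a fixed-design sparse problem one cannot condition on a known support, so a uniform control over all possible supports is needed, and this is exactly where the $\log n$ in $\lambda$ enters, via the per-row bound on $\max_i\|\be_i\|_2$ together with the counting supplied by $P_H$. A secondary difficulty is that the low-rank and row-sparse components of $\Delta-\delta$ are not orthogonal, so the two pieces of the cross term cannot be decoupled by Pythagoras; one must instead bound each Frobenius factor by $\|\Delta\|_{\tF}+\|\delta\|_{\tF}$ and rely on the slack in AM--GM, with a small enough coefficient, to keep the $\tfrac12\|\Delta\|_{\tF}^2$ term on the left from being overwhelmed.
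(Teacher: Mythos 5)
Your skeleton (basic inequality, split the cross term, concentrate, absorb) matches the paper's, and your treatment of the low-rank piece via $\langle\bsbE,\bsbX(\hat\bsbB-\bsbB)\rangle\le\|\Proj_{\bsbX}\bsbE\|_2(2r)^{1/2}\|\bsbX(\hat\bsbB-\bsbB)\|_{\tF}$ is a standard, valid alternative to the paper's covering-type bound (Lemma 5) and yields the same $\sigma^2(q+m)r$ rate. The row-sparse piece, however, has a genuine gap. You bound $\langle\bsbE,\hat\bsbC-\bsbC\rangle\le\max_i\|\be_i\|_2\,\|\hat\bsbC-\bsbC\|_{2,1}\lesssim(\lambda/A)\|\hat\bsbC-\bsbC\|_{2,1}$ and then claim this is absorbed because ``$P(\hat\bsbC;\lambda)$ dominates $\lambda$ times the row norms of $\hat\bsbC$.'' That is false for exactly the penalties the theorem targets: the only hypothesis is $P\ge P_H$, and $P_H$ charges at most $\lambda^2/2$ per row, so for the bounded penalties of Corollary 1 one has $P(\hat\bsbC;\lambda)\lesssim\lambda^2 J(\hat\bsbC)$ while $\lambda\|\hat\bsbC\|_{2,1}$ is unbounded --- gross outliers have $\|\hat\bsbc_i\|_2\gg\lambda$, which is the whole point of the theorem. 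Your absorption is the group-$\ell_1$ argument; it is what the paper uses for Theorem 6, and even there an incoherence condition on $\bsbX$ is needed, whereas Theorem 3 assumes nothing about $\bsbX$. The correct accounting (the paper's Lemma 3) pays only $O(\sigma^2\{m+\log(en/J)\})$ per nonzero row: apply Cauchy--Schwarz at the level of the restricted submatrix, $\langle\bsbE,\bsbDelta^C\rangle\le\|\bsbE_{\mathcal J}\|_{\tF}\|\bsbDelta^C\|_{\tF}$, absorb $\tfrac1a\|\bsbDelta^C\|_{\tF}^2$ into the quadratic term, control $\sup_{|\mathcal J|=J}\|\bsbE_{\mathcal J}\|_{\tF}^2\lesssim\sigma^2\{Jm+\log\binom{n}{J}\}$ uniformly, and only then match the resulting $\lambda^2 J$ against $P_{2,H}$ --- which additionally requires knowing that the relevant $\bsbC$'s have nonzero rows of norm at least $\lambda$ (the paper restricts the supremum to minimizers of an auxiliary problem and invokes its Lemma 4 for this).

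A second, related gap: since $\bsbX\bsbDelta^B$ and $\bsbDelta^C$ are not orthogonal and no condition is placed on $\bsbX$, neither $\|\bsbX\bsbDelta^B\|_{\tF}$ nor $\|\bsbDelta^C\|_{\tF}$ is controlled by $\|\bsbX\bsbDelta^B+\bsbDelta^C\|_{\tF}$; when $p\ge n$ the two can be arbitrarily large while nearly cancelling. So the step where you ``bound each Frobenius factor by $\|\Delta\|_{\tF}+\|\delta\|_{\tF}$'' fails, and AM--GM has nothing to push against. The paper's fix is to decompose $\bsbX\bsbB+\bsbC$ as $\Proj^{\perp}_{\bsbI_{\mathcal J}}\bsbX\bsbB+\Proj_{\bsbI_{\mathcal J}}(\bsbX\bsbB+\bsbC)$ with $\mathcal J=\mathcal J(\bsbC)$: the two pieces live on complementary row sets, hence are orthogonal and their squared Frobenius norms sum to $\|\bsbX\bsbB+\bsbC\|_{\tF}^2$; the first still has rank at most $2r$ and the second has at most $J$ nonzero rows. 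Redoing your two cross-term bounds with respect to this decomposition lets both absorptions go through against the single quadratic $\tfrac1a\|\bsbX\bsbDelta^B+\bsbDelta^C\|_{\tF}^2$, with no assumption on $\bsbX$.
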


\begin{corollary} \label{l0oracle}
 Under the same conditions of Theorem \ref{th_oracle},   if $r\ge 1$ and  $P$ is   a {bounded nonconvex} penalty  satisfying   $P(t;\lambda) \lesssim   \lambda^2$  for any $t \in \mathbb R$, we have    \begin{align}
\begin{split}
 &\,\,\ \EE \{ M(\hat \bsbB -\bsbB^*,  \hat \bsbC - \bsbC^* )\}\lesssim       \\  &  \ \ \inf_{(\bsbB, \bsbC): r(\bsbB)\leq r}\{ M( \bsbB -\bsbB^*, \bsbC - \bsbC^*)
 + \sigma^2 (q+ m) r + \sigma^2 J(\bsbC) m + \sigma^2 J(\bsbC) \log n \}\mbox{.}
\end{split}\label{rateoracle}
\end{align}
\end{corollary}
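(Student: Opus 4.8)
The plan is to obtain Corollary \ref{l0oracle} directly from the general oracle inequality \eqref{genoracle} by controlling the penalty term $P(\bsbC;\lambda)$ under the boundedness hypothesis. The only genuine content is to show that, for a bounded penalty, $P(\bsbC;\lambda)$ scales like $\sigma^2 J(\bsbC)(m+\log n)$; the rest is bookkeeping and the standard passage from a pointwise oracle bound to an infimum.

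First I would bound the penalty row by row. By the assumption $P(0;\lambda)=0$ from \eqref{cond:pen}, every row with $\bsbc_i=\bsb{0}$ contributes nothing to $P(\bsbC;\lambda)=\sum_{i=1}^n P(\|\bsbc_i\|_2;\lambda)$, while every nonzero row contributes at most a constant multiple of $\lambda^2$ by the hypothesis $P(t;\lambda)\lesssim\lambda^2$. Since the number of nonzero rows is exactly $J(\bsbC)=\|\bsbC\|_{2,0}$, this yields $P(\bsbC;\lambda)\lesssim J(\bsbC)\lambda^2$. I would emphasize that only boundedness and $P(0;\lambda)=0$ are used here; nonconvexity is merely what makes the boundedness $P(t;\lambda)\lesssim\lambda^2$ hold for the examples of interest ($\ell_0$, capped $\ell_1$, minimax concave, and the like).

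Next I would substitute the prescribed threshold level $\lambda=A\sigma(m+\log n)^{1/2}$, so that $\lambda^2=A^2\sigma^2(m+\log n)$ and hence $P(\bsbC;\lambda)\lesssim \sigma^2 J(\bsbC) m+\sigma^2 J(\bsbC)\log n$, with the constant absorbing $A^2$. Plugging this into \eqref{genoracle} gives, for any feasible $(\bsbB,\bsbC)$ with $r(\bsbB)\leq r$,
$$\EE\{ M(\hat\bsbB-\bsbB^*,\hat\bsbC-\bsbC^*)\}\lesssim M(\bsbB-\bsbB^*,\bsbC-\bsbC^*)+\sigma^2(q+m)r+\sigma^2 J(\bsbC)m+\sigma^2 J(\bsbC)\log n+\sigma^2.$$
The trailing $\sigma^2$ is then absorbed into $\sigma^2(q+m)r$: since $r\geq 1$ is assumed and $m\geq 1$, one has $\sigma^2(q+m)r\geq \sigma^2 m\geq\sigma^2$, so the additive $\sigma^2$ is dominated and may be folded into the implicit constant.

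Finally, because the left-hand side does not depend on the choice of $(\bsbB,\bsbC)$ while the right-hand side holds for every feasible pair, I would take the infimum over all $(\bsbB,\bsbC)$ with $r(\bsbB)\leq r$, producing \eqref{rateoracle}. I do not anticipate a real obstacle: the single step needing care is the row-wise argument that converts the penalty into a clean count of active rows, relying on $P(0;\lambda)=0$ to kill inactive rows and on $P(t;\lambda)\lesssim\lambda^2$ to cap each active row. Everything downstream is term-matching and the routine oracle-to-infimum reduction.
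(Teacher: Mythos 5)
Your proposal is correct and is essentially the derivation the paper intends: the corollary is obtained from \eqref{genoracle} by bounding $P(\bsbC;\lambda)=\sum_i P(\|\bsbc_i\|_2;\lambda)\lesssim J(\bsbC)\lambda^2$ (using $P(0;\lambda)=0$ from \eqref{cond:pen} and the boundedness hypothesis), substituting $\lambda=A\sigma(m+\log n)^{1/2}$, absorbing the trailing $\sigma^2$ via $r\ge 1$, and taking the infimum. The paper gives no separate proof for the corollary precisely because this is the immediate consequence of Theorem \ref{th_oracle}; your row-counting step and the absorption of constants are exactly right.
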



\remark \label{remTh1} \upshape
Both  \eqref{genoracle} and \eqref{rateoracle}  involve a {bias} term $ M( \bsbB -\bsbB^*$, $\bsbC - \bsbC^*)$.
 Setting  $r=r^*$,   $\bsbB = \bsbB^*$ and $\bsbC=\bsbC^*$  in, say, \eqref{rateoracle},
we obtain  a prediction error bound   of the order
\begin{align}
\sigma^2 (q+ m) r^* + \sigma^2 J^*({m+\log n})\mbox{.} \label{essrate}
\end{align}
On the other hand, the presence of  the  bias term  ensures   applicability of   robust reduced-rank regression to weakly sparse $\bsbC^*$, and similarly,  $r$ may also deviate from $r^*$ to some extent, as a benefit from the bias-variance trade-off.

\remark \label{remTh2} \upshape
Our proof scheme can also be used to show similar conclusions for the doubly penalized form \eqref{doubpenrquad} and the  doubly constrained form \eqref{eq:constrR4},  under the general assumption that   the noise matrix has  sub-Gaussian marginal tails.    The following theorem shows the result for  \eqref{eq:constrR4} which is one of our favorable forms in practical data analysis.
\begin{theorem}\label{th_oracle_contr}
Let $(\hat\bsbB, \hat\bsbC)$ be  a solution to \eqref{eq:constrR4}. With the convention $0 \log 0=0$, we have
\begin{align*}
& \EE  \{ M(\hat \bsbB -\bsbB^*,  \hat \bsbC - \bsbC^*)\}\lesssim \\ &  \inf_{r(\bsbB) \leq r,J(\bsbC)\le \outl }   M( \bsbB -\bsbB^*, \bsbC - \bsbC^*)
 + \sigma^2\{ (q+ m) r + \outl m +\outl \log (e n /\outl)\} +\sigma^{2}.
\end{align*}
\end{theorem}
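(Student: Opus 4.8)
The plan is to prove a localized oracle inequality by the standard route of a \emph{basic inequality} followed by uniform control of the stochastic term, but with the low-rank and row-sparse structures handled simultaneously. Fix any feasible comparator $(\bsbB,\bsbC)$ with $r(\bsbB)\le r$ and $J(\bsbC)\le\outl$. Since $(\hat\bsbB,\hat\bsbC)$ minimizes $\frac12\|\bsbY-\bsbX\bsbB-\bsbC\|_{\tF}^2$ over the feasible set and $(\bsbB,\bsbC)$ is itself feasible, substituting $\bsbY=\bsbX\bsbB^*+\bsbC^*+\bsbE$ and expanding the squares yields
\begin{align*}
M(\hat\bsbB-\bsbB^*,\hat\bsbC-\bsbC^*)\le M(\bsbB-\bsbB^*,\bsbC-\bsbC^*)+2\langle\bsbE,\bsbDelta\rangle,\qquad \bsbDelta=\bsbX(\hat\bsbB-\bsbB)+(\hat\bsbC-\bsbC).
\end{align*}
The key structural observation is that $\hat\bsbB-\bsbB$ is a difference of two matrices of rank at most $r$, so $r(\hat\bsbB-\bsbB)\le 2r$ with column space inside the range of $\bsbX$; and $\hat\bsbC-\bsbC$ is supported on at most $2\outl$ rows. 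Hence $\bsbDelta$ ranges over the set $\mathcal T$ of all matrices $\bsbX\bsbM+\bsbN$ with $r(\bsbM)\le 2r$ and $J(\bsbN)\le 2\outl$, and the entire argument reduces to a single uniform bound: with high probability,
\begin{align*}
\langle\bsbE,\bsbDelta\rangle\le R\,\|\bsbDelta\|_{\tF}\ \text{ for all }\bsbDelta\in\mathcal T,\qquad R\lesssim\sigma\bigl\{(q+m)r+\outl m+\outl\log(en/\outl)\bigr\}^{1/2}.
\end{align*}

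Given such an $R$, the conclusion follows mechanically. Writing $\|\bsbDelta\|_{\tF}\le M(\hat\bsbB-\bsbB^*,\hat\bsbC-\bsbC^*)^{1/2}+M(\bsbB-\bsbB^*,\bsbC-\bsbC^*)^{1/2}$ and applying the arithmetic--geometric inequality $2R\,t\le\frac12 t^2+2R^2$ to each of the two norms, one absorbs $\frac12 M(\hat\bsbB-\bsbB^*,\hat\bsbC-\bsbC^*)$ into the left-hand side of the basic inequality, obtaining $M(\hat\bsbB-\bsbB^*,\hat\bsbC-\bsbC^*)\lesssim M(\bsbB-\bsbB^*,\bsbC-\bsbC^*)+R^2$ on the high-probability event. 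Integrating the tail of that event (using sub-Gaussian concentration, so that the result extends to the sub-Gaussian noise promised in the remark) passes from high probability to expectation, with the residual additive $\sigma^2$ coming from the lowest-order concentration slack; taking the infimum over feasible comparators $(\bsbB,\bsbC)$ then gives the stated bound, since $R^2$ is exactly the variance term $\sigma^2\{(q+m)r+\outl m+\outl\log(en/\outl)\}$.

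The heart of the proof, and the step I expect to be the main obstacle, is establishing the uniform bound on $R$. I would peel off the sparse support by a union bound over the $\binom{n}{2\outl}$ choices of the row set $S=\mathcal J(\hat\bsbC-\bsbC)$, which contributes the combinatorial cost $\log\binom{n}{2\outl}\lesssim\outl\log(en/\outl)$; for each fixed $S$ the row-sparse component lives in a fixed $(2\outl)m$-dimensional coordinate subspace, accounting for the $\outl m$ term. The delicate ingredient is the low-rank component: because $\bsbX\bsbM$ has rank at most $2r$ with column space inside the range of $\bsbX$ (dimension $q$), its contribution must be controlled by a covering/metric-entropy or random-matrix bound that yields the sharp $\sqrt r(\sqrt q+\sqrt m)$ scaling rather than the crude $\sqrt{qm}$. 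The genuine technical difficulty is that the two components are \emph{added} inside $\bsbDelta$ while the normalization is $\|\bsbX\bsbM+\bsbN\|_{\tF}$, so that $\|\bsbX\bsbM\|_{\tF}$ and $\|\bsbN\|_{\tF}$ need not individually be dominated by $\|\bsbDelta\|_{\tF}$ (possible cancellation). One therefore cannot simply split the supremum and bound the low-rank and sparse pieces separately; instead the supremum over $\mathcal T$ must be treated as a single sub-Gaussian process, bounding the metric entropy of the combined constraint set directly and adding a localization/peeling step over the level $\|\bsbDelta\|_{\tF}$. Once this combined complexity is controlled and its tail integrated, the three variance contributions assemble into the claimed rate.
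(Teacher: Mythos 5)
Your overall architecture matches the paper's: the same basic inequality with $\bsbDelta^B=\hat\bsbB-\bsbB$, $\bsbDelta^C=\hat\bsbC-\bsbC$ satisfying $r(\bsbDelta^B)\le 2r$ and $J(\bsbDelta^C)\le 2\outl$, a uniform bound on $\langle\bsbE,\bsbX\bsbDelta^B+\bsbDelta^C\rangle$ over the low-rank-plus-row-sparse class, absorption into the left-hand side, and integration of the tail to pass to expectation (the paper phrases the absorption as $2\langle\bsbE,\bsbDelta\rangle\le a^{-1}\|\bsbDelta\|_{\tF}^2+\text{complexity}+R$ with $\EE R\lesssim\sigma^2$ rather than via $R\|\bsbDelta\|_{\tF}$ and AM--GM, but these are interchangeable). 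You also correctly identify the one genuinely delicate point: possible cancellation between $\bsbX\bsbDelta^B$ and $\bsbDelta^C$ inside the normalization $\|\bsbX\bsbDelta^B+\bsbDelta^C\|_{\tF}$.

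At exactly that point, however, your proposal stops at a declaration (``treat the supremum as a single sub-Gaussian process and bound the metric entropy of the combined constraint set directly'') rather than an argument, and this is where the paper's proof has its one real idea, which you are missing. Writing $\mathcal J=\mathcal J(\bsbDelta^C)$, the paper (in Lemma 3 of the appendix) decomposes
\[
\bsbX\bsbDelta^B+\bsbDelta^C=\Proj_{\bsbI_{\mathcal J}}^{\perp}\bsbX\bsbDelta^B+\Proj_{\bsbI_{\mathcal J}}(\bsbX\bsbDelta^B+\bsbDelta^C)\equiv\bsbA_1+\bsbA_2,
\]
i.e., it splits by \emph{rows} (outlier support versus its complement) rather than by the two structural components. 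Because $\Proj_{\bsbI_{\mathcal J}}$ is an orthogonal projection, $\|\bsbA_1\|_{\tF}^2+\|\bsbA_2\|_{\tF}^2=\|\bsbX\bsbDelta^B+\bsbDelta^C\|_{\tF}^2$ exactly, so each piece \emph{is} individually dominated by the total norm and the cancellation problem disappears. Moreover $\bsbA_1$ retains rank at most $2r$ and lies in the column space of $\bsbX(\mathcal J^c,:)$, so its complexity is $\{q\wedge(n-|\mathcal J|)\}r+(m-r)r+\log{n\choose |\mathcal J|}$ (Lemma 4 of the appendix), while $\bsbA_2$ is an arbitrary matrix supported on at most $2\outl$ rows, contributing $\outl m+\log{n\choose 2\outl}$; each piece is then handled by the elementary bound $2\langle\bsbE,\bsbA_i\rangle\le 2a[\langle\bsbE,\bsbA_i/\|\bsbA_i\|_{\tF}\rangle-\{LP_o^i\}^{1/2}]_+^2+a^{-1}\|\bsbA_i\|_{\tF}^2$ together with a union bound over supports. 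Without this device, a direct entropy computation for the Minkowski-sum set intersected with a Frobenius ball cannot simply be asserted: that set contains elements in which $\|\bsbX\bsbDelta^B\|_{\tF}$ is arbitrarily large, so its covering numbers at a fixed scale are not controlled until one first passes to the row decomposition. I would regard this as a missing step rather than a wrong approach --- the fix is a two-line observation, but it is the observation the whole proof turns on.
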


 Theorem \ref{th_oracle_contr}    reveals some      breakdown point information as a by-product.      Specifically, fixing     $\bar \bsbY = \bsbX \bsbB$,    we contaminate    $\bsbY$     in the set    $\mathcal B(\outl)=\{\bsbY \in \mathbb R^{n\times m}: \bsbY = \bar \bsbY  + \bsbC + \bsbE,     \|\bsbC\|_{2,0} \leq \outl\}$, where  $    \vect(\bsbE) $ is  sub-Gaussian and $\outl\in  \mathbb{N}\cup\{0\}$.  Given      any estimator  $(\hat \bsbB, \hat\bsbC)$ which implicitly depends on $\bsbY$, we      define its risk-based finite-sample breakdown point by  $
\epsilon^*(\hat \bsbB, \hat\bsbC)= (1/n)\times \min\{\outl: \sup_{\bsb{Y}\in \mathcal {B}( \outl)}   \EE    \{M(\hat \bsbB -\bsbB,  \hat \bsbC - \bsbC)  \}=+\infty\} $, where     the randomness of  the estimator  is well   accounted   by taking the expectation. Then, for the estimator defined by \eqref{eq:constrR4},  
it follows from   Theorem \ref{th_oracle_contr} that          $\epsilon^*\geq (\outl+1)/n$. 

We emphasize that neither  Theorem \ref{th_oracle} nor Theorem \ref{th_oracle_contr}     places any requirement on $\bsbX$,  in contrast to Theorem \ref{th_oracle-l1}. 

\remark \label{remTh3} \upshape
The benefit of applying a {re-descending} $\psi$ is  clearly shown by Theorem \ref{th_oracle}. As an example, for Huber's $\psi$, which corresponds to the popular  convex $\ell_1$ penalty due to  Theorem \ref{th:rob},   $P(\bsbC; \lambda)$ on the right hand side of \eqref{genoracle} is unbounded, while Hampel's three-part $\psi$ gives a finite rate as shown in \eqref{rateoracle}. 
Furthermore, we show that in a minimax sense, the  error rate obtained in Corollary \ref{l0oracle}  is essentially optimal.
Consider the    signal class
 \begin{gather}
\mathcal {S}(r, J)= \{(\bsbB^*, \bsbC^*): r(\bsbB^*) \leq r, J(\bsbC^*)\leq J   \}, \  {1} \leq J \leq n/2,{1} \leq r \leq q\wedge m\mbox{.}
\end{gather}
Let $\ell(\cdot)$ be a nondecreasing loss function with $\ell(0)=0$, $\ell \not\equiv0$. 

\begin{theorem}
\label{th_minimax}
Let $\bsb{Y}=\bsbX\bsbB^* + \bsbC^*+\bsbE$ where   $\bsbE$ has     independently and identically distributed  $N({0},\sigma^2)$ entries. Assume that     $n\geq 2$, $1\leq J \leq n/2$, $1\leq r \leq q\wedge m$, $r(q+m-r)\geq 8$,
   and  $\sigma_{\min}(\bsbX)/\sigma_{\max}(\bsbX)$ is a positive constant, where $\sigma_{\max}(\bsbX)$ and $\sigma_{\min}(\bsbX)$  denote  the largest and the smallest   {nonzero} singular values of $\bsbX$, respectively.    Then there exist positive constants $\tilde{c}$, $c$, depending on $\ell(\cdot)$ only, such that
\begin{equation}
    \inf_{(\hat \bsbB, \hat \bsbC)}\,\sup_{(\bsbB^*, \bsbC^*) \in \mathcal {S}(r, J)}\EE(\ell[M(\hat{\bsbB}-\bsbB^*,\hat{\bsbC}-\bsbC^*)/\{\tilde{c} P_o(J,r)\}]) \geq c>0, \label{minimaxlowerbound}
\end{equation}
where   $(\hat \bsbB, \hat \bsbC)$ denotes any  estimator of $(\bsbB^*, \bsbC^*)$ and
\begin{align}
P_o(J,r)= \sigma^2\{r(q + m) + Jm + J\log(en/J)\}.
\end{align}
\end{theorem}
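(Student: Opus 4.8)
The plan is to establish \eqref{minimaxlowerbound} by the standard reduction from estimation to multiple testing, treating the three additive pieces of $P_o(J,r)$ in three separate constructions. First I would dispose of the general loss $\ell$: since $\ell$ is nondecreasing with $\ell(0)=0$ and $\ell\not\equiv 0$, there is $s_0>0$ with $\ell(s_0)>0$, and $\ell(x)\ge \ell(s_0)\,1_{x\ge s_0}$. Hence it suffices to exhibit a constant $\tilde c>0$ and a finite subfamily of $\mathcal S(r,J)$ over which every estimator incurs $M(\hat\bsbB-\bsbB^*,\hat\bsbC-\bsbC^*)\ge s_0\,\tilde c\,P_o(J,r)$ with probability bounded away from zero; the final constant $c$ then comes out as $\ell(s_0)$ times the Fano probability bound. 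Because $P_o$ is a sum of three terms and restricting the supremum to a subclass only decreases the left-hand side, it is enough to prove a lower bound $\gtrsim \sigma^2 r(q+m)$, $\gtrsim \sigma^2 Jm$, and $\gtrsim \sigma^2 J\log(en/J)$ separately. In every case the Gaussian likelihood has mean $\bsbX\bsbB+\bsbC$, so the Kullback--Leibler divergence between two parameters equals $M(\bsbB_k-\bsbB_l,\bsbC_k-\bsbC_l)/(2\sigma^2)$, which is exactly the quantity I must keep small relative to the logarithm of the packing cardinality while making the $M$-separation large.

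For the rank term I would set $\bsbC^*=\bsb{0}$ and build a packing of the rank-$r$ matrices, so that the divergence is $\|\bsbX(\bsbB_k-\bsbB_l)\|_{\tF}^2/(2\sigma^2)$. Using a Varshamov--Gilbert construction on the $r(q+m-r)$-dimensional manifold of rank-$r$ matrices (as in the low-rank matrix estimation literature) I would obtain a packing of cardinality $\exp\{c_1 r(q+m-r)\}$ whose members are pairwise separated by $\asymp \sigma^2 r(q+m)$ in $M$ while their pairwise divergences stay below a small multiple of the log-cardinality. The hypothesis $r(q+m-r)\ge 8$ guarantees the packing is large enough for Fano, and the assumption that $\sigma_{\min}(\bsbX)/\sigma_{\max}(\bsbX)$ is bounded is precisely what lets me pass between $\|\bsbX\bsbB\|_{\tF}$ and $\|\bsbB\|_{\tF}$ so that separation and divergence are controlled simultaneously. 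Fano's inequality then delivers the $\sigma^2 r(q+m)$ bound.

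For the two outlier terms I would set $\bsbB^*=\bsb{0}$, so $M(\bsb{0},\bsbC)=\|\bsbC\|_{\tF}^2$ and the model reduces to Gaussian location in $\bsbC$. For the magnitude term $Jm$, fix the support to be one $J$-subset of rows and run a Varshamov--Gilbert argument over $\{0,\gamma\}^{Jm}$ with $\gamma^2\asymp\sigma^2$, giving $\asymp Jm$ effective dimensions, separation $\asymp\sigma^2 Jm$, and controlled divergence. For the location term $J\log(en/J)$, fix each nonzero row to a common vector of squared norm $a^2\asymp\sigma^2\log(en/J)$ and vary which $J$ rows of $[n]$ carry it, using a constant-weight (Gilbert--Varshamov) code of cardinality $\exp\{c_2 J\log(n/J)\}$ with pairwise Hamming distance $\gtrsim J$. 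The $M$-separation is then $\gtrsim a^2 J\asymp \sigma^2 J\log(en/J)$, while the divergence between codewords is $\asymp a^2 J/\sigma^2\asymp J\log(en/J)$.

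The hardest part will be the simultaneous balancing in this location construction: the signal strength needed to force separation of order $\sigma^2\log(en/J)$ per outlier is exactly at the boundary where the divergence is comparable to $\log(en/J)$, so Fano applies only after tuning the code rate and the multiplicative constant $\tilde c$ so that the divergence sits strictly below $(1-2\beta)$ times $\log(\text{cardinality})\asymp J\log(n/J)$. A secondary obstacle is making the rank packing respect the $\bsbX$-weighted metric sharply; both are resolved by the constant-conditioning assumption on $\bsbX$ together with the standard Varshamov--Gilbert volume bounds. Combining the three lower bounds and taking $\tilde c$ small enough yields \eqref{minimaxlowerbound}.
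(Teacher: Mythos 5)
Your proposal is correct and follows essentially the same route as the paper's proof: the Tsybakov reduction from estimation to multiple testing, Varshamov--Gilbert hypercube packings for the rank part (with $\bsbC^*=\bsb{0}$) and for the sparse-outlier part (with $\bsbB^*=\bsb{0}$), Gaussian KL computations, and the condition-number assumption to pass between $\|\bsbX\bsbB\|_{\tF}$ and $\|\bsbB\|_{\tF}$ after restricting to the row space of $\bsbX$. The only cosmetic difference is that you use three separate packings while the paper splits into two cases according to whether $r(q+m)$ or $Jm+J\log(en/J)$ dominates, merging your last two constructions into a single one whose rows are $\gamma R(\bsb{1}^\T,\bsb{b}^\T)^\T$ with $R^2\asymp\sigma^2\{1+\log(en/J)/m\}$; both versions yield the stated bound.
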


  We give some examples of $\ell$ to illustrate the conclusion. Using the indicator function $\ell(u)=1_{u\geq 1}$,  for any estimator $(\hat{\bsbB},\hat{\bsbC})$,
$
M(\hat{\bsbB}-\bsbB^*,\hat{\bsbC}-\bsbC^*)  \gtrsim \sigma^2\{r(q + m) + Jm + J\log(en/J)\}
$
holds with positive probability. For $ \ell(u)=u$, Theorem \ref{th_minimax} shows that the risk $\EE\{M(\hat{\bsbB}-\bsbB^*,\hat{\bsbC}-\bsbC^*)\}$ is bounded from below by $P_o(J, r)$,  up to some multiplicative constant. Therefore, \eqref{essrate}    attains the minimax  optimal rate  up to a mild logarithm factor,    showing the advantage of  utilizing redescending $\psi$'s in robust low-rank estimation. The analysis is nonasymptotic and applies to any $n$, $p$, and $ m$. 


Convex methods are  not hopeless, however. In  some less challenging problems, where   some incoherence  regularity condition  is satisfied by the augmented design matrix, Huber's $\psi$ can  achieve the same low error rate. The result     of the following theorem can be extended to any sub-additive penalties with the associated  $\psi$ sandwiched by   Huber's $\psi$ and  $\psi_H$.



\begin{theorem}\label{th_oracle-l1}
Let $(\hat \bsbB, \hat\bsbC) = \arg \min_{(\bsbB, \bsbC)  } \| \bsbY - \bsbX \bsbB-\bsbC\|_{\tF}^2/2 + \lambda \|\bsbC\|_{2,1}$ subject to $r(\bsbB)\leq r$,    $\lambda =A\sigma (m+\log n)^{1/2}$ where  $A$ is a  large enough constant. Then
\begin{align}
\begin{split} \EE \{ M(\hat \bsbB -\bsbB^*,  \hat \bsbC - \bsbC^* ) \}\lesssim\,   & M( \bsbB -\bsbB^*, \bsbC - \bsbC^*)+ \sigma^2
 \\ &  + \sigma^2 (q+ m) r  +     \sigma^2K^2 J(\bsbC)(m +   \log n)   \label{eq:oraclel1K}
\end{split}
\end{align}     for  any   $(\bsbB, \bsbC)$ with $\mbox{rank}(\bsbB)\leq r$, if given     $\mathcal J= \mathcal J(\bsbC)$,    $\bsbX$ satisfies   $(1+\vartheta) \|\bsbC_{\mathcal J}'\|_{2,1}  \le \|\bsbC_{\mathcal J^c}'\|_{2,1}+ K  |\mathcal J| ^{1/2} \| (\bsbI - \Proj_r)    \bsbC'\|_{\tF}$  for all    $\bsbC'$ and $\Proj_r: \Proj_r \subset \Proj_{\bsbX}, r(\Proj_r)\le 2r$, where     $K\ge 0$   and       $\vartheta$ is a positive constant.  \end{theorem}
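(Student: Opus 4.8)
The plan is to run a basic-inequality oracle argument tailored to the coupled low-rank-plus-row-sparse geometry. Write $\bsbDelta_B=\hat\bsbB-\bsbB$, $\bsbDelta_C=\hat\bsbC-\bsbC$, $\hat\bsbDelta=\bsbX(\hat\bsbB-\bsbB^*)+(\hat\bsbC-\bsbC^*)$ and $\bsbDelta=\bsbX(\bsbB-\bsbB^*)+(\bsbC-\bsbC^*)$, so that $\|\hat\bsbDelta\|_{\tF}^2=M(\hat\bsbB-\bsbB^*,\hat\bsbC-\bsbC^*)$ and $\|\bsbDelta\|_{\tF}^2=M(\bsbB-\bsbB^*,\bsbC-\bsbC^*)$. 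Since $(\bsbB,\bsbC)$ is feasible, comparing the objective at $(\hat\bsbB,\hat\bsbC)$ with that at $(\bsbB,\bsbC)$, substituting $\bsbY=\bsbX\bsbB^*+\bsbC^*+\bsbE$ and expanding the squared Frobenius norms gives the basic inequality
\[
\tfrac12\|\hat\bsbDelta\|_{\tF}^2\le\tfrac12\|\bsbDelta\|_{\tF}^2+\langle\bsbE,\bsbX\bsbDelta_B\rangle+\langle\bsbE,\bsbDelta_C\rangle+\lambda\|\bsbC\|_{2,1}-\lambda\|\hat\bsbC\|_{2,1}.
\]
The two stochastic inner products are then controlled according to their distinct geometries.

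For the low-rank term, $\bsbX\bsbDelta_B$ has rank at most $2r$ and lies in the range of $\bsbX$, so $\langle\bsbE,\bsbX\bsbDelta_B\rangle=\langle\Proj_{\bsbX}\bsbE,\bsbX\bsbDelta_B\rangle\le\|\Proj_{\bsbX}\bsbE\|_2(2r)^{1/2}\|\bsbX\bsbDelta_B\|_{\tF}$ by trace duality and $\|\cdot\|_*\le(\mathrm{rank})^{1/2}\|\cdot\|_{\tF}$. A spectral-norm concentration bound for the rank-$q$ projected noise gives $\|\Proj_{\bsbX}\bsbE\|_2\lesssim\sigma(q^{1/2}+m^{1/2})$ on a high-probability event, producing the factor $\sigma\{(q+m)r\}^{1/2}$. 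For the row-sparse term I would use $\langle\bsbE,\bsbDelta_C\rangle\le(\max_i\|\be_i\|_2)\|\bsbDelta_C\|_{2,1}$ together with the maximal bound $\max_i\|\be_i\|_2\lesssim\sigma(m+\log n)^{1/2}$; taking $A$ large enough that $\lambda\ge2\max_i\|\be_i\|_2$ lets this be absorbed into the penalty.

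Next I would decompose the penalty over $\mathcal J=\mathcal J(\bsbC)$. Using $\|\bsbC\|_{2,1}-\|\hat\bsbC\|_{2,1}\le\|\bsbDelta_{C,\mathcal J}\|_{2,1}-\|\bsbDelta_{C,\mathcal J^c}\|_{2,1}$ together with the absorbed noise term yields a cone-type bound $\|\bsbDelta_{C,\mathcal J^c}\|_{2,1}\le c_0\|\bsbDelta_{C,\mathcal J}\|_{2,1}$, with $c_0\downarrow1$ as $A\uparrow\infty$. The crux is to convert the surviving $\ell_{2,1}$ mass on $\mathcal J$ into a Frobenius quantity. I would invoke the incoherence hypothesis with $\Proj_r$ chosen as the projection onto the range of the rank-$\le2r$ matrix $\bsbX\bsbDelta_B$, so that $\Proj_r\subset\Proj_{\bsbX}$, $r(\Proj_r)\le2r$, and $(\bsbI-\Proj_r)\bsbX\bsbDelta_B=\bsb0$; hence $(\bsbI-\Proj_r)\bsbDelta_C=(\bsbI-\Proj_r)(\hat\bsbDelta-\bsbDelta)$ and $\|(\bsbI-\Proj_r)\bsbDelta_C\|_{\tF}\le\|\hat\bsbDelta-\bsbDelta\|_{\tF}$. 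The hypothesis then reads $(1+\vartheta)\|\bsbDelta_{C,\mathcal J}\|_{2,1}\le\|\bsbDelta_{C,\mathcal J^c}\|_{2,1}+KJ(\bsbC)^{1/2}\|\hat\bsbDelta-\bsbDelta\|_{\tF}$, and combining it with the cone bound (valid because $c_0<1+\vartheta$ once $A$ is large, which is why $\vartheta>0$ is needed) self-bounds $\|\bsbDelta_{C,\mathcal J}\|_{2,1}\lesssim KJ(\bsbC)^{1/2}\|\hat\bsbDelta-\bsbDelta\|_{\tF}$, hence $\lambda\|\bsbDelta_C\|_{2,1}\lesssim\lambda KJ(\bsbC)^{1/2}\|\hat\bsbDelta-\bsbDelta\|_{\tF}$.

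At this point the right-hand side carries noise terms of the form $\sigma\{(q+m)r\}^{1/2}\|\bsbX\bsbDelta_B\|_{\tF}$ and $\lambda KJ(\bsbC)^{1/2}\|\hat\bsbDelta-\bsbDelta\|_{\tF}$ plus the slack $\|\bsbDelta\|_{\tF}\|\hat\bsbDelta-\bsbDelta\|_{\tF}$; using $ab\le\tfrac14a^2+b^2$ I would absorb the quadratic in $\|\hat\bsbDelta-\bsbDelta\|_{\tF}$ (equivalently $\|\hat\bsbDelta\|_{\tF}$, after a triangle inequality) into the left-hand side to reach $\|\hat\bsbDelta\|_{\tF}^2\lesssim\|\bsbDelta\|_{\tF}^2+\sigma^2(q+m)r+\lambda^2K^2J(\bsbC)$, which with $\lambda^2\asymp\sigma^2(m+\log n)$ is precisely \eqref{eq:oraclel1K} on the good event; taking expectations by integrating the exponential tails of the two concentration events over its complement contributes the additive $\sigma^2$. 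The main obstacle is this final absorption: the low-rank inner product naturally produces $\|\bsbX\bsbDelta_B\|_{\tF}$ rather than $\|\hat\bsbDelta-\bsbDelta\|_{\tF}$, and a naive bound $\|\bsbX\bsbDelta_B\|_{\tF}\le\|\hat\bsbDelta-\bsbDelta\|_{\tF}+\|\bsbDelta_C\|_{\tF}$ would reintroduce $\|\bsbDelta_C\|_{\tF}$ and make the low-rank and sparse rates \emph{multiply} rather than add. Keeping them additive is exactly what the incoherence condition buys, by certifying that a row-sparse perturbation supported on $\mathcal J$ cannot be cheaply realized inside a rank-$2r$ subspace of the column space of $\bsbX$; carefully exploiting the orthogonal decomposition $\|\hat\bsbDelta-\bsbDelta\|_{\tF}^2=\|\Proj_r(\hat\bsbDelta-\bsbDelta)\|_{\tF}^2+\|(\bsbI-\Proj_r)\bsbDelta_C\|_{\tF}^2$ to control $\|\bsbX\bsbDelta_B\|_{\tF}$ by a constant multiple of $\|\hat\bsbDelta-\bsbDelta\|_{\tF}$ is the delicate step, and it is also the reason a design condition on $\bsbX$ is unavoidable here, in contrast to Theorems \ref{th_oracle} and \ref{th_oracle_contr}.
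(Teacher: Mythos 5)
Your skeleton (basic inequality, cone condition from the $\ell_{2,1}$ penalty, invoking the design condition with $\Proj_r=\Proj_{\bsbX\bsbDelta_B}$ so that $(\bsbI-\Proj_r)\bsbDelta_C=(\bsbI-\Proj_r)(\hat\bsbDelta-\bsbDelta)$, then AM--GM absorption) matches the paper's, and your treatment of the row-sparse part is sound. The gap is exactly at the step you flag as ``delicate'' and then do not resolve: controlling $\|\bsbX\bsbDelta_B\|_{\tF}$ by a constant multiple of $\|\hat\bsbDelta-\bsbDelta\|_{\tF}$. The orthogonal decomposition you propose, $\|\hat\bsbDelta-\bsbDelta\|_{\tF}^2=\|\Proj_r(\hat\bsbDelta-\bsbDelta)\|_{\tF}^2+\|(\bsbI-\Proj_r)\bsbDelta_C\|_{\tF}^2$, does not deliver this, because $\Proj_r(\hat\bsbDelta-\bsbDelta)=\bsbX\bsbDelta_B+\Proj_r\bsbDelta_C$, so $\|\bsbX\bsbDelta_B\|_{\tF}\le\|\Proj_r(\hat\bsbDelta-\bsbDelta)\|_{\tF}+\|\Proj_r\bsbDelta_C\|_{\tF}$ and the term $\|\Proj_r\bsbDelta_C\|_{\tF}$ remains. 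The only handle the hypothesis gives on it is $\|\bsbDelta_C\|_{\tF}\le\|\bsbDelta_C\|_{2,1}\lesssim K J^{1/2}\|\hat\bsbDelta-\bsbDelta\|_{\tF}$, which forces the AM--GM weight in front of $\|\bsbX\bsbDelta_B\|_{\tF}^2$ to be of order $1/(K^2J)$ and therefore inflates the low-rank term to $\sigma^2(q+m)rK^2J$ --- precisely the multiplicative cross rate you set out to avoid. As written, your argument proves a weaker bound than \eqref{eq:oraclel1K}.

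The paper escapes this by never isolating $\langle\bsbE,\bsbX\bsbDelta_B\rangle$ against $\|\bsbX\bsbDelta_B\|_{\tF}$. In Lemma~\ref{lemma:phostochastic} the stochastic term is split along the \emph{row support} of $\bsbDelta_C$ rather than along the column space of $\bsbX\bsbDelta_B$: with $\mathcal J=\mathcal J(\bsbDelta_C)$ one writes $\bsbX\bsbDelta_B+\bsbDelta_C=\Proj_{\bsbI_{\mathcal J}}^{\perp}\bsbX\bsbDelta_B+\Proj_{\bsbI_{\mathcal J}}(\bsbX\bsbDelta_B+\bsbDelta_C)\equiv\bsbA_1+\bsbA_2$, so that $\|\bsbA_1\|_{\tF}^2+\|\bsbA_2\|_{\tF}^2=\|\hat\bsbDelta-\bsbDelta\|_{\tF}^2$ exactly; $\langle\bsbE,\bsbA_1\rangle$ is a low-rank term paired with $\|\bsbA_1\|_{\tF}^2$ and $\langle\bsbE,\bsbA_2\rangle$ is a $Jm$-dimensional term paired with $\|\bsbA_2\|_{\tF}^2$, so the two rates add with no cross term. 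The price is that $\bsbA_1$ now ranges over a union of $\binom{n}{J}$ column-space restrictions, so a union bound over supports (and over $J$, $r$, since $\hat\bsbC$'s support is random) is needed, contributing $\log\binom{n}{J}\asymp J\log(en/J)$ --- which is absorbed by the $\lambda^2 J\asymp\sigma^2J(m+\log n)$ budget. Your separate trace-duality and $\max_i\|\be_i\|_2$ bounds avoid the union bound but cannot be recombined additively; to complete the proof you need a joint concentration statement of the Lemma~\ref{lemma:phostochastic} type built on the support-based orthogonal splitting.
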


Compared with \eqref{rateoracle},
\eqref{eq:oraclel1K} has an additional factor of $K$ on the right-hand side.
Under a different regularity condition, an estimation error bound  on $\bsbB^*$ can be obtained. See Theorem \ref{th_est}. 
\remark \label{remTh5} \upshape
The  results obtained can be used to argue the necessity of robust   estimation when outliers   occur.   Similar to Theorem \ref{th_oracle}, we can show that the ordinary reduced-rank regression,  which sets $\hat \bsbC=\bsb0$,  satisfies
\begin{align}
 \EE \{M(\hat{\bsbB}-\bsbB^*,\hat{\bsbC}-\bsbC^*)\}\ &\lesssim   \inf_{  r(\bsbB) \leq r}    \|\bsbX  \bsbB -(\bsbX\bsbB^* + \bsbC^*) \|_{\tF}^2  + \sigma^2 (q+ m) r +\sigma^2\mbox{.} \label{rrrbnd}
\end{align}

Taking $r=r^*$, the error bound of the reduced-rank regression, evaluated at the optimal $\bsbB$   satisfying  $\bsbX \bsbB = \bsbX \bsbB^* + \Proj_{\bsbX \bsbB^*} \bsbC^*$ and $r(\bsbB)\le r$, is of order
\begin{align}
\sigma^2 (q+ m) r^* + \|(\bsbI - \Proj_{\bsbX \bsbB^*} )\bsbC^*\|_{\tF}^2\mbox{.} \label{rrrsimpbnd1}
\end{align}
Because $\bsbX \bsbB^*$ has   low rank,   $\bsbI - \Proj_{\bsbX \bsbB^*}$ is not null in general. Notable outliers that can affect the projection subspace in performing rank reduction tend to occur in the orthogonal complement   of the range of  ${\bsbX \bsbB^*}$, and so   \eqref{rrrsimpbnd1}  can be  arbitrarily large, which echoes the deterministic breakdown-point conclusion in Theorem \ref{th:bp}.

To control the size of the bias term, a better way    is to apply a larger rank value in the presence of outliers. Concretely, setting  $\bsbB=\bsbB^* + (\bsbX^\T\bsbX)^- \bsbX^\T \bsbC^*$ in \eqref{rrrbnd} yields
\begin{align}
\sigma^2J^{*} q+ \sigma^2  J^* m+\sigma^2 (q + m) r^* +  \| (\bsbI - \Proj_{\bsbX}) \bsbC^*\|_{\tF}^2, \label{rrrsimpbnd2}
\end{align}
where we used     $r(\bsbB) \leq r^* + J^*$.
When   $p> n$,  $\Proj_{\bsbX}=\bsbI$, and so \eqref{rrrsimpbnd2} offers an improvement  over  \eqref{rrrsimpbnd1} by  giving a finite error rate of  $\sigma^2J^{*} q+ \sigma^2  J^* m+\sigma^2 (q + m) r^* $. But our robust reduced-rank regression guarantees a consistently lower rate at $\sigma^2 J^* \log n+ \sigma^2  J^*m +\sigma^2 (q+ m) r^*  $, since $\sigma^2 J^* q \gg \sigma^2 J^* \log n$. The performance gain can be dramatic in big data applications, where the design matrix  is huge and typically multiple outliers are bound to occur.

\section{Computation and Tuning}\label{sec:ipod}

In this section, we show that compared with the M-characterization in    Theorem \ref{th:rob}, the additive formulation   \eqref{model2}     simplifies     computation and parameter tuning.  Let us consider a penalized form of the robust reduced-rank regression problem
\begin{align}
\min_{\bsbB, \bsbC} F(\bsbB, \bsbC)= \frac{1}{2} \|\bsbY - \bsbX \bsbB - \bsbC\|_{\tF}^2 + \sum_{i=1}^n P( \|\bsbc_i \|_2;\lambda) \  \mbox{subject to}  \ r(\bsbB) \leq r\mbox{.} \label{eq:penR4-again}
\end{align}
The penalties of interest  may be nonconvex in light of the theoretical results in Section \ref{sec:theory}, as        stringent incoherence assumptions  associated with convex penalties can be much relaxed or even removed.  Assuming that $P$ is constructed by   \eqref{eq:construction},
a simple algorithm  for solving \eqref{eq:penR4-again} is described as follows, where the two matrices $\bsbC$ and $\bsbB$ are alternatingly updated with the other held fixed until   convergence. Here, the multivariate thresholding,    $\vec\Theta$, is defined on basis of   $\Theta$, cf.  Definition \ref{def:threshold} and  Definition \ref{def:thresholdmulti}. 

\begin{algorithm}[!h]
\caption{A robust reduced-rank regression algorithm.} \label{al1}
\begin{tabbing}
   \enspace  Input $\bsbX$, $\bsbY$, $\bsbC^{(0)}$, $\bsbB^{(0)}$,   $\Theta$, $t=0$.\\
   \enspace Repeat \\
   \quad  (a) $t\leftarrow t+1$\\
   \quad  (b) $\bsbC^{(t+1)} \leftarrow \vec\Theta( \bsbY - \bsbX \bsbB^{(t)};\lambda)$\\
   \quad (c) $\bsbB^{(t+1)} \leftarrow    \mathcal R (\bsbX, \bsbY - \bsbC^{(t+1)},  r)$, as defined in \eqref{rrr-sol}\\
   \enspace Until convergence. 
\end{tabbing}
\end{algorithm}

Step (b) performs simple multivariate thresholding operations and    Step (c) does  reduced-rank regression   on the adjusted response matrix $\bsbY- {\bsbC}^{(t+1)}$. We do not really have to explicitly compute    $\bsbB$    to   update $\bsbC$ in the iterative process. In fact, only $\bsbX \bsbB^{(t)}$ is needed, which   depends on $\bsbX$ through $\Proj_{\bsbX}$, or   $\bsbI$ when $p\gg n$. The eigenvalue decomposition   called in \eqref{rrr-sol}    has   low computational  complexity because      the rank values of practical interest are often   small. Algorithm \ref{al1} is   simple to implement and is cost-effective.  For example, even for $p=1200$ and $n=m=100$, it takes only about 40 seconds to compute a whole solution path for a two-dimensional grid of 100 values of $\lambda$  and 10 rank values.

\begin{theorem} \label{compconv}
Let $\Theta$ be an arbitrary thresholding rule, and    $F$ be defined in \eqref{eq:penR4-again}, where $P$ is associated with $\Theta$ through  \eqref{eq:construction}. Then given any $\lambda\ge 0$ and $r\ge 0$, the proposed algorithm has the property that $F(\bsbB^{(t)}, \bsbC^{(t)})\ge F(\bsbB^{(t+1)}, \bsbC^{(t+1)}) $ for all $t $, and so   $F(\bsbB^{(t)}, \bsbC^{(t)})$ converges as $t\rightarrow \infty$. Furthermore, under the assumptions that  $\vec\Theta(\cdot; \lambda)$   is continuous  in the closure of     $ \{ \bsbY - \bsbX \bsbB^{(t)} \}$ and $\{\bsbB^{(t)}\}$ is uniformly bounded,       any accumulation point  of $(\bsbB^{(t)},  \bsbC^{(t)})$ is a  coordinatewise minimum point, and  a stationary point  when $q(\cdot; \lambda)\equiv 0$,  and hence  $F(\bsbB^{(t)}, \bsbC^{(t)})$ converges monotonically to $F(\bsbB^*, \bsbC^*)$ for some coordinatewise minimum point $(\bsbB^* , \bsbC^*)$.
\end{theorem}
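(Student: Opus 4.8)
The plan is to read Algorithm \ref{al1} as exact block coordinate descent and then run a standard accumulation-point argument, with the only real care needed at the junction where the two blocks are updated one index apart. First I would establish the monotone decrease. Holding $\bsbB^{(t)}$ fixed, the $\bsbC$-subproblem is row-separable, and by the thresholding--penalty correspondence \eqref{eq:construction} underlying Theorem \ref{th:rob}(i), Step (b) returns a \emph{global} minimizer $\bsbC^{(t+1)}=\vec\Theta(\bsbY-\bsbX\bsbB^{(t)};\lambda)$ of $F(\bsbB^{(t)},\cdot)$, so $F(\bsbB^{(t)},\bsbC^{(t)})\ge F(\bsbB^{(t)},\bsbC^{(t+1)})$. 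Holding $\bsbC^{(t+1)}$ fixed, Step (c) is an ordinary reduced-rank regression on the adjusted response $\bsbY-\bsbC^{(t+1)}$, whose global minimizer over $\{r(\bsbB)\le r\}$ is the closed form \eqref{rrr-sol}, so $F(\bsbB^{(t)},\bsbC^{(t+1)})\ge F(\bsbB^{(t+1)},\bsbC^{(t+1)})$. Chaining gives monotonicity, and since $P\ge 0$ (the integrand in \eqref{eq:construction} is nonnegative by property (iv) of Definition \ref{def:threshold}) we have $F\ge 0$, so $F(\bsbB^{(t)},\bsbC^{(t)})\downarrow F^\star$ for some $F^\star\ge 0$.

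Next I would secure compactness and analyze accumulation points. The hypothesis gives $\{\bsbB^{(t)}\}$ bounded, and property (iv) gives $\|\vec\Theta(\bsb{a};\lambda)\|_2\le\|\bsb{a}\|_2$, whence $\|\bsbC^{(t+1)}\|_{\tF}\le\|\bsbY-\bsbX\bsbB^{(t)}\|_{\tF}$ is bounded too, so a convergent subsequence $(\bsbB^{(t_k)},\bsbC^{(t_k)})\to(\bsbB^\star,\bsbC^\star)$ exists. For the $\bsbB$-block, $\bsbB^{(t_k)}=\mathcal R(\bsbX,\bsbY-\bsbC^{(t_k)},r)$ is globally optimal, so $F(\bsbB^{(t_k)},\bsbC^{(t_k)})\le F(\bsbB,\bsbC^{(t_k)})$ for all $r(\bsbB)\le r$; letting $k\to\infty$ and using continuity of $F$ (the quadratic term is continuous, and under the assumed continuity of $\vec\Theta$ the penalty is continuous through its integral representation) yields $F(\bsbB^\star,\bsbC^\star)\le F(\bsbB,\bsbC^\star)$, i.e.\ $\bsbB$-block optimality. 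For the $\bsbC$-block I would sidestep the fact that $\bsbC^{(t_k)}$ and $\bsbC^{(t_k+1)}$ may have different limits: set $\bsbC^{\star\star}=\vec\Theta(\bsbY-\bsbX\bsbB^\star;\lambda)=\lim_k\bsbC^{(t_k+1)}$ by continuity of $\vec\Theta$, and pass to the limit in $F(\bsbB^{(t_k)},\bsbC^{(t_k+1)})\le F(\bsbB^{(t_k)},\bsbC)$ to get that $\bsbC^{\star\star}$ minimizes $F(\bsbB^\star,\cdot)$. Because every $F$-value converges to the \emph{same} $F^\star$, continuity forces $F(\bsbB^\star,\bsbC^\star)=F^\star=F(\bsbB^\star,\bsbC^{\star\star})=\min_{\bsbC}F(\bsbB^\star,\bsbC)$, so $\bsbC^\star$ itself attains the block minimum. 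Hence $(\bsbB^\star,\bsbC^\star)$ is a coordinatewise minimum and $F(\bsbB^{(t)},\bsbC^{(t)})\downarrow F(\bsbB^\star,\bsbC^\star)$.

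For the stationarity claim when $q(\cdot;\lambda)\equiv 0$, I would translate block optimality into first-order conditions: the $\bsbB$-block yields the reduced-rank normal equations, while $\bsbC$-block optimality at $\bsbC^\star=\vec\Theta(\bsbY-\bsbX\bsbB^\star;\lambda)$ is, when $q\equiv 0$, equivalent to the clean subgradient relation $\bsbY-\bsbX\bsbB^\star-\bsbC^\star\in\partial\{\sum_i P(\|\bsbc_i^\star\|_2;\lambda)\}$ supplied by \eqref{eq:construction}; since the coupling term is smooth and the nonsmooth pieces live in separate blocks, these combine into stationarity of $F$. The step I expect to demand the most care is the accumulation-point argument: reconciling the one-step-apart iterates $\bsbC^{(t_k)}$ and $\bsbC^{(t_k+1)}$ \emph{without} assuming $\|\bsbC^{(t+1)}-\bsbC^{(t)}\|_{\tF}\to 0$, which is precisely what convergence of the objective values to a single limit buys, together with verifying that the continuity hypothesis on $\vec\Theta$ genuinely propagates to the continuity (or at least the semicontinuity) of $P$ needed to justify the limit passages in $F$.
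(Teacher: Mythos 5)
Your first paragraph reproduces the paper's own argument for the descent property: both steps are exact global minimizers of their block subproblems (Step (b) by the thresholding--penalty correspondence of Lemma \ref{uniqsol-gen-grp}, Step (c) by the closed form \eqref{rrr-sol}), so monotonicity and convergence of the objective values follow at once. For the accumulation-point claim you take a genuinely different route: the paper invokes Zangwill's Global Convergence Theorem (leaning on the fact that, under the continuity hypothesis, $\vec\Theta(\bsbY-\bsbX\bsbB;\lambda)$ is the \emph{unique} minimizer of $F(\bsbB,\cdot)$) and defers the details to an external argument, whereas you give a self-contained subsequence argument that reconciles the one-step-apart iterates through the common limit $F^\star$ of the objective values. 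Your $\bsbB$-block step is in fact cleaner than you claim it needs to be: in $F(\bsbB^{(t_k)},\bsbC^{(t_k)})\le F(\bsbB,\bsbC^{(t_k)})$ the penalty term is identical on both sides and cancels, so only the continuous quadratic part survives the limit and no regularity of $P$ is required there.

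The one place where your justification is wrong as written is the assertion that ``the penalty is continuous through its integral representation.'' $P_\Theta$ is always continuous, but $P=P_\Theta+q$ need not be: for hard thresholding, $q(t;\lambda)=(\lambda-|t|)^2 1_{0<|t|<\lambda}/2$ and $P$ is the $\ell_0$ penalty, discontinuous at $0$. Hence $F$ is only lower semicontinuous, which gives $F(\bsbB^\star,\bsbC^\star)\le F^\star$ and $F(\bsbB^\star,\bsbC^{\star\star})\le F^\star$ but not the reverse inequalities that your step ``continuity forces $F(\bsbB^\star,\bsbC^\star)=F^\star=F(\bsbB^\star,\bsbC^{\star\star})$'' requires. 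The repair is already in the paper: by Lemma \ref{thresh-identity} (equivalently Theorem \ref{th:rob}), $\min_{\bsbC}F(\bsbB,\bsbC)=\sum_i\rho(\|\bsby_i-\bsbB^\T\bsbx_i\|_2;\lambda)$ is a \emph{continuous} function of $\bsbB$, so $F(\bsbB^{(t_k)},\bsbC^{(t_k+1)})=\min_{\bsbC}F(\bsbB^{(t_k)},\bsbC)\to\min_{\bsbC}F(\bsbB^\star,\bsbC)$, and since the left side is sandwiched between $F(\bsbB^{(t_k)},\bsbC^{(t_k)})$ and $F(\bsbB^{(t_k+1)},\bsbC^{(t_k+1)})$ it also tends to $F^\star$; combining $\min_{\bsbC}F(\bsbB^\star,\bsbC)=F^\star$ with lower semicontinuity and the trivial bound $F(\bsbB^\star,\bsbC^\star)\ge\min_{\bsbC}F(\bsbB^\star,\bsbC)$ closes the argument. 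Finally, your stationarity sketch for $q\equiv0$ does not say what a first-order condition means on the nonconvex set $\{\bsbB: r(\bsbB)\le r\}$; the paper handles this by reparametrizing $\bsbB=\bsbS\bsbV^\T$ with $\bsbV$ orthonormal and working with Riemannian gradients on that manifold, and you would need this (or an explicit normal-cone formulation) to make ``the reduced-rank normal equations'' precise.
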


The algorithm can be slightly modified to deal with     \eqref{elemwise},   \eqref{eq:constrR4},  and \eqref{doubpenrquad}. For example, we can replace  $\vec\Theta$   by $\Theta$, applied componentwise, to handle    element-wise outliers. 
 The   $\ell_0$-penalized form
with $P(\bsbC;\lambda)= ( {\lambda^2}/{2}) \|\bsbC\|_{2,0}$,
as well as  the constrained form \eqref{eq:constrR4}, will be used in data analysis and simulation. In implementation, they correspond to applying hard-thresholding and quantile-thresholding operators \citep{she2013b}.

 In common with most high breakdown  algorithms in robust statistics,   we recommend using the multi-sampling iterative strategy    \citep{rousseeuw1999fast}.
But  in many practical applications, we found  that the     initial values   can   be made rather freely. Indeed, Theorem \ref{th:algstat} shows that  if the problem is  regular, our algorithm  guarantees   low  statistical error   even without   the multi-start strategy.

In the following theorem, given   $\Theta$, define $
{\mathcal L}_{\Theta} = 1- \essinf\{ \rd \Theta^{-1}(u;\lambda)/\rd u: u \ge 0\}  ,
$
where    $\essinf$ is the   essential infimum. By definition, ${\mathcal L}_{\Theta}\le 1$.   We use $P_{2,\Theta}(\bsbC;\lambda)$ to denote $\sum_{i=1}^n P_{\Theta}(\| \bsbc_i\|_2;\lambda)$ for short and    set    $r = (1+\alpha) r^*  $ with $\alpha\ge 0$ and $r^*\ge 1$.  

\begin{theorem} \label{th:algstat}
Let $(\hat \bsbB, \hat \bsbC)$ be any solution satisfying  $\hat \bsbB =  \mathcal R(\bsbX, \bsbY - \hat \bsbC , r)$ and $\hat \bsbC = \vec\Theta( \bsbY - \bsbX \hat \bsbB ;\lambda)$ with $\hat \bsbB$ of rank $r$ and $\vec \Theta$   continuous at  $\bsbY - \bsbX \hat \bsbB $.  Let $\Theta$ be associated with a bounded nonconvex penalty as described in Corollary  \ref{l0oracle} and $\lambda =A\sigma (m+\log n)^{1/2}$ with  $A$     a  large enough constant.  Assume   that
$
     (1+\alpha)^{-1/2}\| \bsbX  \bsbB  - \bsbX  \bsbB^*\|_{\tF}^2 +  {\mathcal L_{\Theta}}  \| \bsbC  - \bsbC^*\|_{\tF}^2
 +   \vartheta P_{2,H } (\bsbC-\bsbC^*;\lambda) \le    ({2-\delta})  M( {\bsbB}-\bsbB^*, {\bsbC}-\bsbC^*) +  2P_{2,\Theta } (\bsbC;\lambda) + \zeta P_{2,0}  (\bsbC^*; \lambda)
$
holds for all  $(\bsbB, \bsbC)$   satisfying $r(\bsbB)\le r$, where $\zeta\ge 0$,   $\delta>0$ and $\vartheta>0 $ are   constants. Then
$
\EE \{M(\hat{\bsbB}-\bsbB^*,\hat{\bsbC}-\bsbC^*)\}  \lesssim \sigma^2 (1+\alpha)(q+ m) r^*  +     \sigma^2  J^*  m + \sigma^2 J^* \log n  $.
\end{theorem}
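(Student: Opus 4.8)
The plan is to work entirely through the two fixed-point relations that define $(\hat\bsbB,\hat\bsbC)$, since it is only a coordinatewise minimum and joint optimality is unavailable. Writing $\bsbDelta_B=\bsbX\hat\bsbB-\bsbX\bsbB^*$ and $\bsbDelta_C=\hat\bsbC-\bsbC^*$, the target is $M(\hat\bsbB-\bsbB^*,\hat\bsbC-\bsbC^*)=\|\bsbDelta_B+\bsbDelta_C\|_{\tF}^2$, and the working residual decomposes as $\bsbY-\bsbX\hat\bsbB-\hat\bsbC=\bsbE-\bsbDelta_B-\bsbDelta_C$. The relation $\hat\bsbB=\mathcal{R}(\bsbX,\bsbY-\hat\bsbC,r)$ says $\bsbX\hat\bsbB$ is the exact best rank-$r$ fit to $\Proj_{\bsbX}(\bsbY-\hat\bsbC)$, giving a clean projection inequality against any rank-$r$ competitor. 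The relation $\hat\bsbC=\vec\Theta(\bsbY-\bsbX\hat\bsbB;\lambda)$ I would handle through the variational characterization of multivariate thresholding, using the identity of Lemma \ref{thresh-identity} applied row by row; here $\mathcal{L}_\Theta$ is exactly the cocoercivity/convexity-defect constant that measures how much the penalized $\bsbC$-step behaves like a strongly convex proximal step, hence how much ``non-joint-optimality defect'' must be paid.

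First I would turn each relation into an inequality and combine them, obtaining a basic inequality that controls $M(\hat\bsbB-\bsbB^*,\hat\bsbC-\bsbC^*)$ together with the retained penalty $P_{2,\Theta}(\hat\bsbC;\lambda)$ from above by the noise inner products $\langle\bsbE,\bsbDelta_B\rangle$ and $\langle\bsbE,\bsbDelta_C\rangle$, a comparison penalty $P_{2,\Theta}(\bsbC;\lambda)$, and a bias $M(\bsbB-\bsbB^*,\bsbC-\bsbC^*)$, with the thresholding step supplying the decoupled quadratic $\mathcal{L}_\Theta\|\bsbDelta_C\|_{\tF}^2$ and a $P_{2,H}$ contribution. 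To convert the noise inner products into rates I would split them by Cauchy--Schwarz into a small multiple of the decoupled quadratics $\|\bsbDelta_B\|_{\tF}^2,\|\bsbDelta_C\|_{\tF}^2$ plus variance terms; the assumed regularity inequality is precisely the device that re-absorbs these decoupled quadratics, since it bounds $(1+\alpha)^{-1/2}\|\bsbX\bsbB-\bsbX\bsbB^*\|_{\tF}^2+\mathcal{L}_\Theta\|\bsbC-\bsbC^*\|_{\tF}^2+\vartheta P_{2,H}(\bsbC-\bsbC^*;\lambda)$ from above by $(2-\delta)M+2P_{2,\Theta}(\bsbC;\lambda)+\zeta P_{2,0}(\bsbC^*;\lambda)$. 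After the absorption a net positive multiple $\delta$ of $M$ survives on the favourable side, and taking the comparison point to be $(\bsbB^*,\bsbC^*)$, admissible because $r^*\le r$, kills the bias.

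The remaining work is the two concentration estimates, taken in expectation. For $\langle\bsbE,\bsbDelta_B\rangle$ I would bound the supremum of the Gaussian process over differences $\bsbX\bsbB-\bsbX\bsbB^*$, which lie in the column space of $\bsbX$ (dimension $q$) and have rank at most $r+r^*\le 2r$; a covering argument gives a bound of order $\sigma\{(q+m)r\}^{1/2}\|\bsbDelta_B\|_{\tF}$, which after the split and $r=(1+\alpha)r^*$ contributes the $\sigma^2(1+\alpha)(q+m)r^*$ term, balanced by the $(1+\alpha)^{-1/2}$ weight in the regularity condition. For $\langle\bsbE,\bsbDelta_C\rangle$ I would use row-sparsity: the active rows of $\hat\bsbC$ must first be counted, using that $\lambda=A\sigma(m+\log n)^{1/2}$ with $A$ large dominates the per-row noise $\sigma(m+\log n)^{1/2}$, so the process runs over row-sparse matrices with a bounded number of nonzero rows; the $\binom{n}{J}$ union bound yields the $J\log(en/J)$ factor and the per-row dimension yields $m$, giving a bound of order $\sigma^2 J^*(m+\log n)$. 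Because the penalty is bounded nonconvex with $P(t;\lambda)\lesssim\lambda^2$, the term $\zeta P_{2,0}(\bsbC^*;\lambda)$ is itself of order $\sigma^2 J^*(m+\log n)$, since $P_{2,0}(\bsbC^*;\lambda)\asymp\lambda^2 J^*$ and $\lambda^2\asymp\sigma^2(m+\log n)$. Assembling everything and solving the resulting self-bounding inequality for $\EE\{M(\hat\bsbB-\bsbB^*,\hat\bsbC-\bsbC^*)\}$ then yields the stated rate.

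The main obstacle is the lack of joint optimality. Unlike Theorem \ref{th_oracle}, where $F(\hat\bsbB,\hat\bsbC)\le F(\bsbB,\bsbC)$ is immediate, here only the two partial conditions are available and they must be glued without leaving an uncontrolled cross term; the constant $\mathcal{L}_\Theta$ and the calibrated coefficients $\delta,\vartheta,\zeta$ in the regularity assumption are exactly what make the gluing close, and keeping the bookkeeping among $P_{2,\Theta}$, $P_{2,H}$ and $P_{2,0}$ mutually consistent is delicate. A second difficulty is that the active-row count $J(\hat\bsbC)$ is not known in advance, so the sparse concentration bound cannot be invoked until this count is controlled through the thresholding relation. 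I expect this self-referential control of the sparsity of $\hat\bsbC$, rather than the covering arguments, to be the most technical part of the proof.
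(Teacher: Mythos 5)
Your proposal is correct and follows essentially the same route as the paper: the paper formalizes your two fixed-point inequalities as Lemmas \ref{A7} and \ref{A8} (yielding exactly the constants $\mathcal L_{\Theta}$ and $(1+\alpha)^{-1/2}$ that you identify), recasts the pair of partial optimality conditions as a fixed point of a surrogate objective (Lemma \ref{A6}), and then delegates your gluing-plus-absorption step to Theorem 2 of She (2016a), with the noise terms handled exactly as in your sketch via the low-rank and row-sparse suprema of Lemma \ref{lemma:phostochastic}. The self-referential sparsity control you flag as the hardest part is sidestepped in the paper's machinery by taking the supremum over all sparsity levels with the $P_{2,H}$ term built into the stochastic bound, which is precisely why $\vartheta P_{2,H}(\bsbC-\bsbC^*;\lambda)$ appears on the left of the regularity condition.
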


To choose an optimal rank for $\bsbB$ and an optimal row support for $\bsbC$ jointly,   cross-validation appears to be an   option.   However, it lacks theoretical support in    the robust low-rank setting, and for large-scale problems, cross-validation can be quite   expensive. Motivated by     Theorem \ref{th_minimax}, we propose the    predictive information criterion
\begin{align}
  \log\|\bsbY-\bsbX {\bsbB} - {\bsbC} \|_{\tF}^2   +\frac{1}{mn}[ A_1\{  J m + (m+q-r)r\} + A_2J \log (en  /    J)] ,\label{PIC}
\end{align}
where $\|\bsbY-\bsbX {\bsbB} - {\bsbC} \|_{\tF}^2$ is the residual sum of squared errors,  $r = r(\bsbB)$,   $  J=\|  \bsbC\|_{2,0}$, and recall that $e$ denotes the Euler constant.  The term $ J m + (m+q-r)r$  counts the degrees of freedom of the obtained model, and $J \log (en /   J)$ characterizes the risk inflation. The benefits of the criterion   include          no noise scale parameter needs to be estimated, and    minimizing \eqref{PIC}   achieves the minimax optimal  error  rate when the true model is parsimonious, as is shown below.
\begin{theorem}\label{logpic}
Let $P(\bsbB, \bsbC) =   J  m + (m+q-r)r  + J \log (en  /    J)$,  where   $r = r(\bsbB)$ and  $  J=\|  \bsbC\|_{2,0}$. Suppose that the true model is parsimonious in the sense that $ P(\bsbB^*, \bsbC^*)  < mn / A_0$ for some   constant $A_0>0$.    Let        $\delta(\bsbB, \bsbC) = AP(\bsbB, \bsbC) \allowbreak /(mn)$ where  $A$   is a positive constant satisfying $ A<A_0$, and so $\delta(\bsbB^*, \bsbC^*)<1$.     Then for sufficiently large values of  $\bsbA_0$ and $A$, any  $(\hat{\bsbB} , \hat\bsbC )$ that   minimizes  $\log \|\bsbY-\bsbX {\bsbB} - {\bsbC} \|_{\tF}^2 +\delta(\bsbB, \bsbC)$ subject to $\delta(\bsbB,\bsbC)<1$ satisfies $M(\hat{\bsbB}-\bsbB^*,\hat{\bsbC}-\bsbC^*) \lesssim \sigma^2 \{J^* m + (m+q-r^*)r^*  +J^{*} \log (en  /    J^*)\}$ with probability at least $1 - c_1' n^{- c_1 }-c_2'\exp(-c_2 m n )$ for some  constants $c_1, c_1', c_2, c_2'>0$.
\end{theorem}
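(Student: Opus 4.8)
The plan is to compare the minimizer $(\hat\bsbB,\hat\bsbC)$ against the true pair $(\bsbB^*,\bsbC^*)$, which is feasible because $\delta(\bsbB^*,\bsbC^*)<1$. Write $\hat{\bsbDelta}=\bsbX(\hat\bsbB-\bsbB^*)+(\hat\bsbC-\bsbC^*)$, so that $M(\hat\bsbB-\bsbB^*,\hat\bsbC-\bsbC^*)=\|\hat{\bsbDelta}\|_{\tF}^2$ and $\bsbY-\bsbX\hat\bsbB-\hat\bsbC=\bsbE-\hat{\bsbDelta}$. Abbreviate $P^*=P(\bsbB^*,\bsbC^*)$, $\hat P=P(\hat\bsbB,\hat\bsbC)$, $\delta^*=\delta(\bsbB^*,\bsbC^*)$, $\hat\delta=\delta(\hat\bsbB,\hat\bsbC)$. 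The defining optimality inequality gives $\log\|\bsbE-\hat{\bsbDelta}\|_{\tF}^2+\hat\delta\le\log\|\bsbE\|_{\tF}^2+\delta^*$, that is, $\log(\|\bsbE-\hat{\bsbDelta}\|_{\tF}^2/\|\bsbE\|_{\tF}^2)\le\delta^*-\hat\delta=:D$ with $D<1$.

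Next I would linearize the logarithm. Since $\log(1+x)\ge x/(1+x)$, the previous display yields $\|\bsbE-\hat{\bsbDelta}\|_{\tF}^2\le\|\bsbE\|_{\tF}^2/(1-D)$, and expanding $\|\bsbE-\hat{\bsbDelta}\|_{\tF}^2=\|\bsbE\|_{\tF}^2-2\langle\bsbE,\hat{\bsbDelta}\rangle+\|\hat{\bsbDelta}\|_{\tF}^2$ gives
\[
\|\hat{\bsbDelta}\|_{\tF}^2\le\frac{D}{1-D}\,\|\bsbE\|_{\tF}^2+2\langle\bsbE,\hat{\bsbDelta}\rangle .
\]
The crucial feature is that $\delta=AP/(mn)$ is scale-free: on the event $\tfrac12\sigma^2 mn\le\|\bsbE\|_{\tF}^2\le2\sigma^2 mn$, which holds with probability at least $1-c_2'\exp(-c_2 mn)$ by $\chi^2_{mn}$ concentration, the factor $mn$ cancels against $\delta$. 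Treating the cases $D\ge0$ and $D<0$ separately (the former simply uses $\hat P\le P^*$; the latter needs the lower bound on $\|\bsbE\|_{\tF}^2$ to extract the favorable $-\hat\delta$ contribution), I would obtain, for absolute constants $c_1,c_2>0$ depending only on the bound on $\delta^*$,
\[
\|\hat{\bsbDelta}\|_{\tF}^2\le c_1\sigma^2 A P^*-c_2\sigma^2 A\hat P+2\langle\bsbE,\hat{\bsbDelta}\rangle .
\]

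The main obstacle is the empirical-process term $\langle\bsbE,\hat{\bsbDelta}\rangle$, whose index depends on the data through the random rank $r(\hat\bsbB)$ and row-support $\mathcal J(\hat\bsbC)$. Here I would reuse the machinery behind Theorem~\ref{th_oracle_contr}: decompose $\hat{\bsbDelta}$ into its low-rank part $\bsbX(\hat\bsbB-\bsbB^*)$, of rank at most $r(\hat\bsbB)+r^*$ with column space inside $\Proj_{\bsbX}$, and its row-sparse part $\hat\bsbC-\bsbC^*$, supported on at most $J(\hat\bsbC)+J^*$ rows. For a fixed subspace and a fixed support the inner product is a Gaussian linear functional governed by a $\chi^2$ of the corresponding dimension; a net over the rank-$(r+r^*)$ subspaces produces the $(q+m)r$ term, while a union over the $\binom{n}{J}$ row-patterns produces exactly the risk-inflation term $J\log(en/J)$ together with $Jm$. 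Peeling over the finitely many values of $(r(\hat\bsbB),J(\hat\bsbC))$ handles the data-dependence, the feasibility constraint $\hat\delta<1$ forcing $\hat P<mn/A$ so that the effective dimension stays a small fraction of $mn$. This delivers, with probability at least $1-c_1'n^{-c_1}$, the uniform bound $2\langle\bsbE,\hat{\bsbDelta}\rangle\le\tfrac12\|\hat{\bsbDelta}\|_{\tF}^2+C'\sigma^2(\hat P+P^*)$ with an absolute deviation constant $C'$.

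Finally I would combine the two bounds: the $\tfrac12\|\hat{\bsbDelta}\|_{\tF}^2$ is absorbed on the left, and choosing $A$ large enough that $c_2 A\ge C'$ (while keeping $A<A_0$ with $A_0$ large enough that $\delta^*\le A/A_0$ stays bounded away from $1$) makes the coefficient of $\hat P$ nonpositive. What remains is $\|\hat{\bsbDelta}\|_{\tF}^2\lesssim\sigma^2 P^*=\sigma^2\{J^*m+(m+q-r^*)r^*+J^*\log(en/J^*)\}$ on the intersection of the two events, i.e. with probability at least $1-c_1'n^{-c_1}-c_2'\exp(-c_2 mn)$, which is the assertion. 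The two delicate points are the scale-free cancellation of $mn$ via two-sided control of $\|\bsbE\|_{\tF}^2$, and the uniform empirical-process bound over the random model, which is precisely where the logarithmic factor and the dimension counts are generated.
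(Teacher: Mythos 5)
Your proposal is correct and follows essentially the same route as the paper's proof: comparing against the feasible truth, converting the logarithmic criterion into a rational bound (your $\log(1+x)\ge x/(1+x)$ plays the role of the paper's sandwich $1/(1-\delta)\ge\exp(\delta)\ge 1/(1-\delta/2)$ applied to $h(\bsbB,\bsbC;A)=1/\{mn-AP(\bsbB,\bsbC)\}$), using two-sided $\chi^2$ concentration of $\|\bsbE\|_{\tF}^2$ to cancel the $mn$, and bounding the data-dependent stochastic term uniformly over ranks and row-supports so that it is controlled by $\sigma^2\{P(\hat\bsbB,\hat\bsbC)+P(\bsbB^*,\bsbC^*)\}$, after which a large enough $A$ kills the $\hat P$ term. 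The paper realizes the uniform stochastic bound via an explicit four-way projection decomposition of $\bsbX\bsbDelta^B-\bsbDelta^C$ (onto $\mathcal J^*$, $(\mathcal J^*)^c\cap\hat{\mathcal J}$, their complement, and the row space of $\bsbX\bsbB^*$), which is exactly the peeling-plus-union-bound device you describe.
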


Based on computer experiments, we set $A_1 = 7$, $A_2 = 2$.



\section{Arabidopsis Thaliana Data}\label{sec:app}

We performed extensive simulation studies to compare our method with some classical robust multivariate regression approaches and several reduced-rank methods    \citep{Tatsuoka2000,VanAelst2005,Roelant2009,reinsel1998, bunea2011,mukh2011} in both low  and high dimensions. The results are reported in the Appendices and show the excellent performance of the proposed method.

 Isoprenoids are abundant and diverse in plants, and they serve many important biochemical functions and have roles in respiration, photosynthesis and regulation of growth and development in plants. To examine the regulatory control mechanisms in the gene network for isoprenoid in Arabidopsis thaliana, a genetic association study was conducted, and with $n=118$ GeneChip microarray experiments performed  to   monitor   gene expression levels under various experimental conditions \citep{Wille2004}. It was experimentally verified that there exist strong connections between some downstream pathways and two isoprenoid biosynthesis pathways. We thus considered a multivariate regression setup, with the expression levels of $p=39$ genes from the two isoprenoid biosynthesis pathways serving as predictors, and the expression levels of $m=62$ genes from four downstream pathways, namely plastoquinone, caroteniod, phytosterol and chlorophyll, serving as the responses.

Because of the small sample size relative to the number of unknowns, we applied robust reduced-rank regression with the  predictive information criterion for  parameter  tuning. The final model has rank five, which reduces the effective number of unknowns   by about 80\% compared with the least squares model.  Interestingly, our method also identified  two outliers,  samples 3 and  52. Figure \ref{fig:path} shows the   detection paths by plotting     the $\ell_2$ norm of each   row in the $\bsbC$-estimates   for a sequence of  values of $\lambda$. The two unusual samples are   distinctive. The outlyingness might be caused by different experimental conditions. In particular, sample 3 was the only sample with Arabidopsis tissue culture in a baseline experiment.   The two outliers have a surprisingly big impact     on both coefficient estimation and model prediction. This can  be seen from    $\|\hat{\bsbB}-\tilde{\bsbB}\|_{\tF}/\|\tilde{\bsbB}\|_{\tF}\approx 50\%$, and  $\|\bsbX\hat{\bsbB}-\bsbX\tilde{\bsbB}\|_{\tF}/\|\bsbX\tilde{\bsbB}\|_{\tF}\approx 26\%$, where $\hat{\bsbB}$ and $\tilde{\bsbB}$ denote the robust reduced-rank regression and the plain reduced-rank regression estimates, respectively.
In addition, Figure \ref{fig:path} reveals that sample 27 could be a potential outlier which merits further investigation.

\begin{figure}[htp]
\centering
{\includegraphics[scale=0.7]{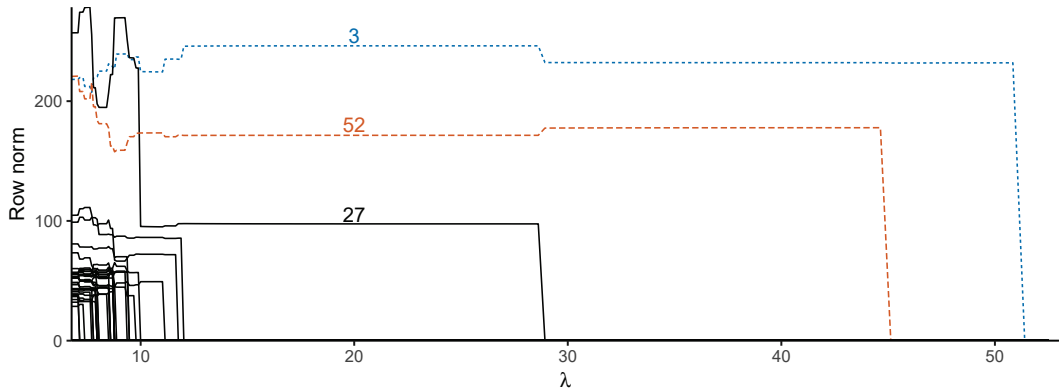}}
\caption{Arabidopsis thaliana data: outlier detection paths by the robust reduced-rank regression. Sample 3 and sample 52 are captured as outliers, whose paths are shown as a dotted line and a dashed line, respectively. The path plot also suggests sample 27 as a potential outlier.}\label{fig:path}
\end{figure}





The  low-rank model obtained    reveals  robust score variables, or factors, constructed from   isoprenoid biosynthesis
pathways, in response to the $62$  genes on the four downstream pathways. Let $\tilde{\bsbX}$ denote  the design matrix after removing the two detected outliers, and     $ \hat{\bsbU}\hat{\bsbD}\hat{\bsbV}^\T$ be  the singular value decomposition of $\tilde{\bsbX}\hat{\bsbB}$. Then $\hat\bsbU$ delivers five   orthogonal factors, and $\hat{\bsbV}\hat{\bsbD}$ gives the associated factor coefficients. Figure \ref{fig:loading} plots the   coefficients of the first three leading factors for all 62 response variables. 
Given the $s$th factor ($s=1,2,3$), the genes are grouped into the four pathways   separated by   vertical lines,   and   two horizontal lines are placed at heights $\pm \sigma_s^{ \tilde{\bsbX}\hat{\bsbB} }  {m}^{-1/2}$. Therefore, the genes located beyond the two horizontal lines   have relatively large coefficients     on the corresponding factor in magnitude.

\begin{figure}[htp]
\centering
{\includegraphics[width=\textwidth]{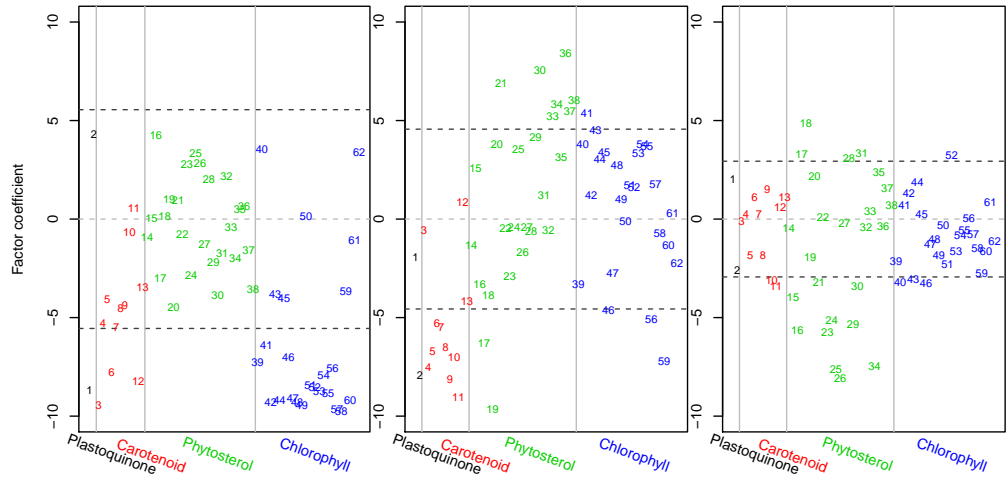}}
\caption{Arabidopsis thaliana data: factor coefficients of the 62 response genes from plastoquinone, caroteniod, phytosterol, and chlorophyll pathways. The panels from left to right correspond to the top three   factors estimated by the robust reduced-rank regression.  For the $s$th factor ($s=1,2,3$), two horizontal lines are plotted at heights $\pm \sigma_s^{ \tilde{\bsbX}\hat{\bsbB} }  {m}^{-1/2}$, and three vertical lines separate the genes into   different pathways.}\label{fig:loading}
\end{figure}

We also tested the significance of   the     factors in response to   each of the $62$ genes; see Table \ref{tab:5}.  Plastoquinone was excluded since it  has only two genes   and its behavior couples with that of caroteniod most of the time. Even   with    the  family-wise error rate controlled at $0.01$, the    factors obtained are overall  predictive according to the significance percentages, although they play very different roles in different pathways.
In fact,        according to Figure \ref{fig:loading} and  Table \ref{tab:5}, the genes that are correlated with the first factor are mainly from caroteniod and chlorophyll, and almost all the   coefficients there are   negative. It seems that the first factor   interprets   some joint characteristics of caroteniod and chlorophyll. The second factor  differentiates      phytosterol genes from     caroteniod genes, and the third factor seems to mainly contribute to   the phytosterol pathway.  Therefore,  by projecting the data onto a proper low-dimensional subspace in a supervised and robust manner, distinct behaviors of the downstream pathways and their potential subgroup structures can be revealed. More biological insights could be gained by closely examining the experimental and background conditions.





\begin{table}[t]
\centering
\small
\caption{\small Arabidopsis thaliana data: percentage of genes on each response pathway that show significance of a given factor, with the family-wise   error rate controlled at level $0.01$ 
}\label{tab:5}
\begin{tabular}{ccccc}

    Pathway      &   Number of genes           &   Factor 1 &   Factor 2 &   Factor 3 \\








Carotenoid &         11 &    55\% &    73\% &     9\% \\

Phytosterol &         25 &    20\% &   48\%&    32\%\\

Chlorophyl &         24 &   75\%&    21\% &     0\% \\

\end{tabular}
\end{table}

 \appendix
 \appendixpage
 \section{Proofs}
\label{sec:proofs}
\subsection{Notation and definitions}
\label{subsec:notation}
Given $\mathcal I\subset [n], \mathcal J\subset [p]$, $\bsbX(\mathcal I, \mathcal J)$ denotes a submatrix of $\bsbX$ by extracting  the rows and columns indexed by $\mathcal I$ and $\mathcal J$, respectively.
We use  $c$, $L$ to denote   constants. They are not necessarily the same at each occurrence. Denote by   $CS(\bsbA)$     the column space of $\bsbA$. Given     $\Proj_{\bsbA}$,  denote by $\Proj_{\bsbA}^{\perp}$ the projection onto  its orthogonal complement. In addition to the definitions of thresholding function $\Theta$ and the multivariate thresholding function $\vec\Theta$, we will use a matrix threshold function.



%

\begin{definition}[Matrix threshold function]\label{def:thresholdmat}
Given any  threshold function $\Theta(\cdot; \lambda)$, its matrix version $\Theta^{\sigma}$ is defined   for   $ \bsbB \in \mathbb R^{n\times m}$ as follows
\begin{eqnarray}
\Theta^{\sigma}(\bsbB;\lambda)= \bsbU\mbox{diag}\{\Theta(\sigma_{i}^{ \bsbB }; \lambda)\}\ \bsbV^\T,  \label{matthetadef}
\end{eqnarray}
where $\bsbU$, $\bsbV$, and $ \sigma_{i}^{ \bsbB }$ are obtained from the SVD of $\bsbB$: $\bsbB=\bsbU \mbox{diag}(\sigma_{i}^{ \bsbB }) \bsbV^\T$\mbox{.}
\end{definition}

Finally, we describe a {{quantile thresholding}} $\Theta^{\#}(\cdot; \outl, \eta)$ which   is convenient   in analyzing the  constraint-type problems.  It can be seen as a vector variant of the {hard-ridge} thresholding $\Theta_{HR}(t; \lambda, \eta)=t/(1+\eta) 1_{|t|> \lambda}$ \citep{She2009}. Given $1\le \outl \le  n$ and $\eta\ge 0$, $\Theta^{\#}(\bsb{a}; \outl, \lambda): {\mathbb R}^n\rightarrow {\mathbb R}^n$ is defined for any $\bsb{a}\in {\mathbb R}^n$ such that the $\outl$ largest components of $\bsb{a}$,  in absolute value, are shrunk   by a factor of $(1+\lambda)$ and the remaining components are all set to be zero.
In the case of ties, a random tie breaking rule is used. We abbreviate
    $\Theta^\#(\bsb{a}; \outl, 0)$ to  $\Theta^{\#}(\bsb{a}; \outl)$.

\subsection{Proof of Theorem 1}
We show  the proof detail for the penalized estimators. First,    the loss term in the objective can be decomposed  into   \begin{align*} \tr\{(\bsbY - \bsbX \bsbB) \bsbGamma (\bsbY - \bsbX \bsbB)^\T\} & = \|\bsbY \bsbGamma^{1/2} - \bsbX \bsbB\bsbGamma^{1/2}\|_{\tF}^2 \\ &=\|\Proj_{\bsbX}\bsbY \bsbGamma^{1/2} - \bsbX \bsbB\bsbGamma^{1/2}\|_{\tF}^2 + \|\Proj_{\bsbX}^{\perp} \bsbY \bsbGamma^{1/2}\|_{\tF}^2 \mbox{.}
\end{align*} Let $\bsbZ = \Proj_{\bsbX}\bsbY \bsbGamma^{1/2} $. Clearly, $\Proj_{\bsbZ} \subset \Proj_{\bsbX}$.
Consider the following  optimization problem
\begin{align}
\min_{\bsbA}\frac{1}{2} \|\bsbZ - \bsbA\|_{\tF}^2 + \sum _{s=1}^{p\wedge m}P(\sigma_s^{ \bsbA}; \lambda)\mbox{.} \label{subprob0}
\end{align}
 From   the proof of Proposition 2.1 in \cite{She2013},
the following results can be  obtained:   (i) any optimal solution $\hat \bsbA$ to \eqref{subprob0} must satisfy $\hat \bsbA \in \Proj_{\bsbZ} $;  (ii)  $ \bsbA_o=\Theta^\sigma(\bsbZ; \lambda)$ gives  a particular minimizer of \eqref{subprob0}, and  $\|\hat\bsbA - \bsbA_o \|_* \leq C(\lambda)$  holds for any  $\hat \bsbA$,  where $\| \cdot \|_*$ represents the nuclear norm and $C(\lambda)$ is a function dependent on the regularization parameter  only. From (i),     $\bsbX \hat\bsbB \bsbGamma^{1/2}$   is always      a solution to \eqref{subprob0}. It suffices to study the breakdown point of $\bsbA_o$.

Because $\bsbX \neq \bsb{0}$, there must exist $i\in [n]$ such that  the $i$th column of $\Proj_{\bsbX}$ is not $\bsb{0}$. Let $\tilde \bsbY = \bsbY + M \bsb{e}_i \bsb{e}_1^\T$.
where $\bsb{e}_i$ is the unit vector with the $i$th entry being  $1$. Due to the construction of $\tilde \bsbY$ and the positive-definiteness   of $\bsbGamma$,
$$\|\Proj_{\bsbX}\tilde \bsbY \bsbGamma^{1/2}\|_{\tF}^2=M^2 \|\Proj_{\bsbX} \bsb{e}_i \bsb{e}_1^\T \bsbGamma^{1/2}\|_{\tF}^2 + 2M \langle\Proj_{\bsbX} \bsbY, \bsb{e}_i \bsb{e}_1^\T \bsbGamma\rangle+ \|\Proj_{\bsbX}\bsbY \bsbGamma^{1/2} \|_{\tF}^2 \rightarrow +\infty$$
as $M\rightarrow \infty$.
 That is, given  $\lambda$,        $\Theta^\sigma (\Proj_{\bsbX} \tilde \bsbY \bsbGamma^{1/2} ; \lambda)$  thresholds the singular values of $\Proj_{\bsbX}\tilde \bsbY \bsbGamma^{1/2}$   the sum of which can be made arbitrarily large as $M$ increases. It follows from the definition of $\Theta$ that     $\sup_{M}\|\Theta^\sigma(\Proj_{\bsbX}\tilde \bsbY \bsbGamma^{1/2} ;\lambda)\|_{\tF}= \infty$.

The proof for the reduced-rank regression estimator follows similar  lines and is omitted.

\subsection{Proof of Theorem 2}
Part {(i)}: The proof of this part is based on the following two lemmas.
\begin{lemma}
\label{uniqsol-gen-grp}
Given an arbitrary thresholding rule $\Theta$ satisfying Definition \ref{def:threshold} in the paper,
let $P$ be any function associated with $\Theta$ through
\begin{align*}
P(t;\lambda) - P(0;\lambda)=P_{\Theta}(t; \lambda) +q(t;\lambda), \ \  P_{\Theta}(t; \lambda)=\int^{|t|}_0 [\sup \{s:\Theta(s;\lambda) \le u\}-u]\rd u,
\end{align*}
for some nonnegative $q(\theta; \lambda)$ satisfying  $q\{\Theta(t;\lambda)\}=0$ for all $t$.
Then, $\hat\bsbb=\vec\Theta(\bsby;\lambda)$ gives a globally optimal solution to
\begin{align*}
\min_{\bsbb \in {\mathbb R}^n}  \frac{1}{2}\|\bsby-\bsbb\|_2^2 + P(\|\bsbb\|_2;\lambda)\mbox{.}
\end{align*}
\end{lemma}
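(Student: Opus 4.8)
The plan is to reduce this $n$-dimensional problem to the one-dimensional penalized least-squares problem that the thresholding $\Theta$ is constructed to solve, by separating the optimization over the direction of $\bsbb$ from that over its magnitude. Assuming $\bsby\neq\bsb{0}$, I would write $\bsbb=t\bsbu$ with $t=\|\bsbb\|_2\ge 0$ and $\|\bsbu\|_2=1$ and expand
\[
\tfrac12\|\bsby-t\bsbu\|_2^2+P(t;\lambda)=\tfrac12\|\bsby\|_2^2-t\langle\bsby,\bsbu\rangle+\tfrac12 t^2+P(t;\lambda).
\]
For every fixed $t\ge 0$ only the cross term depends on the direction, so by Cauchy--Schwarz the objective is minimized at $\bsbu=\bsby/\|\bsby\|_2$; this step is elementary and insensitive to whether $P$ is convex. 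Substituting the optimal direction collapses the problem to
\[
g(t)=\tfrac12(\|\bsby\|_2-t)^2+P(t;\lambda),\qquad t\ge 0.
\]

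It then remains to minimize $g$ over $t\ge 0$, which is exactly the scalar penalized least-squares problem whose penalty is coupled to $\Theta$ via \eqref{eq:construction}. Here I would invoke the scalar thresholding result underlying Lemma \ref{thresh-identity} \citep{She2009, She2012}: $\hat t=\Theta(\|\bsby\|_2;\lambda)$ is a global minimizer of $\tfrac12(\|\bsby\|_2-t)^2+P_\Theta(t;\lambda)$, and appending $q$ does not disturb this, because $q\ge 0$ everywhere whereas $q\{\Theta(s;\lambda);\lambda\}=0$ leaves the value at $\hat t$ unchanged while only raising it elsewhere. Since $\|\bsby\|_2\ge 0$, property (iv) of Definition \ref{def:threshold} gives $0\le\hat t\le\|\bsby\|_2$, so $\hat t$ satisfies the constraint $t\ge 0$. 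Reassembling the optimal direction and magnitude yields the global minimizer $\bsbb=\hat t\,\bsby/\|\bsby\|_2=\Theta(\|\bsby\|_2;\lambda)\,\bsby/\|\bsby\|_2=\vec\Theta(\bsby;\lambda)$. The degenerate case $\bsby=\bsb{0}$ is immediate: the same construction gives $P(t;\lambda)\ge P(0;\lambda)$ for all $t$, because $\Theta(s;\lambda)\le s$ makes the integrand of $P_\Theta$ nonnegative and $q\ge 0$, so $\tfrac12\|\bsbb\|_2^2+P(\|\bsbb\|_2;\lambda)$ is minimized at $\bsbb=\bsb{0}=\vec\Theta(\bsb{0};\lambda)$.

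The one genuinely delicate ingredient, and what I expect to be the main obstacle, is the scalar global-optimality claim for a possibly nonconvex $P$: the direction step is a convexity-free Cauchy--Schwarz argument, but the magnitude step must certify that $\Theta(\|\bsby\|_2;\lambda)$ dominates every competing local minimum of $g$. This is precisely where the integral construction \eqref{eq:construction} of $P$ from $\Theta$ is essential, and I would lean on the established scalar result rather than re-derive it here.
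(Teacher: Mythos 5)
Your argument is correct, but it is worth noting that the paper itself does not prove this lemma at all: it simply states that the result ``is implied by Lemma 1 of \cite{She2012}'', i.e.\ it cites the group version of the thresholding--penalty coupling result wholesale. What you do differently is reconstruct that group result from first principles: the polar decomposition $\bsbb = t\bsbu$ plus Cauchy--Schwarz isolates the direction (a step that, as you correctly emphasize, needs no convexity of $P$), collapsing the problem to the scalar minimization of $g(t)=\tfrac12(\|\bsby\|_2-t)^2+P(t;\lambda)$ over $t\ge 0$, after which only the univariate coupling result is needed. Your handling of the remaining details is also sound: property (iv) of Definition \ref{def:threshold} guarantees $\hat t=\Theta(\|\bsby\|_2;\lambda)\in[0,\|\bsby\|_2]$, so the unconstrained scalar minimizer respects the constraint $t\ge0$; the term $q\ge0$ with $q\{\Theta(s;\lambda);\lambda\}=0$ cannot displace the minimizer; the additive constant $P(0;\lambda)$ is irrelevant to the argmin; and the case $\bsby=\bsb0$ follows from $P_\Theta\ge0$ (since $\Theta(u;\lambda)\le u$ makes the integrand nonnegative). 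What your route buys is transparency --- it makes explicit that the only genuinely delicate ingredient is the scalar global-optimality statement, and it would let a reader verify the lemma by consulting only the univariate result in \cite{She2009} rather than the group lemma of \cite{She2012}. What the paper's route buys is brevity, at the cost of leaving the reduction implicit. Either way the core scalar fact is taken from prior work, which is acceptable here since the lemma is explicitly framed as a consequence of the construction \eqref{eq:construction}.
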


This result is implied by   Lemma 1 of \cite{She2012}. It is worth mentioning that      $\vec\Theta(\bsby;\lambda)$ is not necessarily   unique when $\Theta$ has discontinuities.
Next we prove an identity.
\begin{lemma}\label{thresh-identity} Given any thresholding rule  $\Theta(t; \lambda)$, define $P_{\Theta}(t; \lambda) =\int_0^{|t|} \{\Theta^{-1}(u;\lambda) - u\} \rd u$ where $\Theta^{-1}(u;\lambda)=\sup \{t:\Theta(t;\lambda) \le u\}$. Then the following identity holds for any $r\in\mathbb R$
\begin{align}
\frac{1}{2} \{r - \Theta(r; \lambda)\}^2 + P_{\Theta}\{\Theta(r; \lambda); \lambda\}  = \int_0^{|r|} \psi(t; \lambda) \rd t,
\end{align}
where $\psi(t; \lambda) = t - \Theta(t; \lambda)$.
\end{lemma}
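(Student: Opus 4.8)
The plan is to reduce the stated identity to a single area-decomposition formula for a monotone function and its generalized inverse -- essentially the equality case of Young's inequality -- and then to prove that formula by Tonelli's theorem, so that the jumps and flat stretches a general thresholding rule may have cause no trouble. Throughout I would fix $\lambda$ and suppress it from the notation.

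First I would reduce to $r\ge 0$. Since $\Theta(\cdot;\lambda)$ is odd by Definition \ref{def:threshold}(i), the function $\psi(t)=t-\Theta(t)$ is odd, so the right-hand side $\int_0^{|r|}\psi$ depends on $r$ only through $|r|$. On the left, oddness gives $r-\Theta(r)=-(|r|-\Theta(|r|))$ when $r<0$, so the squared term is unchanged, while $P_{\Theta}\{\Theta(r)\}$ depends only on $|\Theta(r)|=\Theta(|r|)$ because $P_{\Theta}$ is defined through an integral up to an absolute value; hence both sides are even in $r$ and it suffices to treat $r\ge 0$. Writing $a=\Theta(r)\in[0,r]$ (using Definition \ref{def:threshold}(iv)), I would expand $P_{\Theta}(a)=\int_0^a\Theta^{-1}(u)\rd u-a^2/2$ and $\int_0^r\psi(t)\rd t=r^2/2-\int_0^r\Theta(t)\rd t$. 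Substituting into the claim and cancelling the common $r^2/2$ and $a^2/2$, everything collapses to the single statement
\[
\int_0^r\Theta(t)\rd t+\int_0^{\Theta(r)}\Theta^{-1}(u)\rd u=r\,\Theta(r).
\]
This is the heart of the matter, and the only place I expect real subtlety, since $\Theta$ need not be continuous or strictly increasing and $\Theta^{-1}$ is merely a generalized inverse.

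To establish the displayed identity I would use Tonelli rather than integration by parts, which avoids any smoothness hypothesis. The key observation is that, because $\Theta$ is nondecreasing with $\Theta(0)=0$ and $\Theta(t)\to\infty$ (Definition \ref{def:threshold}(ii)--(iii)), the sublevel set $\{t\ge 0:\Theta(t)\le u\}$ is a bounded interval anchored at the origin, of the form $[0,\Theta^{-1}(u)]$ up to its right endpoint, and hence has Lebesgue measure exactly $\Theta^{-1}(u)<\infty$. I would then write $\int_0^{\Theta(r)}\Theta^{-1}(u)\rd u$ as the double integral of the indicator of $\{(t,u):t\ge 0,\ 0\le u\le\Theta(r),\ \Theta(t)\le u\}$ and interchange the order of integration. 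For $u<\Theta(r)$, monotonicity forces any admissible $t$ to satisfy $t<r$, so the region sits inside $[0,r]\times[0,\Theta(r)]$; integrating in $u$ first yields $\int_0^r\{\Theta(r)-\Theta(t)\}\rd t=r\,\Theta(r)-\int_0^r\Theta(t)\rd t$, which is exactly the desired formula. The difference between the strict and non-strict sublevel sets enters only at the at most countably many heights $u$ where $\Theta$ is flat, hence on a null set, so it does not affect the integral. Tracing the reductions back then delivers the lemma for all $r\in\mathbb R$.
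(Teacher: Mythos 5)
Your proposal is correct and follows essentially the same route as the paper: after cancelling the quadratic terms, both arguments reduce the claim to the Young-type identity $\int_0^r\Theta(t;\lambda)\rd t+\int_0^{\Theta(r;\lambda)}\Theta^{-1}(u;\lambda)\rd u=r\,\Theta(r;\lambda)$ and prove it by interchanging the order of integration, with the flat stretches of $\Theta$ contributing only on a null set of levels. The only difference is cosmetic — you apply Tonelli to the $\Theta^{-1}$ integral while the paper applies it to the $\Theta$ integral, and you spell out the reduction to $r\ge 0$ that the paper leaves as "without loss of generality."
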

\begin{proof}
Without loss of generality, assume $r\geq 0$. By    definition,   $ \int_0^{r} \psi(t; \lambda) \rd t =   r^2/2 - \int_0^r \Theta(t;\lambda) \rd t$ and $P_{\Theta}\{\Theta(r; \lambda); \lambda\}=\int_0^{\Theta(r;\lambda)} \Theta^{-1}(t;\lambda) \rd t-  r^2/2$. It suffices to show that $$\int_0^{\Theta(r;\lambda)} \Theta^{-1}(t;\lambda) \rd t + \int_0^r \Theta(t;\lambda) \rd t = r \Theta(r; \lambda)\mbox{.}$$
 In fact,   changing the order of integration, and using   the monotone property of $\Theta$,
 we get \begin{align*}
 \int_0^r \Theta(t;\lambda) \rd t - r \Theta(r; \lambda) &= \int_0^r \rd t \int_0^{\Theta(t;\lambda)} \rd s - \int_0^{\Theta(r;\lambda)} r \rd t\\
&=\int_0^{\Theta(r;\lambda)} \rd s \int_{ \Theta^{-1}(s;\lambda)}^{r} \rd t - \int_0^{\Theta(r;\lambda)} r \rd t \\
&= -\int_0^{\Theta(r;\lambda)} \Theta^{-1}(t;\lambda) \rd t\mbox{.}
\end{align*}
The conclusion thus follows.
\end{proof}

We   have the pieces in place to prove  part (i) of the theorem. Without loss of generality, assume $\bsbGamma=\bsbI$. Let $f(\bsbB, \bsbC)=  \tr\{(\bsbY - \bsbX \bsbB - \bsbC) (\bsbY - \bsbX \bsbB - \bsbC)^\T\}/2  + \sum_{i=1}^n P( \|\bsbGamma^{1/2}\bsbc_i   \|_2;\lambda)$, and $g(\bsbB) =\sum_{i=1}^n \rho( \| (\bsby_i  -  \bsbB^\T\bsbx_i )\|_2; \lambda)$. By Lemma \ref{uniqsol-gen-grp}, fixing $\bsbB$,   $\hat \bsbC=(\bsbc_1 \ \ldots \ \bsbc_n)^\T$ with  $\hat \bsbc_i=\vec\Theta(\bsby_i  -  \bsbB^\T\bsbx_i ; \lambda)$ gives an optimal solution to $\min_{\bsbC} f(\bsbB, \bsbC)$. For this $\hat \bsbC$, 
$f(\bsbB, \hat\bsbC) = g(\bsbB)$ holds by Lemma \ref{thresh-identity}. 

Part (ii): The proof follows similar lines of  that of Part (i), based on  the     quantile thresholding and Lemma C.1 in   \cite{she2013b}.
The details are  omitted.

\subsection{Proofs of Theorem 3 \& Theorem 6}
\label{proof_oraclel0l1}

Recall that  $P_1(t; \lambda) = \lambda |t|$,  $P_0(t; \lambda) =  ({\lambda^2}/{2}) 1_{t\neq 0}$, $P_H(t; \lambda)= (-t^2/2+\lambda |t|)1_{|t|<\lambda} +(\lambda^2/2) 1_{|t|\geq \lambda}.
$ For convenience,  $P_{2,1} (\bsbC; \lambda)$ is used to denote $\lambda \| \bsbC\|_{2,1}$, and $P_{2,0}$ and $P_{2, H}$  are used similarly.

 By definition,  $(\hat \bsbB, \hat \bsbC)$ satisfies the following inequality for any $(\bsbB, \bsbC)$ with $r(\bsbB)\leq r$,
\begin{align}
\frac{1}{2}M( \hat \bsbB -  \bsbB^*, \hat \bsbC -  \bsbC^*)\leq \frac{1}{2}M( \bsbB -  \bsbB^*, \bsbC -  \bsbC^*) +   P(\bsbC; {\lambda}) - P(\hat \bsbC; {\lambda})
+ \langle \bsbE, \bsbX \bsbDelta^B + \bsbDelta^C \rangle\mbox{.} \label{est:firstineq}
\end{align}
Here,  $\bsbDelta^B=\hat \bsbB -  \bsbB$, $\bsbDelta^C=\hat \bsbC -  \bsbC$ and so $r(\bsbDelta^B)\leq 2r$.
\begin{lemma} \label{lemma:phostochastic}
For any given $1\leq J \leq n, 1\leq r\leq m\wedge  p$,    define $\Gamma_{r, J} = \{(\bsbB, \bsbC)\in \mathbb R^{p\times m}\times \mathbb R^{n\times m} : r(\bsbB) \leq r, J(\bsbC) = J\}$. Then there exist universal constants $A_0, C, c>0$ such that for any $a\geq 2 b > 0$, the following event
\begin{align}
\sup_{(\bsbB, \bsbC)\in \bsbGamma_{r, J}} \Big\{2\langle \bsbE, \bsbX  \bsbB + \bsbC\rangle - \frac{1}{ a} \|\bsbX  \bsbB + \bsbC \|_{\tF}^2 - \frac{1}{b}    P_{2,H}(\bsbC; {\lambda}) - aA_0 \sigma^2 r(m+q) \Big\} \geq a  \sigma^2 t
\end{align}
occurs with probability at most  $ c' \exp(-c t)$, where $\lambda = A\lambda^o$, $\lambda^o=\sigma(m+ \log n)^{1/2}$, $A =  (a b A_1)^{1/2}$, $A_1\ge A_0$, and $t\geq 0$.
\end{lemma}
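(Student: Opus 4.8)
The plan is to control this empirical process by separating the low-rank direction $\bsbX\bsbB$ from the row-sparse direction $\bsbC$, treating the former by a projection-plus-concentration argument and the latter row by row, and then reconciling the two through the shared quadratic term using the hypothesis $a\ge 2b$. First I would reduce to a fixed row support: writing $\mathcal J$ for the support of $\bsbC$ with $|\mathcal J|=J$, there are at most $\binom{n}{J}\le (en/J)^{J}$ choices, so a union bound reduces matters to the supremum over $\{(\bsbB,\bsbC): r(\bsbB)\le r,\ \mathrm{supp}(\bsbC)\subseteq\mathcal J\}$ for a single $\mathcal J$, at the cost of an extra $\log\binom{n}{J}\asymp J\log(en/J)$ in the exponent. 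The crucial point is that this combinatorial cost, together with the per-row dimension $m$, must be charged to $\tfrac1b P_{2,H}(\bsbC;\lambda)$ and \emph{not} to the deterministic offset $aA_0\sigma^2 r(m+q)$; this is exactly what the calibration $\lambda=\sqrt{abA_1}\,\sigma(m+\log n)^{1/2}$ with $A_1\ge A_0$ large is designed to buy.

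For the low-rank part I would use that $\bsbX\bsbB$ has column space inside $\mathrm{CS}(\bsbX)$, so $\langle\bsbE,\bsbX\bsbB\rangle=\langle\Proj_{\bsbX}\bsbE,\bsbX\bsbB\rangle$, and the variational bound $\langle\Proj_{\bsbX}\bsbE,\bsbX\bsbB\rangle\le \big(\sum_{i=1}^{r}\sigma_i^2(\Proj_{\bsbX}\bsbE)\big)^{1/2}\|\bsbX\bsbB\|_{\tF}$, valid since $r(\bsbX\bsbB)\le r$. Because $\Proj_{\bsbX}\bsbE$ acts as a $q\times m$ Gaussian matrix, Gaussian concentration for the Lipschitz singular-value functionals gives $\sum_{i=1}^{r}\sigma_i^2(\Proj_{\bsbX}\bsbE)\lesssim \sigma^2 r(q+m)$ on an event of probability $1-c'e^{-ct}$, with a sub-exponential upper deviation of order $\sigma^2 t$. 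An AM--GM split against a share of the quadratic $\tfrac1a\|\cdot\|_{\tF}^2$ then produces the offset $aA_0\sigma^2 r(m+q)$ and a deviation $a\sigma^2 t$.

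For the row-sparse part I would bound $\langle\bsbE,\bsbC\rangle=\sum_{i\in\mathcal J}\langle\be_i,\bsbc_i\rangle\le\sum_{i\in\mathcal J}\|\be_i\|_2\|\bsbc_i\|_2$ and analyze each row separately. On the event $\max_{i\in[n]}\|\be_i\|_2^2\lesssim\sigma^2(m+\log n)=(\lambda^o)^2$—which holds with the required probability by a $\chi^2_m$ tail bound together with a union bound over the $n$ rows, and which is the source of the $m+\log n$ in $\lambda^o$—a one-variable calculation shows that $2\|\be_i\|_2\,u-\tfrac{\kappa}{a}u^2-\tfrac1b P_H(u;\lambda)\le 0$ for all $u\ge0$, because $\lambda$ is chosen to dominate $\sqrt{ab}\,\|\be_i\|_2$ (saturated rows are killed by the penalty plateau $\lambda^2/(2b)$, and small rows by the local quadratic part of $P_H$ once $\lambda\ge 2b\|\be_i\|_2$). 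Hence every active row contributes non-positively, and the $Jm$ cost never enters the offset.

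The step I expect to be the main obstacle is reconciling these two bounds through the single coupled quadratic $\tfrac1a\|\bsbX\bsbB+\bsbC\|_{\tF}^2$: since \emph{no} incoherence or restricted-isometry condition is imposed on $\bsbX$, one cannot replace $\|\bsbX\bsbB+\bsbC\|_{\tF}^2$ by $\|\bsbX\bsbB\|_{\tF}^2+\|\bsbC\|_{\tF}^2$, and a crude cross-term bound produces a wrong-signed $+\|\bsbC\|_{\tF}^2$ that would destroy the row-wise control. The fix is to allocate the quadratic budget carefully—using $a\ge 2b$ to assign the low-rank direction coefficient $\tfrac{1}{2a}$ and to leave the sparse direction enough curvature for the per-row calculation—while exploiting that the only regime in which $\|\bsbX\bsbB+\bsbC\|_{\tF}$ is small yet both pieces are large is $\bsbX\bsbB\approx-\bsbC$, where $\tfrac1b P_{2,H}(\bsbC)$ is itself large and drives the whole expression negative. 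Assembling the low-rank event, the row-norm event and the union bound over supports, and checking that every discarded event has probability at most $c'e^{-ct}$, then yields the stated tail.
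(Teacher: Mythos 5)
There is a genuine gap, and it is exactly at the point you flag as ``the main obstacle'': your decomposition of the stochastic term into $2\langle \bsbE, \bsbX\bsbB\rangle + 2\langle\bsbE,\bsbC\rangle$ cannot be reconciled with the single coupled quadratic $\tfrac1a\|\bsbX\bsbB+\bsbC\|_{\tF}^2$, and the fix you sketch does not work. In the cancellation regime $\bsbc_i=-(\bsbX\bsbB)_i$ for $i\in\mathcal J$ with $\|\bsbc_i\|_2\to\infty$, your low-rank bound needs $\tfrac{1}{2a}\|\bsbX\bsbB\|_{\tF}^2$ of curvature and your row-wise bound needs $\tfrac{\kappa}{a}\|\bsbc_i\|_2^2$ of curvature (without it, $2\|\be_i\|_2 u - \lambda^2/(2b)\to+\infty$ once $P_H$ saturates), yet $\|\bsbX\bsbB+\bsbC\|_{\tF}^2$ supplies neither. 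Your claim that ``$\tfrac1b P_{2,H}(\bsbC)$ is itself large and drives the whole expression negative'' cannot close this: $P_{2,H}(\bsbC;\lambda)\le J\lambda^2/2$ is \emph{bounded} on each support, while the two uncontrolled inner products grow linearly in $\|\bsbC\|_{\tF}$. So the argument, as proposed, fails on a whole ray of $\Gamma_{r,J}$.

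The paper avoids this by never splitting the inner product along $(\bsbB,\bsbC)$. Instead it splits the \emph{fitted matrix} orthogonally along the row support: with $\bsbA_1=\Proj_{\bsbI_{\mathcal J}}^{\perp}\bsbX\bsbB$ and $\bsbA_2=\Proj_{\bsbI_{\mathcal J}}(\bsbX\bsbB+\bsbC)$ one has $\bsbX\bsbB+\bsbC=\bsbA_1+\bsbA_2$ and, by Pythagoras, $\|\bsbA_1\|_{\tF}^2+\|\bsbA_2\|_{\tF}^2=\|\bsbX\bsbB+\bsbC\|_{\tF}^2$, so each piece of $2\langle\bsbE,\cdot\rangle$ is paired with its own share of the quadratic; $\bsbA_1$ carries the rank-$r$ complexity $\sigma^2\{(q\wedge(n-J))r+(m-r)r+\log\binom{n}{J}\}$ and $\bsbA_2$ the $Jm$-dimensional complexity, with no cross term ever appearing. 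A second idea you are missing makes the penalty usable: rather than a pointwise row-by-row inequality for $P_H$, the paper restricts to a global minimizer $(\bsbB^o,\bsbC^o)$ of the auxiliary problem \eqref{auxopt} and invokes Lemma~\ref{lemma:phcomp}, which guarantees every nonzero row of $\bsbC^o$ has norm at least $\lambda$ (using $a\ge 2b$), so that $P_{2,H}$ coincides with $P_{2,0}=J\lambda^2/2$ there; the event for the $P_{2,H}$ functional is thereby contained in the event for the $\ell_0$ functional, and only the latter needs a concentration bound. Your ingredients for the low-rank concentration and your accounting of where $m+\log n$ comes from are fine, but without the orthogonal decomposition of $\bsbX\bsbB+\bsbC$ and the $P_{2,H}\to P_{2,0}$ reduction (or some substitute for them), the proof does not go through.
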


Let   $l_H(\bsbB, \bsbC, r) =  2\langle \bsbE, \bsbX  \bsbB + \bsbC\rangle -   \|\bsbX  \bsbB + \bsbC \|_{\tF}^2/a -       P_{2,H}(\bsbC; {\lambda})/b - aA_0 \sigma^2 r(m+q)$. Define  
$$
R=\sup_{1\leq J\leq n, 1\leq r \le m\wedge p}\, \ \sup_{(\bsbB, \bsbC)\in \bsbGamma_{r,J}} l_H(\bsbB, \bsbC, r)\mbox{.}
$$
From Lemma \ref{lemma:phostochastic},
it is easy to see $\EE R \leq a c\sigma^2$. Substituting the    bound below into \eqref{est:firstineq},
\begin{align*}
 2\langle \bsbE, \bsbX  \bsbDelta^B + \bsbDelta^C\rangle  \leq & \frac{1}{ a} \|\bsbX  \bsbDelta^B + \bsbDelta^C \|_{\tF}^2 +  \frac{1}{b}    P_{2,H}(\bsbDelta^C; {\lambda}) + 2aA_0 \sigma^2 r(m+q)+ R \\
\leq& \frac{2}{a} M(\bsbB-\bsbB^*, \bsbC - \bsbC^*) +  \frac{2}{a} M(\hat \bsbB-\bsbB^*, \hat\bsbC - \bsbC^*) \\ & +2 a A_0 \sigma^2 r(m+q) + R+   \frac{1}{b}    P_{2,H}(\bsbDelta^C; {\lambda}) ,
\end{align*}
 we have
\begin{align*}
(1-\frac{2}{a})M(\hat\bsbB-\bsbB^*, \hat\bsbC - \bsbC^*)
\leq & (1+\frac{2}{a})M(\bsbB-\bsbB^*, \bsbC - \bsbC^*) + 2aA_0 \sigma^2 r(m+q) + R \\
&+2  P(\bsbC; {\lambda}) - 2P(\hat \bsbC; {\lambda}) +  \frac{1}{b}    P_{2,H}(\bsbDelta^C; {\lambda})\mbox{.}
\end{align*}
It remains to deal with $2   P(\bsbC; {\lambda}) - 2P(\hat \bsbC; {\lambda})+    P_{2,H}(\bsbDelta^C; {\lambda})/b$ which is denoted by $I$ below.

\textit{(i) }Due to the sub-additivity of the  function $P_H$ that is concave on $[0,\infty)$,
\begin{align*}
I&\leq 2  P(\bsbC; {\lambda}) - 2P_{2,H}(\hat \bsbC; {\lambda})+ \frac{1}{b} P_{2,H}(\bsbDelta^C; {\lambda })\\
&\leq  2  P(\bsbC; {\lambda}) +  \frac{1}{b} P_{2,H}(\bsbC; {\lambda}) +   \frac{1}{b} P_{2,H}(\hat\bsbC; {\lambda }) - 2P_{2,H}(\hat \bsbC; {\lambda})\\
  &\le (2+\frac{1}{b})  P(\bsbC; {\lambda}),
\end{align*}
if $b\ge 1/2$.
Theorem 3 can be obtained by choosing $a = 4$, $b=1/2$,  and $\lambda = A \lambda^o$ with $A \ge (2 A_0)^{1/2}$. 

\textit{(ii)} When $P$ is the group $\ell_1$ penalty  as in Theorem 6,
   by the  sub-additivity of $P$, we have
   \begin{align*}
I&\leq 2P_{2,1}(\bsbC; {\lambda}) - 2P_{2,1}(\hat \bsbC; {\lambda})+ \frac{1}{b}   P_{2,1}(\bsbDelta^C; {\lambda}) \\
 &\leq  {2A\lambda^o}\{ (1 +\theta) \|\bsbDelta_{\mathcal J}^C\|_{2,1} - (1 - \theta) \|\bsbDelta_{{\mathcal J}^c}^C\|_{2,1}\}\\
&\leq  {2A(1 - \theta)\lambda^o}\{ (1 +\vartheta) \|\bsbDelta_{\mathcal J}^C\|_{2,1} -  \|\bsbDelta_{{\mathcal J}^c}^C\|_{2,1}\},
\end{align*}
where    $ \mathcal J(\bsbC)$ and   $J(\bsbC)$ are  abbreviated to $\mathcal J$, $J$, respectively, and we set    $b = 1/(2\theta)$, $\theta = \vartheta/(2+\vartheta)$. 
From the regularity condition,  $ (1 +\vartheta) \|\bsbDelta_{\mathcal J}^C\|_{2,1} -  \|\bsbDelta_{{\mathcal J}^c}^C\|_{2,1}\le K J^{1/2} \|(\bsbI - \Proj_{\bsbX \bsbDelta^B}) \bsbDelta^C\|_{\tF} \le K J^{1/2} \| {\bsbX \bsbDelta^B} + \bsbDelta^C\|_{\tF}$, and so
\begin{align*}
I & \le 2A(1-\theta) \lambda^o K J^{1/2} \| \bsbX \bsbDelta^B + \bsbDelta^C\|_{\tF}\\
&\le \frac{2}{a} M(\bsbB-\bsbB^*, \bsbC - \bsbC^*) +  \frac{2}{a} M(\hat \bsbB-\bsbB^*, \hat \bsbC - \bsbC^*) + a A^2 (1-\theta)^2 K^2 (\lambda^o)^2J \mbox{.}
\end{align*}
Taking $a = 4 + 1/\theta$, $b=1/(2\theta)$, and $A\ge (a b A_0)^{1/2}$ gives the conclusion in Theorem 6.

\subsection*{Proof of  Lemma \ref{lemma:phostochastic}}
\begin{proof}
Define
 $$
 l_H(\bsbB, \bsbC, r) =  2\langle \bsbE,  \bsbX  \bsbB + \bsbC\rangle - \frac{1}{ a} \|\bsbX  \bsbB + \bsbC \|_{\tF}^2 -  \frac{1}{b}   P_{2,H}(\bsbC; {\lambda }) - aA_0 \sigma^2 r(m+q)\mbox{.}
$$
 Similarly,   define  $l_0(\bsbB, \bsbC, r)$  with $P_{2,0}$  in place of     $P_{2,H}$ in the above. Let  $\mathcal A_H=\{\sup_{(\bsbB, \bsbC)\in \bsbGamma_{r, J}} \allowbreak l_H(\bsbB, \bsbC, r)\allowbreak\geq at\sigma^2\}$, and $\mathcal A_0=\{\sup_{(\bsbB, \bsbC)\in \bsbGamma_{r, J}} l_0(\bsbB, \bsbC, r)\geq at\sigma^2\}$.

Since   $\mathcal A_H\subset \{\sup_{(\bsbB, \bsbC):r(B) \leq r} l_H(\bsbB, \bsbC, r) \geq a t\sigma^2\}$, the occurrence of  $\mathcal A_H$ implies that 
\begin{align}
 l_H(\bsbB^o, \bsbC^o, r) \geq a t \sigma^2,\label{auxbnd}
 \end{align}
  for any  $(\bsbB^o ,\bsbC^o) $ that solves
  \begin{align}
 \min_{\bsbB: r(\bsbB)\leq r, \bsbC}  \frac{1}{ a } \|\bsbX  \bsbB + \bsbC \|_{\tF}^2 -2\langle \bsbE, \bsbX  \bsbB + \bsbC\rangle +  \frac{1}{b}   P_{2,H}(\bsbC; {\lambda }) \mbox{.} \label{auxopt}
  \end{align}

  \begin{lemma} \label{lemma:phcomp}
Given any $\theta\ge 1$, there exists a globally optimal solution $\bsbC^o$ to
$
\min_{\bsbC}  \| \bsbY - \bsbC\|_{\tF}^2 /2 + \theta P_{2,H}(\bsbC; {\lambda})
$
such that for any $i: 1\leq i \leq n$, either $\bsb{c}_i^o=\bsb{0}$ or $\|\bsb{c}_i^o\|_2 \geq \lambda  {\theta}^{1/2}\ge \lambda$.
\end{lemma}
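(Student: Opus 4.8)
The plan is to exploit the separable structure of the objective. Since $\|\bsbY-\bsbC\|_{\tF}^2=\sum_{i=1}^n\|\bsby_i-\bsbc_i\|_2^2$ and $P_{2,H}(\bsbC;\lambda)=\sum_{i=1}^n P_H(\|\bsbc_i\|_2;\lambda)$, where $\bsby_i^\T,\bsbc_i^\T$ denote the $i$th rows of $\bsbY,\bsbC$, the objective decouples across rows, so it suffices to solve, for each $i$, the vector problem $\min_{\bsbc\in\mathbb R^m}\tfrac12\|\bsby_i-\bsbc\|_2^2+\theta P_H(\|\bsbc\|_2;\lambda)$ and exhibit a minimizer whose norm is either $0$ or at least $\lambda\theta^{1/2}$. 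First I would reduce this to one dimension: because $P_H$ depends on $\bsbc$ only through $\|\bsbc\|_2$, for any fixed norm $t=\|\bsbc\|_2>0$ the loss $\|\bsby_i-\bsbc\|_2^2$ is minimized by aligning $\bsbc$ with $\bsby_i$, i.e. $\bsbc=t\,\bsby_i/\|\bsby_i\|_2$ when $\bsby_i\neq\bsb0$ (and $\bsbc=\bsb0$ when $t=0$). Writing $s=\|\bsby_i\|_2$, the task becomes $\min_{t\ge0}h(t)$ with $h(t)=\tfrac12(s-t)^2+\theta P_H(t;\lambda)$.

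Next I would analyze the piecewise structure of $h$. On $[0,\lambda)$ one has $P_H(t;\lambda)=-t^2/2+\lambda t$, so $h''(t)=1-\theta\le0$ because $\theta\ge1$; hence $h$ is concave there and attains its minimum over $[0,\lambda]$ at an endpoint, $t=0$ or $t=\lambda$. On $[\lambda,\infty)$ the penalty is the constant $\lambda^2/2$, so $h(t)=\tfrac12(s-t)^2+\theta\lambda^2/2$ is a convex quadratic minimized at $t=\max(s,\lambda)$. Consequently any global minimizer of $h$ lies in the finite set $\{0,\lambda,\max(s,\lambda)\}$, and this further reduces to $\{0,\max(s,\lambda)\}$ since $h(\lambda)\ge h(\max(s,\lambda))$.

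Finally I would perform the value comparison. A short computation gives $h(\lambda)-h(0)=\lambda^2(1+\theta)/2-s\lambda$, which is nonnegative whenever $s\le\lambda$ (using $\theta\ge1$); so for $s\le\lambda$ the minimizer may be taken to be $t=0$. When $s>\lambda$ the remaining candidates have values $h(0)=s^2/2$ and $h(s)=\theta\lambda^2/2$, and $h(s)\le h(0)$ iff $s\ge\lambda\theta^{1/2}$; thus the minimizer is $t=0$ for $\lambda<s<\lambda\theta^{1/2}$ and $t=s\ge\lambda\theta^{1/2}$ for $s\ge\lambda\theta^{1/2}$. In every case there is a minimizing $t$ lying in $\{0\}\cup[\lambda\theta^{1/2},\infty)$, and assembling the corresponding rows $\bsbc_i^o$ produces the desired $\bsbC^o$.

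I expect no serious obstacle, as the argument is elementary once the separation and the alignment reduction are made; the only care points are keeping the concavity/convexity bookkeeping on the two pieces straight and handling the boundary case $s=\lambda\theta^{1/2}$, where $t=0$ and $t=\lambda\theta^{1/2}$ tie. Since the statement asserts only existence, I would break that tie in favor of the nonzero value $\lambda\theta^{1/2}$, which still satisfies the claimed lower bound.
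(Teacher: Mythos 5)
Your proof is correct. The row-wise separation, the alignment reduction to the scalar problem $\min_{t\ge 0}\tfrac12(s-t)^2+\theta P_H(t;\lambda)$ with $s=\|\bsby_i\|_2$, the concavity of the objective on $[0,\lambda]$ (so the minimum there is at an endpoint), and the final comparison $h(0)=s^2/2$ versus $h(\max(s,\lambda))$ giving the dichotomy $t=0$ or $t=s\ge\lambda\theta^{1/2}$ are all sound, and the degenerate case $\bsby_i=\bsb0$ and the tie at $s=\lambda\theta^{1/2}$ are handled appropriately. The paper itself does not prove this lemma but defers to She (2012); your argument is a complete, self-contained reconstruction of exactly the kind of elementary endpoint-comparison proof that citation supplies, so there is nothing to add.
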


See \cite{She2012} for its proof. From Lemma \ref{lemma:phcomp} and $a \geq 2 b$, \eqref{auxbnd} further indicates that there exists an optimal solution $(\bsbB^o, \bsbC^o)$ such that $l_0(\bsbB^o,\bsbC^o, r) \geq a t \sigma^2$. Hence  $\mathcal A_H \subset \mathcal A_0$ and it suffices to show $\EP (\mathcal A_0) \le C \exp(-ct)$. 

Let $\mathcal J = \mathcal J (\bsbC)$ for short. Denote by $\bsbI_{\mathcal J}$ the submatrix of $\bsbI_{n\times n}$ formed by the columns indexed by $\mathcal J$. We write the stochastic term into
\begin{align}
2\langle \bsbE, \bsbX \bsbB+\bsbC\rangle \notag
= \, &2\langle \bsbE, \Proj_{\bsbI_{{\mathcal J}}}^{\perp} \bsbX \bsbB\rangle+ 2\langle \bsbE, \Proj_{\bsbI_{{\mathcal J}}}  ( \bsbX \bsbB + \bsbC)  \rangle \notag\\
 \equiv \,& 2\langle \bsbE, \bsbA_1  \rangle + 2\langle \bsbE, \bsbA_2\rangle, \label{finedecomp}
\end{align}
and $\| \bsbA_1\|_{\tF}^2 + \|\bsbA_2\|_{\tF}^2 = \|\bsbX \bsbB + \bsbC\|_{\tF}^2$.

\begin{lemma}\label{concenGauss}
Given $\bsbX\in \mathbb R^{n\times p}$, $1\leq J\leq n$, $1\leq r \leq m\wedge p$,  define $\Gamma_{r, J}^1 = \{\bsbA\in \mathbb R^{n\times m}: \|\bsbA\|_{\tF}\leq 1, r(\bsbA) \leq r, CS(\bsbA) \subset CS\{\bsbX({\mathcal J}^c, :)\} \mbox{ for some } \mathcal J: | \mathcal J|=J\}$. Let $$P_o^1(J, r) = \sigma^2 \left[\{q\wedge  (n-J) \} r+(m - r) r + \log {n\choose J}\right]\mbox{.}$$
Then for any $t\geq 0$,
\begin{align}
\EP \Big[\sup_{\bsbA \in \Gamma_{r,J}^1} \langle \bsbE, \bsbA \rangle \geq t \sigma +   \{L   P_o^1(J,r)\}^{1/2}\Big] \leq c'\exp(- ct^2),
\end{align}
where $L,    c, c'>0$ are universal constants.
\end{lemma}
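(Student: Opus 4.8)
The plan is to recognize $\bsbA\mapsto\langle\bsbE,\bsbA\rangle$ as a Gaussian process indexed by the set $\Gamma_{r,J}^1$, reduce its supremum to a top-$r$ Frobenius norm of a \emph{projected} Gaussian matrix, bound the expected supremum by $\{LP_o^1(J,r)\}^{1/2}$ via a union bound over the $\binom{n}{J}$ row-sets, and then obtain the $\sigma t$ deviation with a single application of Gaussian concentration. The expectation bound is where the three summands of $P_o^1$ are produced; the deviation term is essentially free.

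First I would exploit the subspace constraint for a fixed $\mathcal J$. Let $V_{\mathcal J}=CS\{\bsbX(\mathcal J^c,:)\}$ with orthonormal basis $\bsbU_{\mathcal J}$ and dimension $d_{\mathcal J}\le q\wedge(n-J)$. Every $\bsbA$ with $CS(\bsbA)\subset V_{\mathcal J}$, $r(\bsbA)\le r$, $\|\bsbA\|_{\tF}\le1$ factors as $\bsbA=\bsbU_{\mathcal J}\bsbW$ with $\|\bsbW\|_{\tF}\le1$, $r(\bsbW)\le r$, so $\langle\bsbE,\bsbA\rangle=\langle\bsbU_{\mathcal J}^\T\bsbE,\bsbW\rangle$, and by the variational form of the singular values $\sup_{\|\bsbW\|_{\tF}\le1,\,r(\bsbW)\le r}\langle\bsbU_{\mathcal J}^\T\bsbE,\bsbW\rangle=\|\bsbG_{\mathcal J}\|_{(r)}:=(\sum_{i=1}^r\sigma_i^2)^{1/2}$, where $\sigma_i$ are the singular values of $\bsbG_{\mathcal J}:=\bsbU_{\mathcal J}^\T\bsbE$. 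Because $\bsbU_{\mathcal J}$ has orthonormal columns, $\bsbG_{\mathcal J}$ is a $d_{\mathcal J}\times m$ matrix with i.i.d. $N(0,\sigma^2)$ entries. Hence $\sup_{\bsbA\in\Gamma_{r,J}^1}\langle\bsbE,\bsbA\rangle=\max_{|\mathcal J|=J}\|\bsbG_{\mathcal J}\|_{(r)}$.

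Next I would control $\EE\max_{|\mathcal J|=J}\|\bsbG_{\mathcal J}\|_{(r)}$. For a single $\mathcal J$, note that $\bsbG_{\mathcal J}$ has at most $d_{\mathcal J}$ nonzero singular values, so $\|\bsbG_{\mathcal J}\|_{(r)}=\|\bsbG_{\mathcal J}\|_{(r\wedge d_{\mathcal J})}$; a covering-number bound for the rank-$(r\wedge d_{\mathcal J})$ unit Frobenius ball in $\mathbb R^{d_{\mathcal J}\times m}$, whose intrinsic dimension is $(r\wedge d_{\mathcal J})(d_{\mathcal J}+m-r\wedge d_{\mathcal J})$, together with Dudley's inequality gives $\EE\|\bsbG_{\mathcal J}\|_{(r)}\lesssim\sigma\{r(d_{\mathcal J}+m-r)\}^{1/2}\le\sigma[\{q\wedge(n-J)\}r+(m-r)r]^{1/2}$. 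Since $\bsbE\mapsto\|\bsbU_{\mathcal J}^\T\bsbE\|_{(r)}$ is $1$-Lipschitz in the Frobenius metric, Gaussian concentration yields a sub-Gaussian tail for $\|\bsbG_{\mathcal J}\|_{(r)}$ about this mean with scale $\sigma$. A union bound over the $\binom{n}{J}$ choices of $\mathcal J$ and integration of the resulting tail contribute the extra $\sigma\{\log\binom{n}{J}\}^{1/2}$, so that $\EE\max_{\mathcal J}\|\bsbG_{\mathcal J}\|_{(r)}\le\{LP_o^1(J,r)\}^{1/2}$ for a universal $L$.

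Finally, the deviation is handled in one stroke: $f(\bsbE):=\sup_{\bsbA\in\Gamma_{r,J}^1}\langle\bsbE,\bsbA\rangle$ is $1$-Lipschitz in $\bsbE$ because $\|\bsbA\|_{\tF}\le1$ on the index set. Writing $\bsbE=\sigma\bsbZ$ with $\bsbZ$ standard Gaussian and applying the Borell--TIS inequality gives $\EP[f(\bsbE)\ge\EE f(\bsbE)+\sigma t]\le\exp(-t^2/2)$; substituting the expectation bound yields the claim with $c=1/2$ and $c'=1$. I expect the main obstacle to be the expectation step: one must generate all three additive pieces of $P_o^1$ with a single universal constant, which forces careful use of the effective rank $r\wedge d_{\mathcal J}$ and of the inequality $d_{\mathcal J}\le q\wedge(n-J)$ in the per-subspace Gaussian-width estimate, and then absorb the $\binom{n}{J}$-fold union bound into the $\log\binom{n}{J}$ term without degrading the $\exp(-ct^2)$ rate. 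The subspace reduction and the closing Lipschitz concentration are comparatively routine.
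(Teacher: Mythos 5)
Your proposal is correct, and it follows the standard route that the paper itself gestures at: the paper omits this proof, stating only that it ``follows similar lines of the proof of Lemma 4 in She (2016b),'' and that argument is precisely the one you give --- fix the row-support $\mathcal J$, reduce the supremum to the Ky Fan $(2,r)$-type norm of the projected Gaussian matrix $\bsbU_{\mathcal J}^\T\bsbE$, bound its expectation by the Gaussian width $\sigma\{r(d_{\mathcal J}+m-r)\}^{1/2}$ of the rank-$r$ unit Frobenius ball, absorb the union over the $\binom{n}{J}$ supports into the $\log\binom{n}{J}$ term, and close with Borell--TIS using the $1$-Lipschitz property of the supremum. The one point worth keeping explicit in a write-up is the interpretation of $CS\{\bsbX(\mathcal J^c,:)\}$ as a subspace of $\mathbb R^n$ of dimension $d_{\mathcal J}\le q\wedge(n-J)$ (the column space of $\bsbX$ with the rows in $\mathcal J$ zeroed out), which you use correctly.
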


The proof follows similar lines of the proof of Lemma 4 in \cite{She2016b} and is omitted.
Now,
we can bound the   the first term on the right hand side of \eqref{finedecomp} as follows

\begin{align*}
&2\langle \bsbE, \bsbA_1  \rangle - \frac{1}{a}\| \bsbA_1\|_{\tF}^2 - 2a L P_o^1(J,r) \\
\le & 2\langle \bsbE, \bsbA_1/\|\bsbA_1\|_{\tF}  \rangle \|\bsbA_1\|_{\tF}  -2 \| \bsbA_1\|_{\tF} \{ LP_o^1(J,r)\}^{1/2}  - \frac{1}{2a}\| \bsbA_1\|_{\tF}^2 \\
 \le & 2a \left[\langle \bsbE, \bsbA_1/\|\bsbA_1\|_{\tF}  \rangle   - \{  LP_o^1(J,r)\}^{1/2}\right ]_+^2 + \frac{1}{2a}\| \bsbA_1\|_{\tF}^2- \frac{1}{2a}\| \bsbA_1\|_{\tF}^2\\
=&2a  \left[\langle \bsbE, \bsbA_1/\|\bsbA_1\|_{\tF}  \rangle   - \{  LP_o^1(J,r)\}^{1/2}\right ]_+^2\mbox{.}
\end{align*}
By Lemma \ref{concenGauss}, for $L$ large enough,
$$
\EP \{2\langle \bsbE, \bsbA_1  \rangle - \frac{1}{a}\| \bsbA_1\|_{\tF}^2 - 2a L P_o^1(J,r) > \frac{1}{2}a t\sigma^2\}\ \leq c' \exp(-c t)\mbox{.}
$$
Similarly,  for the second term on the right hand side of \eqref{finedecomp}, $$
\EP \{2\langle \bsbE, \bsbA_2  \rangle - \frac{1}{a}\| \bsbA_2\|_{\tF}^2 - 2a L P_o^2(J,r) > \frac{1}{2}a t\sigma^2\}\ \leq c' \exp(-c t),
$$
 where $$P_o^2 (J, r) = \sigma^2\left\{J m + \log {n \choose J}\right\},$$ and $L$ is a large constant. Applying  the union bound gives
\begin{align}
 & \EP [2\langle \bsbE, \bsbX \bsbB + \bsbC\rangle - \frac{1}{a}\|\bsbX \bsbB + \bsbC\|_{\tF}^2 - 2a L \sigma^2\{ (q+m-r)r + Jm + J\log(en/J) \}>  a t\sigma^2]\ \notag \\ \leq \ &  c' \exp(-c t).\label{stobndinlemma}
\end{align}
The conclusion follows.
\end{proof}

\subsection{Proof of Theorem 4}
 Similar to   Section \ref{proof_oraclel0l1}, we have
\begin{align*}
\frac{1}{2}M( \hat \bsbB -  \bsbB^*, \hat \bsbC -  \bsbC^*)\leq \frac{1}{2}M( \bsbB -  \bsbB^*, \hat \bsbC -  \bsbC^*)  + \langle \bsbE, \bsbX \bsbDelta^B + \bsbDelta^C \rangle,  
\end{align*}
where   $\bsbDelta^B=\hat \bsbB -  \bsbB$, $\bsbDelta^C=\hat \bsbC -  \bsbC$. Let $\tilde r = r( \bsbDelta^B)$ and $\tilde J =  J(\bsbDelta^C)$.
Then from \eqref{stobndinlemma} in the proof of Lemma \ref{lemma:phostochastic}, $$
2\langle \bsbE, \bsbX \bsbDelta^B + \bsbDelta^C\rangle \le  \frac{1}{a}\|\bsbX \bsbDelta^B + \bsbDelta^C\|_{\tF}^2 - 2a L \sigma^2\{ (q+m)\tilde r + \tilde J m + \tilde J\log(en/\tilde J)\}\ + R,
$$
where $ER\le ac \sigma^2$. The oracle inequality can be shown following the lines of Section \ref{proof_oraclel0l1},   noticing that   $\tilde r \leq 2r$, $\tilde J \le 2\outl$ and $\tilde J\log(2en/\tilde J) \le 2 \outl \log (en/\outl)$.

\subsection{Proof of Theorem 5}
The proof is based on the general reduction scheme  in Chapter 2 of \cite{tsybakov2009}.
We consider two cases.

\textit{Case (i)} $(q +m ) r \geq Jm+  J\log (e n / J)$. Suppose the SVD of $\bsbX$ is    $\bsbX = \bsbU \bsbD \bsbV^\T$ with $\bsbD$ of size $q\times q$.  Given  an arbitrary  estimator $(\hat \bsbB, \hat \bsbC)$, let  $\hat \bsbA= \bsbV^\T \hat\bsbB$ and   $\tilde {\mathcal S}(r, J) =\{(\bsbA, \bsbC) \in \mathbb R^{q\times m}\times \mathbb R^{n\times m}: r(\bsbA)\leq r, J(\bsbC)\leq J\}$. Then
 \begin{align*}
&\sup_{(\bsbB^*, \bsbC^*) \in \mathcal S(r, J)} \EP\{\|\bsbX \bsbB^* - \bsbX \hat\bsbB + \bsbC^* - \hat\bsbC\|_{\tF}^2\geq c {P_o(J, r)}\}\\
\geq & \sup_{ (\bsbA^*, \bsbC^*) \in  \tilde{\mathcal S}(r, J)} \EP\{\|\bsbU \bsbD\bsbA^* - \bsbU\bsbD \hat\bsbA + \bsbC^* - \hat\bsbC\|_{\tF}^2\geq c {P_o(J, r)}\},
\end{align*} because  for any $\bsbA:r(\bsbA)\leq r$,  $ \bsbB = \bsbV \bsbA$ satisfies $r(\bsbB)\leq r$. The new design matrix   $ \bsbU\bsbD$    has $q$ columns, and it is easy to see that  for any $\bsbA\in\mathbb R^{q\times m}$, \begin{align}\underline{\kappa} \|\bsbA\|_{\tF}^2 \le \| \bsbU \bsbD \bsbA\|_{\tF}^2\le \overline{\kappa} \|\bsbA\|_{\tF}^2, \label{condinumber}
\end{align} where $\underline{\kappa} = \sigma_{\min}^2(\bsbX)$ and $\overline{\kappa} = \sigma_{\max}^2(\bsbX)$ as defined in the theorem.  Therefore, without any loss of generality we assume $\bsbX \in \mathbb R^{n\times q}$ and  and    $\bsbB\in \mathbb R^{q\times m}$  in the rest of the proof.

Consider a signal  subclass \begin{align*}
{\mathcal B}^1(r)=\{\bsbB = (b_{jk}),  \bsbC=\bsb{0}: & \ b_{jk}\in \{0, \gamma R\} \mbox{ if }  (j,k)\in[q]\times[ r/2]\cup [r/2]\times[ m] \\ & \mbox{   }   b_{jk}=0 \mbox{ otherwise} \}.
\end{align*}
where $R=  {\sigma}/({{\overline{\kappa}^{1/2}} })$,  and $\gamma>0$ is a small constant to be chosen later.
Clearly,  $|{\mathcal B}^1 (r)|= 2^{(q+m-r/2)r/2}$, $\mathcal B^1 (r)\subset \mathcal S(r, J)$, and $r(\bsbB_1 - \bsbB_2)\leq r$, for any $  \bsbB_1, \bsbB_2 \in \mathcal B^1(r)$. Also, since $r\le q\wedge m$, $(q+m-r/2)r/2\geq c(q+m)r$ for some   constant $c$.

Let $\rho(\bsbB_1, \bsbB_2)=\|\vect(\bsbB_1) - \vect(\bsbB_2)\|_0$,  the Hamming distance between $\vect(\bsbB_1) $ and $\vect(\bsbB_2)$. By the  Varshamov-Gilbert bound,  cf. Lemma 2.9 in \cite{tsybakov2009},
there exists  a subset ${\mathcal B}^{10}(r)\subset {\mathcal B}^{1}(r)$ such that
\begin{eqnarray*}
\log | {\mathcal B}^{10}(r)| \geq c_1 r(q+m), \quad
\rho(\bsbB_1, \bsbB_2) \geq c_2 r(q+m), \bsbB_1, \bsbB_{2} \in \mathcal B^{10},  \bsbB_1\neq \bsbB_2
\end{eqnarray*}
for   some universal constants $c_1, c_2>0$.
Then $\| \bsbB_1 - \bsbB_2\|_{\tF}^2 = \gamma^2 R^2 \rho(\bsbB_1, \bsbB_2) \geq c_2 \gamma^2 R^2 (q+m) r$. It follows from \eqref{condinumber} that
\begin{align}
\| \bsbX \bsbB_1 - \bsbX \bsbB_2  \|_{\tF}^2 \geq c_2 \underline{\kappa}   \gamma^2 R^2 (q+m)r \label{separationLBound}
\end{align}
 for any $\bsbB_1, \bsbB_{2} \in \mathcal B^{10}$,  $\bsbB_1\neq \bsbB_2$, where $\underline{\kappa}/\overline{\kappa}$ is a positive constant.

For Gaussian models, the  Kullback-Leibler divergence of $\mathcal {M N}( \bsbX \bsbB_2,\sigma^2 \bsb{I}\otimes\bsbI)$,   denoted by $P_{\bsbB_2}$, from $\mathcal {MN}( \bsbX \bsbB_1),\sigma^2 \bsb{I}\otimes \bsbI)$,  denoted by $P_{\bsbB_1}$, is $$\mathcal K(\mathcal P_{\bsbB_1}, \mathcal P_{\bsbB_2}) = \frac{1}{2\sigma^2} \|\bsbX \bsbB_1 -  \bsbX \bsbB_2 \|_{\tF}^2\mbox{.}$$ Let $P_{\bsb{0}}$ be $\mathcal {M N}(\bsb{0}, \sigma^2 \bsb{I}\otimes \bsbI)$. By \eqref{condinumber}  again,
for any $\bsbB: r(\bsbB)\leq r$, we have
\begin{align*}
\mathcal K(P_{\bsb{0}}, P_{\bsbB}) \leq \frac{1}{2\sigma^2}\overline{\kappa}  \gamma^2 R^2 \rho(\bsb{0}, \bsbB) \leq \frac{\gamma^2}{\sigma^2}\overline{\kappa}   R^2 (q+m)r,
\end{align*}
where we used $\rho(\bsbB_1, \bsbB_2) \leq r(q+m)$. Therefore,
\begin{align}
\frac{1}{|\mathcal B^{10}|}\sum_{\bsbB\in \mathcal B^{10}} \mathcal K(P_{\bsb{0}}, P_{\bsbB})\leq \gamma^2 r(q+m)\mbox{.} \label{KLUBound}
\end{align}

Combining  \eqref{separationLBound} and \eqref{KLUBound} and choosing a sufficiently small value for  $\gamma$, we can apply Theorem 2.7 of \cite{tsybakov2009} to get the desired  lower bound.

\textit{Case (ii)} $(q+m)r < Jm+J \log (e n /J) $.
Define a signal subclass
\begin{align*}
{\mathcal B}^2(J)=&\{\bsbB , \bsbC=(\bsbc_{1}, \ldots, \bsbc_n)^\T:   \bsbB= \bsb{0}, \bsbc_i = \bsb{0}  \mbox{ or }  \gamma R   (\bsb{1}^\T, \bsb{b}^\T)^\T \\&\mbox{ with } \bsb1=(1 \ \ldots \ 1)^\T\in \mathbb R^{m -\lceil m/2\rceil },   \bsb{b}\in \mathbb \{0,1\}^{\lceil m/2\rceil},   J(\bsbC)\leq J \}\mbox{.}
\end{align*}
where $$R= \frac{\sigma}{{\overline{\kappa}^{1/2}}  } \left\{1+\frac{\log (e n /J)}{m}\right\}^{1/2}, $$ and $\gamma>0$ is a small constant.
 Clearly,  ${\mathcal B}^{2}(J) \subset\mathcal{S}(r,J) $.
By Stirling's approximation, $$\log |\mathcal B^{2}(J)|\geq \log { n \choose J} +  \log 2 ^{J m/2}\geq   J \log (n/J) + J m (\log 2)/2 \geq c \{J \log ( en/J)+Jm\}$$ for some universal constant $c$.
Applying    Lemma 8.3 in   \cite{Rigollet11} and the Varshamov-Gilbert  bound,  there exists  a subset ${\mathcal B}^{20}(J)\subset {\mathcal B}^{2}(J)$ such that
\begin{eqnarray*}
\log | {\mathcal B}^{20}(J)| \geq c_1 \{J \log ( e n/J) + J m\}
\mbox{ and }
\rho(\bsbB_1, \bsbB_2) \geq c_2 Jm, \forall \bsbB_1, \bsbB_{2} \in \mathcal B^{20},  \bsbB_1\neq \bsbB_2
\end{eqnarray*}
for some universal constants $c_1, c_2>0$.
The afterward treatment follows the same lines as in (i) and the details are omitted.

\subsection{Proof of Theorem 7}

The first conclusion follows  from the block coordinate descent   design and     the optimality of the multivariate thresholding for solving the $\bsbC$-optimization problem \citep{She2012}.

When the continuity condition holds, $\vec \Theta(\bsbY - \bsbX \bsbB; \lambda)$ is the unique minimizer of    $ \min_{\bsbC} F(\bsbB, \bsbC)  $; see Lemma 1  of \cite{She2012}. But  in general,  the problem of    $\min_{\bsbB} F(\bsbB, \bsbC)$ subject  to $r(\bsbB)\le r$ may not have a   unique solution. The accumulation point result is an application of Zangwill's Global Convergence Theorem \citep{luenberger08}, and the proof    proceeds along similar   lines of the proof of Theorem 7 of  \cite{bunea2012}.
 The details are omitted.

To get the stationarity guarantee when $q(\cdot;\lambda)\equiv 0$, we can write the problem as $\min  \| \bsbY - \bsbX \bsbS \bsbV^\T - \bsbC\|_{\tF}^2/2 + \sum_{i=1}^n P_{\Theta} (\| \bsbc_i\|_2;\lambda)$ subject to $(\bsbS, \bsbV, \bsbC) \in \mathbb R^{p\times r} \times \mathbb O^{m\times r} \times \mathbb R^{n\times m}$, where $\mathbb O^{m\times r}=\{\bsbV\in \mathbb R^{m\times r}: \bsbV^\T \bsbV = \bsbI\}$. Then one can view the problem as an unconstrained one on the    manifold $\mathbb R^{p\times r} \times \mathbb O^{m\times r} \times \mathbb R^{n\times m}$, and  define  the Remannian gradient with respect to $\bsbV$;  see Theorem 6 of \cite{bunea2012} for more detail.

\subsection{Proof of Theorem 8}

First,    by a bit of algebra we have the following result.
\begin{lemma}\label{A6}
 For any $(\hat \bsbB, \hat \bsbC)$ defined in the theorem, we have  $$(\hat \bsbB, \hat \bsbC) \in \arg \min_{(\bsbB, \bsbC)  } g(\bsbB, \bsbC; \bsbB^- , \bsbC^-)|_{\bsbB^- = \hat \bsbB, \bsbC^-=\hat \bsbC} \ \mbox{ s.t. } r(\bsbB)\le r,$$ where  $g$ is constructed by $g(\bsbB, \bsbC; \bsbB^- , \bsbC^-) =  l(\bsbB^-, \bsbC^-) + P_{2,\Theta}(\bsbC;\lambda)+ \langle \bsbX \bsbB^- + \bsbC^- -\bsbY, \bsbX \bsbB - \bsbX \bsbB^- + \bsbC   - \bsbC^-\rangle \allowbreak + \allowbreak\| \bsbX \bsbB - \bsbX \bsbB^-\|_{\tF}^2/2+ \|  \bsbC -  \bsbC^-\|_{\tF}^2/2$, with  $l(\bsbB, \bsbC) = \|\bsbX \bsbB + \bsbC - \bsbY\|_{\tF}^2/2$ and $P_{2,\Theta}(\bsbC;\lambda)=\sum_{i=1}^n P_{\Theta}(\|\bsbc_i\|_2;\lambda)$.
\end{lemma}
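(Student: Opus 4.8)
The plan is to show that the surrogate $g(\cdot,\cdot;\hat\bsbB,\hat\bsbC)$ separates into independent $\bsbB$- and $\bsbC$-pieces, and then to recognize each piece as exactly the subproblem solved by the corresponding fixed-point equation defining $(\hat\bsbB,\hat\bsbC)$. First I would reorganize $g$ by completing the square. Writing the reference residual $\bsbR^-=\bsbX\bsbB^-+\bsbC^--\bsbY$ and using $l(\bsbB^-,\bsbC^-)=\|\bsbR^-\|_{\tF}^2/2$, the two inner-product terms $\langle\bsbR^-,\bsbX\bsbB-\bsbX\bsbB^-\rangle$ and $\langle\bsbR^-,\bsbC-\bsbC^-\rangle$ each pair off with one of the quadratics $\|\bsbX\bsbB-\bsbX\bsbB^-\|_{\tF}^2/2$ and $\|\bsbC-\bsbC^-\|_{\tF}^2/2$. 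The crucial feature, which is precisely the point of the construction, is that $g$ carries \emph{no} cross term $\langle\bsbX(\bsbB-\bsbB^-),\bsbC-\bsbC^-\rangle$ — this is exactly the coupling term that the genuine quadratic loss $l(\bsbB,\bsbC)$ would produce, and its deliberate omission is what decouples the two blocks. Collecting terms and simplifying $\bsbX\bsbB^--\bsbR^-=\bsbY-\bsbC^-$ and $\bsbC^--\bsbR^-=\bsbY-\bsbX\bsbB^-$, one obtains, up to the additive constant $-\|\bsbR^-\|_{\tF}^2/2$,
\[
g(\bsbB,\bsbC;\bsbB^-,\bsbC^-)=\tfrac12\|\bsbX\bsbB-(\bsbY-\bsbC^-)\|_{\tF}^2+\tfrac12\|\bsbC-(\bsbY-\bsbX\bsbB^-)\|_{\tF}^2+P_{2,\Theta}(\bsbC;\lambda).
\]

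The second step is to minimize the two decoupled subproblems separately. The $\bsbB$-part $\|\bsbX\bsbB-(\bsbY-\bsbC^-)\|_{\tF}^2/2$ subject to $r(\bsbB)\le r$ is exactly a reduced-rank regression of the adjusted response $\bsbY-\bsbC^-$ on $\bsbX$, whose global minimizer is $\mathcal R(\bsbX,\bsbY-\bsbC^-,r)$ by the closed form \eqref{rrr-sol}. The $\bsbC$-part $\|\bsbC-(\bsbY-\bsbX\bsbB^-)\|_{\tF}^2/2+P_{2,\Theta}(\bsbC;\lambda)$ is unconstrained and separates over the rows of $\bsbC$, so by Lemma \ref{uniqsol-gen-grp} (applied with $q\equiv0$, i.e.\ with the penalty $P_\Theta$ itself) a global minimizer is $\vec\Theta(\bsbY-\bsbX\bsbB^-;\lambda)$ applied row by row. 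Since the rank constraint involves $\bsbB$ alone, the joint minimizer of $g$ over $\{r(\bsbB)\le r\}$ is simply the pair formed by these two separate minimizers.

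Finally I would specialize to $\bsbB^-=\hat\bsbB$, $\bsbC^-=\hat\bsbC$ and invoke the defining equations of the theorem: $\hat\bsbB=\mathcal R(\bsbX,\bsbY-\hat\bsbC,r)$ minimizes the $\bsbB$-part and $\hat\bsbC=\vec\Theta(\bsbY-\bsbX\hat\bsbB;\lambda)$ minimizes the $\bsbC$-part, so $(\hat\bsbB,\hat\bsbC)$ is a global minimizer of $g(\cdot,\cdot;\hat\bsbB,\hat\bsbC)$ subject to $r(\bsbB)\le r$, which is the assertion. The argument is purely algebraic, so the only real care needed is the bookkeeping in the first step: one must verify that every linear and quadratic contribution is absorbed into the two completed squares with no surviving $\bsbB$–$\bsbC$ coupling, and that the additive constant $-\|\bsbR^-\|_{\tF}^2/2$ is free of $(\bsbB,\bsbC)$ and hence irrelevant to the argmin. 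I do not expect any genuine obstacle beyond this verification; notably the continuity of $\vec\Theta$ at $\bsbY-\bsbX\hat\bsbB$ assumed in the theorem is not required here, since Lemma \ref{uniqsol-gen-grp} already certifies the global optimality of $\hat\bsbC$.
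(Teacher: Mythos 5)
Your proposal is correct and is precisely the ``bit of algebra'' the paper leaves implicit: completing the square turns $g(\cdot,\cdot;\hat\bsbB,\hat\bsbC)$ into the sum of a rank-constrained least-squares problem in $\bsbB$ with adjusted response $\bsbY-\hat\bsbC$ and a row-separable thresholding problem in $\bsbC$, and the two fixed-point equations defining $(\hat\bsbB,\hat\bsbC)$ certify global optimality of each block. Your side remark that continuity of $\vec\Theta$ is not needed at this stage is also accurate, since Lemma~\ref{uniqsol-gen-grp} guarantees global optimality (not uniqueness) without it.
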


The following result can be obtained from   Lemma 2 in \cite{She2012}.
\begin{lemma}\label{A7}
 Let $Q (\bsbC)= \|\bsbC - \bsbY\|_{\tF}^2/2 + P_{2,\Theta}(\bsbC;\lambda)$ and $\bsbC^o=\vec\Theta(\bsbY;\lambda)$. Assume that $\vec\Theta$ is continous at $\bsbY$.  Then   for any $\bsbC$,
$
Q(\bsbC )-Q (\bsbC^o) \geq {( 1-{\mathcal L}_{\Theta})} \|\bsbC - \bsbC^o \|_{\tF}^2/2
$.
\end{lemma}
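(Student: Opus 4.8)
The plan is to exhibit $Q$ as a separable sum over the rows of $\bsbC$ and to show that each summand is strongly convex with modulus exactly $1-\mathcal{L}_\Theta$; the claimed gap bound then drops out of the elementary characterization of a minimizer of a strongly convex function. Writing $\bsby_i$ and $\bsbc_i$ for the $i$th rows of $\bsbY$ and $\bsbC$, we have
\begin{equation*}
Q(\bsbC)=\sum_{i=1}^n q_i(\bsbc_i),\qquad q_i(\bsbc)=\tfrac12\|\bsbc-\bsby_i\|_2^2+P_\Theta(\|\bsbc\|_2;\lambda),
\end{equation*}
and, by Definition \ref{def:thresholdmulti}, the rows of $\bsbC^o=\vec\Theta(\bsbY;\lambda)$ are $\bsbc_i^o=\vec\Theta(\bsby_i;\lambda)$, with $\|\bsbC-\bsbC^o\|_{\tF}^2=\sum_i\|\bsbc_i-\bsbc_i^o\|_2^2$. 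Lemma \ref{uniqsol-gen-grp} tells us $\bsbc_i^o$ is a global minimizer of $q_i$. Hence it suffices to prove the per-row inequality $q_i(\bsbc)-q_i(\bsbc_i^o)\ge \tfrac{1}{2}(1-\mathcal{L}_\Theta)\|\bsbc-\bsbc_i^o\|_2^2$ and sum over $i$.

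Set $\mu=1-\mathcal{L}_\Theta=\essinf\{\rd\Theta^{-1}(u;\lambda)/\rd u:u\ge0\}\ge0$, the last bound holding because $\Theta^{-1}(\cdot;\lambda)$ is nondecreasing. I would establish $\mu$-strong convexity of $\phi(\bsbc)=\tfrac12\|\bsbc\|_2^2+P_\Theta(\|\bsbc\|_2;\lambda)$ by a composition argument that sidesteps the nonsmoothness of $\Theta$. Consider the scalar function $\tilde h(t)=\tfrac{1-\mu}{2}t^2+P_\Theta(t;\lambda)$ on $[0,\infty)$, whose a.e. derivative is $\tilde h'(t)=\Theta^{-1}(t;\lambda)-\mu t$ by \eqref{eq:construction}. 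Because $\Theta^{-1}$ is monotone, for $t_1<t_2$ its increment dominates the integral of its a.e. derivative, $\Theta^{-1}(t_2;\lambda)-\Theta^{-1}(t_1;\lambda)\ge\int_{t_1}^{t_2}\{\rd\Theta^{-1}(u;\lambda)/\rd u\}\,\rd u\ge\mu(t_2-t_1)$, so $\tilde h'$ is nondecreasing and $\tilde h$ is convex; taking $t_1=0$ in the same inequality gives $\Theta^{-1}(t;\lambda)\ge\Theta^{-1}(0;\lambda)+\mu t\ge\mu t$, whence $\tilde h'(t)\ge0$ and $\tilde h$ is nondecreasing. Since $\phi(\bsbc)-\tfrac{\mu}{2}\|\bsbc\|_2^2=\tilde h(\|\bsbc\|_2)$ is the composition of the convex nondecreasing $\tilde h$ with the norm, it is convex, i.e.\ $\phi$ is $\mu$-strongly convex. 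Adding the affine term $-\langle\bsbc,\bsby_i\rangle+\tfrac12\|\bsby_i\|_2^2$ leaves the modulus unchanged, so $q_i$ is $\mu$-strongly convex.

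Finally, for any $\mu$-strongly convex $q_i$ with global minimizer $\bsbc_i^o$, the defining inequality applied to $\theta\bsbc+(1-\theta)\bsbc_i^o$ gives $q_i(\bsbc_i^o)\le q_i(\theta\bsbc+(1-\theta)\bsbc_i^o)\le\theta q_i(\bsbc)+(1-\theta)q_i(\bsbc_i^o)-\tfrac{\mu}{2}\theta(1-\theta)\|\bsbc-\bsbc_i^o\|_2^2$; rearranging and letting $\theta\downarrow0$ yields $q_i(\bsbc)-q_i(\bsbc_i^o)\ge\tfrac{\mu}{2}\|\bsbc-\bsbc_i^o\|_2^2$, with no differentiability required. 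Summing over $i$ completes the argument. \textbf{The main obstacle} is the nonsmoothness of $\Theta$: $\Theta^{-1}$ may be flat or have jumps and $P_\Theta$ may be only piecewise smooth, so the modulus must be read off as an essential infimum and the convexity of $\tilde h$ argued through the monotone-increment inequality rather than a pointwise Hessian bound; the nondifferentiability of $\phi$ at the origin is exactly what the composition-with-the-norm device circumvents, and the hypothesis that $\vec\Theta$ is continuous at $\bsbY$ is what guarantees $\bsbc_i^o$ is the minimizer invoked above.
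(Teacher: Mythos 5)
Your proof is correct, but it is not the paper's argument: for this lemma the paper offers no self-contained proof at all, deriving the statement in one line from Lemma 2 of \cite{She2012}. Your route is genuinely different and stays entirely inside the paper's own toolkit. You decompose $Q$ over rows, use Lemma \ref{uniqsol-gen-grp} (with $q\equiv 0$) to identify $\bsbc_i^o=\vec\Theta(\bsby_i;\lambda)$ as a global minimizer of $q_i$, and then prove that $q_i$ is $\mu$-strongly convex with $\mu=1-\mathcal L_\Theta$, the key observation being that \eqref{eq:construction} gives $\tfrac{1-\mu}{2}t^2+P_\Theta(t;\lambda)=\int_0^t\{\Theta^{-1}(u;\lambda)-\mu u\}\rd u$ exactly, and the integrand is nonnegative and nondecreasing by the Lebesgue increment inequality for monotone functions, so this function is convex and nondecreasing, its composition with $\|\cdot\|_2$ is convex, and the generic gap inequality for a strongly convex function at a global minimizer yields the row-wise bound, which sums to the claim. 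What this buys: a self-contained proof that works directly in $\mathbb R^m$ for an arbitrary thresholding rule and any $\mu\ge 0$ (including $\mu>1$), with no smoothness assumed on $\Theta$; what the paper's citation buys is brevity. Two points deserve explicit mention in your write-up. First, when passing from the a.e.\ derivative being nondecreasing to convexity of $\tilde h$, you should state that $\tilde h$ \emph{equals} the integral $\int_0^t\{\Theta^{-1}(u;\lambda)-\mu u\}\rd u$ (absolute continuity), since an a.e.-nondecreasing derivative alone does not imply convexity (Cantor-type counterexamples); here the identity is immediate from the definition of $P_\Theta$, so this is a presentational fix rather than a gap. Second, your closing claim that the continuity hypothesis is what guarantees $\bsbc_i^o$ is a minimizer is inaccurate: Lemma \ref{uniqsol-gen-grp} requires no continuity, and your argument never actually uses it---indeed any discontinuity of $\Theta$ on $[0,\infty)$ creates a flat stretch of $\Theta^{-1}$, forcing $\mathcal L_\Theta=1$ and rendering the bound trivial---so what you have proved is, if anything, a marginally stronger statement than the lemma as stated.
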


\begin{lemma}\label{A8}
 Let $Q (\bsbB)= \| \bsbX\bsbB - \bsbY\|_{\tF}^2/2 $  and $\bsbB^o= \mathcal R(   \bsbX, \bsbY, r)$ which is of rank $r$. Then for any  $\bsbB: r(\bsbB)\le r/(1+\alpha)$ with $\alpha\ge 0$,
$
Q(\bsbB)-Q (\bsbB^o) \geq \{  1- (1+\alpha)^{-1/2}\} \|\bsbX\bsbB - \bsbX \bsbB^o\|_{\tF}^2/2
$.
\end{lemma}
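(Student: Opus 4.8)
The plan is to strip off the part of the squared loss that is insensitive to the rank constraint, reducing the claim to an inequality about truncated singular value decompositions, and then to prove that inequality by completing the square and invoking a short counting estimate on singular values.

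First I would identify $\bsbX\bsbB^o$. By \eqref{rrr-sol} with $\bsbGamma=\bsbI$ we have $\bsbX\bsbB^o=\Proj_{\bsbX}\bsbY\Proj_{\bsbV}$, and since $\bsbV$ collects the top $r$ eigenvectors of $\bsbY^\T\Proj_{\bsbX}\bsbY=\bsbZ^\T\bsbZ$ with $\bsbZ:=\Proj_{\bsbX}\bsbY$, the matrix $\bsbX\bsbB^o$ is exactly the best rank-$r$ approximation $\bsbZ_r$ of $\bsbZ$ in Frobenius norm. As $\bsbX\bsbB$ and $\bsbZ$ lie in the column space of $\bsbX$ while $\Proj_{\bsbX}^{\perp}\bsbY$ is orthogonal to it, the Pythagorean identity gives $\|\bsbX\bsbB-\bsbY\|_{\tF}^2=\|\bsbX\bsbB-\bsbZ\|_{\tF}^2+\|\Proj_{\bsbX}^{\perp}\bsbY\|_{\tF}^2$, so the $\bsbB$-free term cancels and \[ 2\{Q(\bsbB)-Q(\bsbB^o)\}=\|\bsbX\bsbB-\bsbZ\|_{\tF}^2-\|\bsbZ_r-\bsbZ\|_{\tF}^2. \] Writing $\bsbR:=\bsbZ-\bsbZ_r$ for the truncation residual and $\bsbD:=\bsbX\bsbB-\bsbZ_r$, and using $\langle\bsbR,\bsbZ_r\rangle=0$ (the retained and discarded singular components are orthogonal), expanding the two squares turns the desired bound into the single inequality \[ 2\langle\bsbR,\bsbX\bsbB\rangle=2\langle\bsbR,\bsbD\rangle\le(1+\alpha)^{-1/2}\|\bsbD\|_{\tF}^2. \]

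Next I would diagonalize. In the singular bases of $\bsbZ$ the matrices become $\bsbZ=\mathrm{diag}(\sigma_1,\sigma_2,\dots)$ with $\sigma_1\ge\sigma_2\ge\cdots\ge0$, $\bsbZ_r=\mathrm{diag}(\sigma_1,\dots,\sigma_r,0,\dots)$, and $\bsbR=\mathrm{diag}(0,\dots,0,\sigma_{r+1},\dots)$, while $\bsbW:=\bsbX\bsbB$ is an arbitrary matrix of rank at most $s:=r/(1+\alpha)$ (since $r(\bsbX\bsbB)\le r(\bsbB)$). With $c:=(1+\alpha)^{-1/2}=(s/r)^{1/2}$ it suffices to show $g(\bsbW):=2\langle\bsbR,\bsbW\rangle-c\|\bsbW-\bsbZ_r\|_{\tF}^2\le0$ for all $\bsbW$ with $r(\bsbW)\le s$. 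Completing the square gives $g(\bsbW)=-c\|\bsbW-(\bsbZ_r+c^{-1}\bsbR)\|_{\tF}^2+\mathrm{const}$, so by Eckart--Young the maximum of $g$ over rank-$\le s$ matrices is attained by truncating the diagonal matrix $\bsbZ_r+c^{-1}\bsbR$ to its $s$ largest entries. Its squared entries are $\sigma_i^2$ for $i\le r$ and $c^{-2}\sigma_i^2$ for $i>r$, and after cancellation of the constant terms $g_{\max}\le0$ is equivalent to the scalar statement that the sum of the $s$ largest of these squared entries is at most $\sum_{i=1}^r\sigma_i^2$.

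The crux of the argument, and the step I expect to be most delicate, is this final counting estimate. The naive route of bounding $\langle\bsbR,\bsbW\rangle$ by von Neumann's trace inequality and $\|\bsbW-\bsbZ_r\|_{\tF}^2$ from below by Mirsky's inequality is far too lossy, because the first wants $\bsbW$'s singular directions aligned with those of $\bsbR$ and the second wants them aligned with those of $\bsbZ_r$, and these are mutually exclusive. Arguing instead directly on the optimal truncation, if $j$ of the $s$ retained entries come from the tail positions $i>r$ then the requirement reduces to \[ \sum_{i=1}^{j}\sigma_{r+i}^2\le c^2\sum_{i=s-j+1}^{r}\sigma_i^2,\qquad c^2=s/r. \] Bounding the left side by $j\sigma_r^2$ (each $\sigma_{r+i}\le\sigma_r$) and the right side from below by $(s/r)(r-s+j)\sigma_r^2$ (there are $r-s+j$ terms, each at least $\sigma_r^2$), this collapses to the arithmetic fact $j(r-s)\le s(r-s)$, i.e.\ $j\le s$, which holds since at most $s$ entries are retained. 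Finally I would dispose of the boundary case $\alpha=0$ (then $s=r$, the constant $1-(1+\alpha)^{-1/2}$ vanishes, and the inequality is just the Eckart--Young optimality of $\bsbB^o$ among rank-$r$ matrices) and note that using the actual integer rank of $\bsbW$ in place of $s$ only strengthens the count.
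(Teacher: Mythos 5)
Your proof is correct, and it reaches the stated constant $1-(1+\alpha)^{-1/2}$ by a route that differs in mechanics from the paper's. The paper disposes of this lemma in one line by citing Proposition 2.2 of She (2013) to pass from the rank-constrained matrix problem to a cardinality-constrained problem on the singular values of $\Proj_{\bsbX}\bsbY$, and then invokes its own Lemma~\ref{le:basic_l0}, a sparse-vector comparison lemma proved by splitting the supports into $\mathcal J\cap\hat{\mathcal J}$, $\hat{\mathcal J}\setminus\mathcal J$, $\mathcal J\setminus\hat{\mathcal J}$ and solving $(1-K)^2=J_3/J_2$ for the best constant; the crucial input there is the same ordering fact you use, namely that every retained entry dominates every discarded one. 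You instead stay at the matrix level throughout: Pythagoras strips off $\Proj_{\bsbX}^{\perp}\bsbY$, completing the square converts the claim into a nearest rank-$s$ approximation of the diagonal matrix $\bsbZ_r+c^{-1}\bsbR$, Eckart--Young collapses the supremum over arbitrary (non-diagonal) rank-$\le s$ competitors $\bsbX\bsbB$ to a scalar truncation problem, and the count $j(r-s)\le s(r-s)$ finishes it. What your version buys is self-containedness and an explicit treatment of the one genuinely matrix-specific issue --- that $\bsbX\bsbB$ need not be diagonal in the singular basis of $\bsbZ$ --- which the paper delegates entirely to the external citation; your observation that the naive von Neumann/Mirsky pairing is too lossy is also a correct diagnosis of why the completed-square/Eckart--Young detour is needed. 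What the paper's route buys is reusability and a slightly sharper, instance-dependent constant: Lemma~\ref{le:basic_l0} gives $1-(|\mathcal J\setminus\hat{\mathcal J}|/|\hat{\mathcal J}\setminus\mathcal J|)^{1/2}$, of which $1-(s/r)^{1/2}=1-(1+\alpha)^{-1/2}$ is only the worst case, and the same vector lemma serves the quantile-thresholding analysis elsewhere in the appendix. Your closing remarks on the boundary case $\alpha=0$ and on replacing $s$ by the integer rank of $\bsbX\bsbB$ correctly tie up the remaining loose ends.
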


The lemma follows from Proposition 2.2 of \cite{She2013} and Lemma \ref{le:basic_l0} below.
\begin{lemma} \label{le:basic_l0}
The optimization problem $\min_{\bsbb\in \mathbb R^p} l(\bsbb)= \|\bsby - \bsbb\|_2^2/2 \mbox{  s.t. } \|\bsbb\|_0\le q$ has    $\hat\bsbb = \Theta^{\#}(\bsby; q)$ as a globally optimal solution.    Assume that   $J(\hat\bsbb)=q$, where $J(\cdot) = \| \cdot\|_0$. Then   for any $\bsbb$ with   $ J(\bsbb)\le s =  q/\theta $ and $\theta \ge 1$, we have
$
l (\bsbb) - l(\hat\bsbb) \ge \{1-\mathcal L(\mathcal J, \hat{\mathcal  J})\}\| \hat\bsbb - \bsbb\|_2^2/2
$
where   $ \mathcal L(\mathcal J, \hat{\mathcal  J}) = ({| \mathcal J \setminus \hat{\mathcal  J}| }/{| \hat{\mathcal  J} \setminus \mathcal J|})^{1/2}\le ({s}/{q})^{1/2}=  {\theta}^{-1/2}$, $\mathcal J = \mathcal J(\bsbb)$ and $\hat{\mathcal J} = \mathcal J(\hat \bsbb)$.
\end{lemma}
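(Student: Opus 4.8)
The first claim is the standard fact that hard-thresholding solves the $\ell_0$-constrained least-squares problem, and the plan is to argue coordinatewise. Since $l(\bsbb)=\frac12\sum_i (y_i-b_i)^2$ decouples across coordinates, activating a coordinate $i$ (allowing $b_i\neq 0$) can lower its contribution from $y_i^2$ to $0$ by taking $b_i=y_i$, so under the budget $\|\bsbb\|_0\le q$ the optimal choice is to retain the $q$ coordinates of largest magnitude, which is exactly $\hat\bsbb=\Theta^{\#}(\bsby;q)$; the assumption $J(\hat\bsbb)=q$ guarantees $|\hat{\mathcal J}|=q$.

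For the quadratic lower bound I would first expand the gap around $\hat\bsbb$. Using $\bsby-\bsbb=(\bsby-\hat\bsbb)+(\hat\bsbb-\bsbb)$ gives $l(\bsbb)-l(\hat\bsbb)=\langle \bsby-\hat\bsbb,\hat\bsbb-\bsbb\rangle+\frac12\|\hat\bsbb-\bsbb\|_2^2$. The key structural observation is that $\bsby-\hat\bsbb$ vanishes on $\hat{\mathcal J}$ (where $\hat\bsbb=\bsby$) and equals $\bsby$ on $\hat{\mathcal J}^c$ (where $\hat\bsbb=0$), while $\hat\bsbb-\bsbb$ equals $-\bsbb$ on $\hat{\mathcal J}^c$; since $\bsbb$ is supported on $\mathcal J$, the cross term collapses to $-\sum_{i\in\mathcal J\setminus\hat{\mathcal J}} y_i b_i$. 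It therefore suffices to prove $\sum_{i\in\mathcal J\setminus\hat{\mathcal J}} y_i b_i\le \frac{\mathcal L}{2}\|\hat\bsbb-\bsbb\|_2^2$, after which $l(\bsbb)-l(\hat\bsbb)\ge \frac{1-\mathcal L}{2}\|\hat\bsbb-\bsbb\|_2^2$ follows immediately.

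The heart of the argument, and the step I expect to be the main obstacle, is bounding this cross term through the selection property of $\Theta^{\#}$. By definition the retained coordinates carry the largest magnitudes, so every $|y_i|$ with $i\in\mathcal J\setminus\hat{\mathcal J}$ (discarded) is dominated by every $|y_j|$ with $j\in\hat{\mathcal J}\setminus\mathcal J$ (retained). Comparing the average of $y^2$ over the discarded set against the average over the retained set then yields $\sum_{i\in\mathcal J\setminus\hat{\mathcal J}} y_i^2\le \mathcal L^2\sum_{j\in\hat{\mathcal J}\setminus\mathcal J} y_j^2$ with $\mathcal L^2=|\mathcal J\setminus\hat{\mathcal J}|/|\hat{\mathcal J}\setminus\mathcal J|$. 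Cauchy--Schwarz followed by AM--GM gives $\sum_{i\in\mathcal J\setminus\hat{\mathcal J}} y_i b_i\le \mathcal L\,(\sum_{j\in\hat{\mathcal J}\setminus\mathcal J} y_j^2)^{1/2}(\sum_{i\in\mathcal J\setminus\hat{\mathcal J}} b_i^2)^{1/2}\le \frac{\mathcal L}{2}\,(\sum_{j\in\hat{\mathcal J}\setminus\mathcal J} y_j^2+\sum_{i\in\mathcal J\setminus\hat{\mathcal J}} b_i^2)$. Because $\hat\bsbb-\bsbb$ equals $y_j$ on $\hat{\mathcal J}\setminus\mathcal J$ and $-b_i$ on $\mathcal J\setminus\hat{\mathcal J}$, both sums in the bracket are pieces of $\|\hat\bsbb-\bsbb\|_2^2$, so the bracket is at most $\|\hat\bsbb-\bsbb\|_2^2$ and the estimate closes. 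The delicate points here are verifying the max/min domination from the thresholding rule (including the tie-breaking convention) and confirming that the two partial sums are genuinely orthogonal components of the total squared norm.

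Finally I would record the cardinality bound $\mathcal L\le \theta^{-1/2}$. Setting $t=|\mathcal J\cap\hat{\mathcal J}|$, one has $\mathcal L^2=(|\mathcal J|-t)/(q-t)\le (s-t)/(q-t)$, which is decreasing in $t$ on $[0,s]$ since $s\le q$, hence maximized at $t=0$, giving $\mathcal L^2\le s/q$. The one case needing separate attention is the degenerate one where $\hat{\mathcal J}\setminus\mathcal J=\emptyset$: then $\hat{\mathcal J}\subseteq\mathcal J$ together with $|\mathcal J|\le q=|\hat{\mathcal J}|$ forces $\mathcal J=\hat{\mathcal J}$, the cross term vanishes, and the inequality holds trivially with $\mathcal L$ read as $0$.
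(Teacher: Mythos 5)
Your argument is correct, and it reaches the same constant $\mathcal L=(|\mathcal J\setminus\hat{\mathcal J}|/|\hat{\mathcal J}\setminus\mathcal J|)^{1/2}$ as the paper, but the route through the key inequality is organized differently. The paper also partitions the indices into $\mathcal J_1=\mathcal J\cap\hat{\mathcal J}$, $\mathcal J_2=\hat{\mathcal J}\setminus\mathcal J$, $\mathcal J_3=\mathcal J\setminus\hat{\mathcal J}$ and uses the same ordering property ($\|\bsby_{\mathcal J_2}\|_2^2/|\mathcal J_2|\ge\|\bsby_{\mathcal J_3}\|_2^2/|\mathcal J_3|$, coming from $\Theta^{\#}$ retaining the largest magnitudes), but it parameterizes $\bsbb$ on its support as $\bsby+\bsbdelta$, writes out both $l(\bsbb)-l(\hat\bsbb)$ and $\tfrac12\|\hat\bsbb-\bsbb\|_2^2$ blockwise, and then solves for the largest $K$ making the blockwise inequality hold, arriving at $(1-K)^2=|\mathcal J_3|/|\mathcal J_2|$. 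You instead expand $l$ around $\hat\bsbb$, observe that the entire discrepancy is the single cross term $\sum_{i\in\mathcal J_3}y_ib_i$, and control it by Cauchy--Schwarz plus AM--GM against the two disjoint pieces $\|\bsby_{\mathcal J_2}\|_2^2$ and $\|\bsbb_{\mathcal J_3}\|_2^2$ of $\|\hat\bsbb-\bsbb\|_2^2$. Your version is somewhat cleaner and makes it transparent where the loss factor $\mathcal L$ enters (it is exactly the Cauchy--Schwarz constant for the discarded-versus-retained energies), while the paper's derivation shows that $K=1-\mathcal L$ is the \emph{sharpest} constant obtainable from that blockwise comparison, which your one-sided bound does not exhibit. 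You also handle the degenerate case $\hat{\mathcal J}\setminus\mathcal J=\emptyset$ and the monotonicity giving $\mathcal L^2\le s/q$ explicitly, both of which the paper leaves implicit (the paper instead passes through the mediant inequality $|\mathcal J_3|/|\mathcal J_2|\le(|\mathcal J_3|+|\mathcal J_1|)/(|\mathcal J_2|+|\mathcal J_1|)=J/\hat J$, valid since $|\mathcal J_3|\le|\mathcal J_2|$). Everything checks out.
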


With Lemmas \ref{A6}, \ref{A7}, and \ref{A8} available, the conclusion results from Theorem 2 of \cite{She2016a}.

\subsection*{Proof of Lemma \ref{le:basic_l0}}
\begin{proof}
Let $\mathcal J_1 = \mathcal J \cap \hat{\mathcal J}$, $\mathcal J_2 =  \hat{\mathcal J}\setminus \mathcal J$ and $\mathcal J_3 = \mathcal J \setminus \hat{\mathcal J}$. Then   $\bsbb=\bsbb_{\mathcal J_1}+\bsbb_{\mathcal J_3}$ and $\hat\bsbb=\bsbb_{\mathcal J_1}+\bsbb_{\mathcal J_2}$. By writing  $\bsbb_{\mathcal J_1} = \bsby_{{\mathcal J}_1}+\bsbdelta_{{\mathcal J}_1}$ and  $\bsbb_{\mathcal J_3} =  \bsby_{{\mathcal J}_3}+\bsbdelta_{{\mathcal J}_3}$, we have
\begin{align*}
l (\bsbb ) - l (\hat\bsbb )& = \frac{1}{2} \|\bsbdelta_{{\mathcal J}_1}\|_2^2 + \frac{1}{2} \|\bsby_{{\mathcal J}_2}\|_2^2+\frac{1}{2} \|\bsbdelta_{{\mathcal J}_3}\|_2^2- \frac{1}{2} \|\bsby_{{\mathcal J}_3}\|_2^2\\
\frac{1}{2}\| \hat\bsbb - \bsbb\|_2^2&=\frac{1}{2} \|\bsbdelta_{{\mathcal J}_1}\|_2^2+\frac{1}{2} \|\bsby_{{\mathcal J}_2}\|_2^2+ \frac{1}{2} \|\bsby_{{\mathcal J}_3}+\bsbdelta_{{\mathcal J}_3}\|_2^2\mbox{.}
\end{align*}
The key lies in the comparison between     $   \|\bsby_{{\mathcal J}_2}\|_2^2+  \|\bsbdelta_{{\mathcal J}_3}\|_2^2-   \|\bsby_{{\mathcal J}_3}\|_2^2$ and $   \|\bsby_{{\mathcal J}_2}\|_2^2+ \|\bsby_{{\mathcal J}_3}+\bsbdelta_{{\mathcal J}_3}\|_2^2$.
Let  $K\le 1$ satisfy
$$
\frac{1}{2} \|\bsby_{{\mathcal J}_2}\|_2^2+\frac{1}{2} \|\bsbdelta_{{\mathcal J}_3}\|_2^2- \frac{1}{2} \|\bsby_{{\mathcal J}_3}\|_2^2 \ge  \frac{K}{2} \|\bsby_{{\mathcal J}_2}\|_2^2+ \frac{K}{2} \|\bsby_{{\mathcal J}_3}+\bsbdelta_{{\mathcal J}_3}\|_2^2,
$$
which is equivalent
to \begin{align}
(1-K)\|\bsby_{{\mathcal J}_2}\|_2^2+ \|\bsbdelta_{{\mathcal J}_3}\|_2^2   \ge K  \|\bsby_{{\mathcal J}_3}+\bsbdelta_{{\mathcal J}_3}\|_2^2 + \|\bsby_{{\mathcal J}_3}\|_2^2\mbox{.}\label{intermedineql0}
\end{align}
By construction,   $ \lvert y_i \rvert \ge \lvert y_j \rvert$ for any $ i \in {\mathcal J}_2$ and $ j \in {\mathcal J}_3$. Thus   $\|\bsby_{{\mathcal J}_2}\|_2^2/J_2 \ge \|\bsby_{{\mathcal J}_3}\|_2^2/J_3$, from which it follows that  \eqref{intermedineql0} is implied by 
\begin{align*}
(1-K)\frac{J_2}{J_3}\|\bsby_{{\mathcal J}_3}\|_2^2+ \|\bsbdelta_{{\mathcal J}_3}\|_2^2 \ge (1+K) \|\bsby_{{\mathcal J}_3}\|_2^2+ K \| \bsbdelta_{{\mathcal J}_3}\|_2^2 + 2K \langle \bsby_{{\mathcal J}_3}, \bsbdelta_{{\mathcal J}_3}\rangle, \end{align*}
or
\begin{align*}
\frac{(1-K) ({J_2}/{J_3})-(1+K)}{K}\|\bsby_{{\mathcal J}_3}\|_2^2+ \frac{1-K}{K} \|\bsbdelta_{{\mathcal J}_3}\|_2^2 \ge    2 \langle \bsby_{{\mathcal J}_3}, \bsbdelta_{{\mathcal J}_3}\rangle\mbox{.} \end{align*}
Therefore, the largest possible $K$ satisfies
$$
\frac{(1-K)({J_2}/{J_3})-(1+K)}{K} \times \frac{1-K}{K}=1
$$
or
$
(1-K)^2 = J_3/  J_2$. This gives
$$
\mathcal L = 1-K =   ({ {J_3}/{J_2}} )^{1/2} \le \{({J_3+J_1})/({J_2+J_1})\}^{1/2}= ({ {J}/{\hat  J}})^{1/2} \le  {\theta}^{-1/2}\mbox{.}
$$
The proof is complete.
\end{proof}

\subsection{Proof of Theorem 9}

  Let $h(\bsbB,  \bsbC; A) = 1/\{mn - A P  (\bsbB, \bsbC)\}$.
It follows from  $1/(1-\delta) \ge \exp(\delta)  $ for any $0\le \delta < 1$ and $\exp(\delta) \ge 1/(1-\delta/2)$ for any $0\ge\delta<2$ that   \begin{align*}
 mn\| \bsbY - \bsbX \hat \bsbB - \hat \bsbC\|_{\tF}^2 \,  h(\hat \bsbB, \hat \bsbC; A/2)   
  \le & {\| \bsbY - \bsbX \hat \bsbB-\hat \bsbC\|_{\tF}^2}\exp\{ \delta(\hat \bsbB, \hat \bsbC)\} \\ \le & {\| \bsbY - \bsbX   \bsbB^*-  \bsbC^*\|_{\tF}^2}\exp\{ \delta( \bsbB^*,  \bsbC^*)\} \\
\le & \| \bsbY - \bsbX   \bsbB^*- \bsbC^*\|_{\tF}^2 \, h( \bsbB^*,  \bsbC^*;  A) mn\mbox{.}
\end{align*}
Since $ h(\hat \bsbB, \hat \bsbC; A/2)> 0$, we have     $$\| \bsbY - \bsbX \hat \bsbB - \hat \bsbC\|_{\tF}^2\le \| \bsbY - \bsbX   \bsbB^* - \bsbC^*\|_{\tF}^2 \, h(  \bsbB^*, \bsbC^*;  A)/ h(\hat \bsbB, \hat \bsbC; A/2)\mbox{.}$$    With a bit of algebra, we get
\begin{align*}
 M( \hat \bsbB -  \bsbB^*, \hat \bsbC -  \bsbC^*) 
\leq & \|\bsbE\|_{\tF}^2 \{{h(\bsbB^*, \bsbC^*;  A)}/{h(\hat\bsbB, \hat\bsbC;  0.5A)}-1\} \\ & + 2\langle \bsbE, \bsbX \hat \bsbB - \bsbX \bsbB^* + \hat\bsbC - \bsbC^*\rangle \\
\leq & \frac{ A  \|\bsbE\|_{\tF}^2 }{mn\sigma^2 -  A \sigma^{2} P(\bsbB^*, \bsbC^*) }\sigma^2 P(\bsbB^*, \bsbC^*) - \frac{0.5 A  \|\bsbE\|_{\tF}^2}{mn \sigma^2}\sigma^2P(\hat\bsbB, \hat \bsbC) \\& + 2\langle \bsbE, \bsbX \hat \bsbB - \bsbX \bsbB^*+ \hat\bsbC - \bsbC^*\rangle\mbox{.}
\end{align*}

We give a finer treatment of the last stochastic term than that  in the proof of  Lemma \ref{lemma:phostochastic}, to show that $\langle \bsbE, \bsbX \hat \bsbB - \bsbX \bsbB^*+ \hat\bsbC - \bsbC^*\rangle$ can be bounded by   $ P(\bsbB^*, \bsbC^*)  +  P(\hat \bsbB, \hat \bsbC)   $ up to a multiplicative constant with high probability.
Let $\bsbDelta^B = \hat \bsbB - \bsbB^*$, $\bsbDelta^C = \hat \bsbC - \bsbC^*$, $\hat {\mathcal J} = \mathcal J ( \hat \bsbC)$,  $  {\mathcal J}^* = \mathcal J (  \bsbC^*)$, $\hat r = r ( \hat \bsbB)$, $r^* = r (   \bsbC^*)$. In the following, given any index set $\mathcal J\subset [n]$, we denote by $\bsbI_{\mathcal J}$ the submatrix of $\bsbI_{n\times n}$ formed by the columns indexed by $\mathcal J$, and   abbreviate $\Proj_{\bsbI_\mathcal J}$ to  $\Proj_{\mathcal J}$. Let $\Proj_1 = \Proj_{\mathcal J^*}$,  $\Proj_2 = \Proj_{(\mathcal J^{*  })^c \cap \hat {\mathcal J}}$,  $\Proj_3 = \Proj_{(\mathcal J^{*  }  \cup \hat {\mathcal J})^c}$, and $\Proj_{rs}$ be the orthogonal projection onto the row space of $\bsbX \bsbB^*$ which is of rank $\le r^*$.
Then
\begin{align}
&  \bsbX \bsbDelta^B  - \bsbDelta^C  \notag\\
= \, &    \Proj_{1} ( \bsbX \bsbDelta^B  - \bsbDelta^C) +  \Proj_{2} ( \bsbX \bsbDelta^B  - \bsbDelta^C)  +  \Proj_{3} ( \bsbX \bsbDelta^B  - \bsbDelta^C) \Proj_{rs} +    \Proj_{3} ( \bsbX \bsbDelta^B  - \bsbDelta^C) \Proj_{rs}^\perp \notag\\
 \equiv \,&    \bsbDelta_1     +   \bsbDelta_2 + \bsbDelta_3 + \bsbDelta_4 , \notag
\end{align}
and $\sum_{i=1}^4\| \bsbDelta_i\|_{\tF}^2 = \|\bsbX \bsbDelta^B  - \bsbDelta^C\|_{\tF}^2$.
 Then $CS(\bsbDelta_1)\subset \Proj_{{\mathcal J}^*}$, $CS(\bsbDelta_2)\subset \Proj_{\hat {\mathcal J}}$, $r(\bsbDelta_3)\le r^*$,  and $r(\bsbDelta_4) = r(\Proj_{3}   \bsbX \bsbDelta^B    \Proj_{rs}^\perp)=r(\Proj_{3}   \bsbX \hat\bsbB     \Proj_{rs}^\perp) \le \hat r$.  The stochastic term can then be handled in a way similar to   that in   Lemma \ref{lemma:phostochastic}. For example, we can use the following result to handle $\langle \bsbE, \bsbDelta_4\rangle$.

\begin{lemma}
Given $\bsbX\in \mathbb R^{n\times p}$, $1\leq J_1, J_2\leq n$, $1\leq r \leq m\wedge p$, define $\Gamma_{r, J_1, J_2}  = \{\bsbA\in \mathbb R^{n\times m}: \|\bsbA\|_{\tF}\leq 1, r(\bsbA) \leq r, CS(\bsbA) \subset CS[\bsbX\{({\mathcal J}_1\cup {\mathcal J}_2)^c, :\}] \mbox{ for some } {\mathcal J}_1, {\mathcal J}_2: | \mathcal J_1|=J_1,    | \mathcal J_2|=J_2\}$. Let $$P_o (J_{1}, J_{2} , r) = \sigma^2 \left\{ q   r+(m - r) r + \log {n\choose J_1}+\log {n\choose J_2}\right\}\mbox{.}$$
Then for any $t\geq 0$,
\begin{align}
\EP \Big[\sup_{\bsbA \in \Gamma_{r,J_1, J_2}} \langle \bsbE, \bsbA \rangle \geq t \sigma +   \{L   P_o (J_{1}, J_2,r)\}^{1/2}\Big] \leq c'\exp(- ct^2),
\end{align}
where $L,   c, c'>0$ are universal constants.
\end{lemma}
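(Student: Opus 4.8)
The statement is the two–support–set counterpart of Lemma~\ref{concenGauss} (whose proof is attributed to \cite{She2016b}), and I would prove it along exactly the same lines: first control the Gaussian process $\bsbA\mapsto\langle\bsbE,\bsbA\rangle$ for a single fixed choice of the index sets, then pay for the combinatorial freedom by a union bound. Concretely, I would fix $\mathcal J_1,\mathcal J_2\subset[n]$ with $|\mathcal J_1|=J_1$, $|\mathcal J_2|=J_2$, let $\mathcal V=CS[\bsbX\{(\mathcal J_1\cup\mathcal J_2)^c,:\}]$ be the corresponding subspace of $\mathbb R^n$ (of dimension $d\le q$), and write $f(\bsbE)=\sup\{\langle\bsbE,\bsbA\rangle:\|\bsbA\|_{\tF}\le1,\ r(\bsbA)\le r,\ CS(\bsbA)\subset\mathcal V\}$.

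For the fixed–support bound, the key observation is that $CS(\bsbA)\subset\mathcal V$ lets me replace $\bsbE$ by $\Proj_{\mathcal V}\bsbE$, after which the variational description of the best rank-$r$ approximation gives $f(\bsbE)=\{\sum_{i=1}^r\sigma_i^2(\Proj_{\mathcal V}\bsbE)\}^{1/2}\le r^{1/2}\|\Proj_{\mathcal V}\bsbE\|_2$, where $\sigma_i(\cdot)$ is the $i$th largest singular value. Since $\Proj_{\mathcal V}\bsbE$ is, up to an isometry, a $d\times m$ matrix with i.i.d.\ $N(0,\sigma^2)$ entries, the standard expected–spectral–norm bound yields $\EE f(\bsbE)\le r^{1/2}\sigma(d^{1/2}+m^{1/2})\lesssim\sigma\{L_0 r(q+m-r)\}^{1/2}$, using $d\le q$ and $q+m-r\ge(q+m)/2$ (as $r\le q\wedge m$). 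Because $f$ is $1$-Lipschitz in $\bsbE$ with respect to $\|\cdot\|_{\tF}$ (every competing $\bsbA$ has $\|\bsbA\|_{\tF}\le1$), the Gaussian concentration inequality then gives $\EP\{f(\bsbE)\ge\EE f(\bsbE)+u\}\le\exp(-u^2/(2\sigma^2))$ for all $u\ge0$.

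Finally I would take a union bound over the $\binom{n}{J_1}\binom{n}{J_2}$ choices of $(\mathcal J_1,\mathcal J_2)$. Setting the deviation $u=t\sigma+\sigma\{L_1(\log\binom{n}{J_1}+\log\binom{n}{J_2})\}^{1/2}$ and using $u^2\ge t^2\sigma^2+L_1\sigma^2(\log\binom{n}{J_1}+\log\binom{n}{J_2})$, each fixed–support failure probability is at most $\exp(-t^2/2)\exp\{-\frac{L_1}{2}(\log\binom{n}{J_1}+\log\binom{n}{J_2})\}$; taking $L_1\ge4$ makes the summed combinatorial factor at most one, so the pooled probability is $\le\exp(-t^2/2)$. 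Combining the two square-root contributions into $\{L\,P_o(J_1,J_2,r)\}^{1/2}$ with $L=2(L_0+L_1)$ then closes the argument with $c=1/2$ and $c'=1$. The step I expect to need the most care is the fixed–support complexity estimate: one must argue cleanly that, over a fixed column space, the rank-$r$ Frobenius-unit supremum collapses to a truncated singular-value functional of a $d\times m$ Gaussian matrix, so that the bound depends on $q$ and $m$ only through $r(q+m-r)$ and is \emph{independent of the ambient dimension} $n$; the subsequent union bound and the absorption of the $\log\binom{n}{J_i}$ terms are then routine calibration.
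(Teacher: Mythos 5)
Your proposal is correct, and it supplies essentially the argument the paper has in mind: the paper itself omits the proof of this lemma (deferring, as for its single-support analogue Lemma~\ref{concenGauss}, to Lemma 4 of \cite{She2016b}), and that argument is exactly your three-step scheme --- reduce the fixed-support supremum to $\{\sum_{i=1}^{r}\sigma_i^2(\Proj_{\mathcal V}\bsbE)\}^{1/2}\le r^{1/2}\|\bsbU^\T\bsbE\|_2$ with $\bsbU^\T\bsbE$ a $d\times m$ Gaussian matrix ($d\le q$), apply the expected-spectral-norm bound plus Borell--TIS concentration via the $1$-Lipschitz property, and absorb the $\log\binom{n}{J_1}+\log\binom{n}{J_2}$ terms through the union bound over supports. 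The one point worth recording explicitly is the observation you already flag implicitly: the maximizer $\bsbA^*$ of the rank-$r$ Frobenius-ball supremum inherits $CS(\bsbA^*)\subset\mathcal V$ from $\Proj_{\mathcal V}\bsbE$, so the column-space constraint costs nothing beyond replacing $\bsbE$ by its projection.
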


Following the lines of   the proof of Theorem 2 in \cite{She2016b}, we can show that for any constants $a, b, a'>0$ satisfying $4b >a$, the following event
$$
 2\langle \bsbE, \bsbX \bsbDelta^B  - \bsbDelta^C \rangle
\leq2  ({1}/{a}+{1}/{a'})M( \hat \bsbB -  \bsbB^*, \hat \bsbC -  \bsbC^*)   + 8b L \sigma^2\{P (\hat\bsbB ,\hat\bsbC) + P(\bsbB^*, \bsbC^*)\}
$$
occurs with probability at least $1-c_1' n^{-c_1}$ for some $c_1, c_1'>0$, where $L$ is a sufficiently large constant.

Let $\gamma$ and $\gamma'$ be  constants satisfying $0< \gamma <\ 1, \gamma'>0$. On $\mathcal A =\{(1-\gamma) {mn\sigma^2}\leq \|\bsbE\|_{\tF}^2 \leq (1+\gamma') {mn\sigma^2} \}$ ,  we have
\begin{align*}
&\frac{ A  \|\bsbE\|_{\tF}^2}{mn\sigma^2 -  A \sigma^{2} P(\bsbB^*, \bsbC^*)} \sigma^2P(\bsbB^*, \bsbC^*) - \frac{0.5A  \|\bsbE\|_{\tF}^2}{mn \sigma^2}\sigma^2P (\hat\bsbB, \hat \bsbC) \\ \leq \ &  \frac{(1+\gamma') A A_0}{A_0 -  A}\sigma^2P (\bsbB^*, \bsbC^*)-0.5   {(1-\gamma) A\sigma^2 }P(\hat\bsbB, \hat \bsbC)\mbox{.}
\end{align*}
From \cite{Laurent2000}, the complement of $\mathcal A$ occurs with probability at most $c_2'\exp(-c_2 m n)$,   where  $c_2, c_2'$ are dependent   on constants $\gamma, \gamma'$.
 With $A_0$   large enough, we can choose  $a, a', b, A$ such that    $({1}/{a}+{1}/{a'})<{1}/{2}$,  $4  b>a$, and $16b L\leq ( 1-\gamma)A$. The conclusion results.


\subsection{ Theorem 10}

\setcounter{theorem}{9}

\begin{theorem}
\label{th_est}
Let $(\hat \bsbB, \hat\bsbC) = \arg \min_{(\bsbB, \bsbC)  } \| \bsbY - \bsbX \bsbB-\bsbC\|_{\tF}^2/2 + \lambda \|\bsbC\|_{2,1}$ subject to $r(\bsbB)\leq r$,    $\lambda =A\sigma (m+\log n)^{1/2}$ where  $r\ge r^* \ge 1$ and  $A$ is a  large enough constant. Assume that      $\bsbX$ satisfies  $
   (1+\vartheta) \lambda \| \bsbC'_{\mathcal J^*}\|_{2,1} + n\| \bsbB'\|_{\tF}^2   \le   \lambda \| \bsbC'_{{{\mathcal J}^*}^c}\|_{2,1}  +\sigma\zeta  \{ {(m+q)r} \}^{1/2} \|\bsbX \bsbB'   + \bsbC'\|_{\tF}
$
for all $\bsbB' $  and  $\bsbC' $   with  $r(\bsbB' )\le 2r$,   where   $\vartheta>0$ is a constant and $\zeta\ge 0$. Then, we have  \begin{align*}
\EE (\|  \hat \bsbB -\bsbB^*\|_{\tF}^2)\lesssim \,   \sigma^2   (1+\zeta^2)\frac{(m+q)r }{n}\mbox{.}
 \end{align*}
\end{theorem}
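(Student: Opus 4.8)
The plan is to mirror the proof of Theorem~6 up to the bound on the penalty cross-term, and then to exploit the new regularity hypothesis in place of the $K$-incoherence bound. I would begin from the basic inequality implied by the optimality of $(\hat\bsbB,\hat\bsbC)$,
\[
\tfrac12 M(\hat\bsbB-\bsbB^*,\hat\bsbC-\bsbC^*)\le \tfrac12 M(\bsbB-\bsbB^*,\bsbC-\bsbC^*)+\lambda\|\bsbC\|_{2,1}-\lambda\|\hat\bsbC\|_{2,1}+\langle\bsbE,\bsbX\bsbDelta^B+\bsbDelta^C\rangle,
\]
and specialize it to $(\bsbB,\bsbC)=(\bsbB^*,\bsbC^*)$, which is admissible because $r\ge r^*$ and which annihilates the bias term. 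Here $\bsbDelta^B=\hat\bsbB-\bsbB^*$ (of rank at most $2r$), $\bsbDelta^C=\hat\bsbC-\bsbC^*$, and I write $D=\|\bsbX\bsbDelta^B+\bsbDelta^C\|_{\tF}$, so that $M(\hat\bsbB-\bsbB^*,\hat\bsbC-\bsbC^*)=D^2$. Controlling the stochastic term exactly as in Lemma~\ref{lemma:phostochastic} and the proof of Theorem~6 produces constants $a\ge 2b>0$ and a remainder $R$ with $\EE R\le ac\sigma^2$, together with the inequality $(1-2/a)D^2\le 2aA_0\sigma^2 r(m+q)+R+I$, where $I$ gathers the penalty differences.

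Next I would reuse the penalty manipulation of Theorem~6. Using sub-additivity of $\|\cdot\|_{2,1}$, the support structure of $\bsbC^*$ on $\mathcal J^*$, and the choices $\theta=\vartheta/(2+\vartheta)$ and $b=1/(2\theta)$, one obtains
\[
I\le 2(1-\theta)\{(1+\vartheta)\lambda\|\bsbDelta^C_{\mathcal J^*}\|_{2,1}-\lambda\|\bsbDelta^C_{(\mathcal J^*)^c}\|_{2,1}\}.
\]
This is where the argument departs from Theorem~6. Instead of the $K$-bound, I would invoke the stated regularity condition with $\bsbB'=\bsbDelta^B$ and $\bsbC'=\bsbDelta^C$, legitimate since $r(\bsbDelta^B)\le 2r$; after rearrangement it gives $(1+\vartheta)\lambda\|\bsbDelta^C_{\mathcal J^*}\|_{2,1}-\lambda\|\bsbDelta^C_{(\mathcal J^*)^c}\|_{2,1}\le -n\|\bsbDelta^B\|_{\tF}^2+\sigma\zeta\{(m+q)r\}^{1/2}D$. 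Hence $I\le 2(1-\theta)\{-n\|\bsbDelta^B\|_{\tF}^2+\sigma\zeta\{(m+q)r\}^{1/2}D\}$.

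The remaining step is purely algebraic. Substituting this into the master inequality and carrying the negative term $2(1-\theta)n\|\bsbDelta^B\|_{\tF}^2$ to the left yields
\[
(1-\tfrac2a)D^2+2(1-\theta)n\|\bsbDelta^B\|_{\tF}^2\le 2aA_0\sigma^2 r(m+q)+R+2(1-\theta)\sigma\zeta\{(m+q)r\}^{1/2}D.
\]
I would dispatch the cross-term by Young's inequality, $2(1-\theta)\sigma\zeta\{(m+q)r\}^{1/2}D\le \tfrac14(1-\tfrac2a)D^2+4(1-\theta)^2\sigma^2\zeta^2(m+q)r/(1-2/a)$, absorb the resulting $D^2$ contribution on the left, and then discard the nonnegative $D^2$-term to retain $2(1-\theta)n\|\bsbDelta^B\|_{\tF}^2$ alone. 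Taking expectations with $\EE R\le ac\sigma^2$ and dividing by $2(1-\theta)n$ gives $\EE\|\hat\bsbB-\bsbB^*\|_{\tF}^2\lesssim \sigma^2 r(m+q)/n+\sigma^2/n+\sigma^2\zeta^2(m+q)r/n$; since $(m+q)r\ge 1$ the middle term is absorbed, proving $\EE\|\hat\bsbB-\bsbB^*\|_{\tF}^2\lesssim \sigma^2(1+\zeta^2)(m+q)r/n$.

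The main obstacle, and the only genuinely new ingredient beyond Theorem~6, is recognizing that $n\|\bsbDelta^B\|_{\tF}^2$ enters the bound on $I$ with a \emph{negative} sign: rather than controlling a prediction error, this term can be carried to the left-hand side, where it becomes exactly the quantity the theorem estimates. The rest is bookkeeping, but two points need care: one must keep $a>2$ so the coefficient $1-2/a$ of $D^2$ stays positive, guaranteeing both the Young-inequality absorption and the legitimacy of discarding the $D^2$-term; and one must verify that $\bsbDelta^B$ meets the rank-$2r$ requirement of the regularity condition, which holds since $r\ge r^*$. Because the condition uses the same $\lambda$ as the estimator, no re-tuning of the threshold is needed.
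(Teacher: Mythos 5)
Your proposal is correct and follows essentially the same route as the paper's proof: specialize the basic inequality at $(\bsbB^*,\bsbC^*)$, reuse the stochastic and penalty bounds from the proof of Theorem~\ref{th_oracle-l1}, invoke the regularity condition at $(\hat\bsbB-\bsbB^*,\hat\bsbC-\bsbC^*)$ so that the $-n\|\hat\bsbB-\bsbB^*\|_{\tF}^2$ term moves to the left-hand side, and finish with Young's/H\"older's inequality. The key observation you single out — that the new condition injects the estimation error with a negative sign — is exactly the mechanism the paper relies on.
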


\begin{proof}
A careful examination of  the proof  of Theorem 3 shows  that   for any $a\ge 2 b > 0$,
\begin{align*}
(1-\frac{1}{a})M(\hat\bsbB-\bsbB^*, \hat\bsbC - \bsbC^*)  
\leq \ &    2aA_0 \sigma^2 r(m+q) + R  +2  P(\bsbC^{*}; {\lambda}) - 2P(\hat \bsbC; {\lambda}) \\& +  \frac{1}{b}    P_{2,H}(\hat \bsbC - \bsbC^*; {\lambda}),
\end{align*}
where    $\lambda = A\lambda^o$, $\lambda^o=\sigma(m+ \log n)^{1/2}$, $A =  (a b A_1)^{1/2}$, $A_1\ge A_0$ with $A_0$ a universal constant, and $ER\le a c \sigma^2$.

Set    $b = 1/(2\theta)$, $\theta = \vartheta/(2+\vartheta)$. Then
\begin{align*}
(1-\frac{1}{a})M(\hat\bsbB-\bsbB^*, \hat\bsbC - \bsbC^*)  
\leq \, &   2 (1-\theta)\lambda \{(1+\vartheta)   \| (\hat \bsbC - \bsbC^*)_{\mathcal J^*}\|_{2,1}  - \| (\hat \bsbC - \bsbC^*)_{{{\mathcal J}^*}^c}\|_{2,1} \}\\&+2aA_0 \sigma^2 r(m+q) + R    \\
\le  \, &  2 (1-\theta)\Big[\sigma \zeta  \{ {(m+q)r} \}^{1/2} \{M(\hat\bsbB-\bsbB^*, \hat\bsbC - \bsbC^*) \}^{1/2}  \\ & -n\| \hat \bsbB - \bsbB^*\|_{\tF}^2 \Big]+ 2aA_0 \sigma^2 r(m+q) + R\mbox{.}
\end{align*}
The conclusion follows by applying   H{\"o}lder's inequality and   setting, say,    $a = 2 + 1/\theta$, $b = 1/2\theta$
and $A\ge (a b A_0)^{1/2}$.
\end{proof}


\section{Simulations}
\label{sec:sim}


\subsection{Simulation setups}

We consider three model setups. In Models I and II, we set $n=100$, $p=12$, $m=8$, and $r^*=3$. The design matrix $\bsbX$ is generated by sampling its $n$ rows from $N(\bsb0,\bsbDelta_0)$, where $\bsbDelta_0$ is with diagonal elements 1 and off-diagonal elements 0.5. This brings in wide-range predictor correlation. The rows of the error matrix $\bsbE$ are generated as independently and identically distributed samples from $N(\bsb0,\sigma^2\bsbSig_0)$. Models I and II differ in their error structures. In Model I, we set $\bsbSig_0=\bsbI$, whereas in Model II, $\bsbSig_0$ has the same compound symmetry structure as $\bsbDelta_0$. In each simulation, $\sigma^2$ is computed to control the signal to noise ratio, defined as the ratio between the $r^*$th singular value of $\bsbX\bsbB^*$ and $\|\bsbE\|_{\textrm{F}}$. 

Model III is a high-dimensional setup with $n=100$, $p=500$, $m=50$, $r^*=3$ and $q=10$. As such, there are 25,000 unknown parameters in the coefficient matrix, posing a challenging high-dimensional problem. The design is generated as $\bsbX=\bsbX_1\bsbX_2\bsbDelta_0^{1/2}$, where $\bsbX_1\in \mathbb{R}^{n\times q}$, $\bsbX_2\in \mathbb{R}^{q\times p}$, and all entries of $\bsbX_1$ and $\bsbX_2$ are independently and identically distributed samples from $N(0, 1)$. The error structure is the same as in Model II. 


In each of the three models, $\bsbB^*$ is randomly generated as $\bsbB^* = \bsbB_1\bsbB_2^\T$ in each simulation, where $\bsbB_1 \in \mathbb{R}^{p\times r^*}$, $\bsbB_2\in \mathbb{R}^{m\times r^*}$ and all entries in $\bsbB_1$ and $\bsbB_2$ are independently and identically distributed samples from $N(0,1)$. Outliers are then added by setting the first $n\times O\%$ rows of $\bsbC^*$ to be  nonzero, where $O\% \in \{5\%,10\%,15\%\}$. 
Concretely, the $j$th entry in any outlier row of $\bsbC^*$ is   $\alpha$ times the standard deviation of the $j$th column of $\bsbX\bsbB^*$, where $1\leq j\leq m$ and $\alpha=2,4$. To make the problem even more challenging, we modify all entries of the first two rows of the design  to $10$. This yields some outliers with high leverage values. Finally, the response $\bsbY$ is generated as $\bsbY = \bsbX\bsbB^* + \bsbC^* + \bsbE$. Overall,  the  signal  is contaminated by both random errors and gross outliers. Under each setting, the entire data generation process described above is replicated 200 times.




\subsection{Methods and evaluation metrics}

We compare the proposed robust reduced-rank regression with several robust regression approaches and rank reduction methods. There exist many robust multivariate regression methods in the traditional large-$n$ setting. We mainly consider the MM-estimator by \citet{Tatsuoka2000}, using its implementation provided by the R package \texttt{FRB} and the default settings therein. Other robust estimators including the S-estimator \citep{VanAelst2005} and the GS-estimator \citep{Roelant2009} were also examined; we omit their results here, as they were similar to or slightly worse than those of the MM-estimator. None of  these classical methods is  applicable in high dimensions, and so   they were only used on the datasets generated according to  Models I and II.


For reduced-rank methods, we consider the plain reduced-rank regression \citep{bunea2011} and the reduced-rank ridge regression \citep{mukh2011,She2013}, both tuned by 10-fold cross validation. The latter method combines rank reduction and shrinkage estimation, which can potentially improve the predictive performance of the former when the predictors exhibit strong correlation.

We also consider a three-step fitting-detection-refitting procedure. Specifically, the first step is to fit a plain reduced-rank regression using all data; in the second step, the value of the residual sum of squares is computed for each of the $n$ observation rows, and exactly $n\times O\%$ observations with the largest residual sum of squares are labeled as outliers and discarded; at the third step, the plain reduced-rank regression is refitted with the rest of the observations. This method can be regarded as a naive oracle procedure, as it relies on the knowledge of the true number of outliers.

As for the proposed robust reduced-rank regression, we used the $\ell_0$ penalized form and  the predictive information criterion    for tuning. Our method allows the incorporation of the error structure through setting the weighting matrix $\bsbGamma$; see Equation \eqref{eq:r4Gamma} of the paper. To investigate the impact of weighting, we considered both $\bsbGamma=\bsbI$ and $\bsbGamma=\hat{\bsbSig}^{-1}$ in the setting of Model II, where $\hat{\bsbSig}$ is a robust estimate of $\bsbSig=\sigma^2\bsbSig_0$ from MM-estimation. Since it is in general difficult to estimate $\bsbSig$ in high dimensional settings, for the data generated in Model III we just set $\bsbGamma=\bsbI$. For each rank value $r=1,\ldots,\min(n,q)$, we compute the solutions over a grid of 100 $\lambda$ values equally spaced on the log scale, corresponding to a proper interval of the proportion of outliers given by $[v_L,v_U]$. We take $v_L=0$ and $v_U\approx0.4$, as in practice the proportion of outliers is usually under 40\%. All the methods are implemented in a user-friendly R package.





To characterize estimation accuracy robustly, we report the 10\% trimmed mean of the mean squared error from all runs,
$$
\mbox{Err}(\hat{\bsbB})=\|\bsbX\bsbB^*-\bsbX\hat{\bsbB}\|_{\tF}^2/(mn)\mbox{.}
$$
In Model II, we additionally report the 10\% trimmed mean of the weighted mean squared errors from all runs, defined as
$$
\mbox{Err}(\hat{\bsbB};\bsbSig)=\tr\{(\bsbX\bsbB^*-\bsbX\hat{\bsbB})\bsbSig^{-1}(\bsbX\bsbB^*-\bsbX\hat{\bsbB})^\T\}/(mn),
$$
where $\bsbSig=\sigma^2\bsbSig_0$ is the true error covariance matrix. Similarly, the prediction error is defined as
$$
\mbox{Err}(\hat\bsbB,\hat\bsbC) =\|\bsbX\bsbB^* + \bsbC^*-\bsbX\hat{\bsbB}-\hat{\bsbC}\|_{\tF}^2/(mn)\mbox{.}
$$
While the robust reduced-rank regression explicitly   estimates $\bsbC^*$, this is not the case for the other approaches. In the plain reduced-rank regression and the reduced-rank ridge regression, $\hat{\bsbC}$ is set as a zero matrix, while in the MM estimation and the three-step procedure, the rows in $\hat{\bsbC}$ corresponding to the identified outliers are filled with model residuals in $\bsbY - \bsbX\hat{\bsbB}$. The leverage points, if exists, are removed from $\bsbX$ in the above calculations.

To evaluate the rank selection performance, we report the average of rank estimates from all runs. To examine the outlier detection performance, we report the average masking rate, i.e., the fraction of undetected outliers, the average swamping rate, i.e., the fraction of good points labeled as outliers, and the frequency of correct joint outlier detection, i.e., the fraction of simulations with no masking and no swamping.


\subsection{Simulation results}


Tables \ref{table1}--\ref{table3} summarize the simulation results of Models I--III, respectively, for $\alpha=2$ and signal to noise ratio 0.75. We omit the results in other settings since they deliver similar messages.

In Models I and II, the MM-estimates achieved better predictive performance than both reduced-rank regression and reduced-rank ridge regression. This demonstrates that when severe outliers are present, it is pivotal to perform robust estimation. 
Even in these low-dimensional   settings, the proposed robust reduced-rank regression  outperforms all other methods, and perfectly detects all   outliers jointly.   MM-estimation can also  achieve  pretty low masking rates, but this comes at the cost of increasing   false positives, which translates to efficiency loss. In particular,   when the errors  become correlated, our robust reduced-rank regression still showed impressive performance in both prediction and outlier detection. Additionally, the inverse covariance weighting did show some improvements over the identity weighting, but the gain was small.



Both reduced-rank regression and reduced-rank ridge regression tended to overestimated the rank in the presence of highly  leveraged outliers. This complies with the theoretical results, cf. Remark \ref{remTh5} following Theorem \ref{th_oracle-l1}. In contrast,   robust reduced-rank regression achieved nearly  perfect rank selection in all the experiments. The three-step procedure relies on the      accuracy of the estimated model residuals, and   often fails in the presence of leverage points. In practice, making a judgement of  the   number of  outliers is critical. One merit of the proposed method   is that  the theoretically justified predictive information criterion  can   choose suitable parameters regardless of the size of $n$, $m$, or $p$, leading to an automatic identification of the right amount of outlyingness from a predictive learning perspective.  






Similar conclusions can be drawn from the comparison in the high-dimensional model. Indeed, according to Table \ref{table3},  the robust reduced-rank regression showed comparable or better performance than the other methods in almost all categories.

\begin{table}[htp]
\caption{\label{table1} Simulation  results of Model I with $\alpha=2$ and signal to noise ratio 0.75. The errors are reported with their standard errors in parentheses}
\centering
\begin{tabular}{lrrrrrr}
           & Err($\hat{\bsbB}$) & Err($\hat{\bsbB}$, $\hat{\bsbC}$) &      Rank  &       Mask &      Swamp &  Detection \\

           &                                                    \multicolumn{ 6}{c}{5\%} \\

        MM &  0$\cdot$4 (0$\cdot$2) &  4$\cdot$2 (1$\cdot$7) &        8$\cdot$0 &        0\% &      3$\cdot$7\% &        0\% \\

       RRR &  2$\cdot$9 (3$\cdot$7) &  6$\cdot$1 (4$\cdot$4) &        3$\cdot$6 &      100\% &        0\% &        0\% \\

       RRS &  1$\cdot$8 (0$\cdot$8) &  4$\cdot$7 (1$\cdot$7) &        4$\cdot$0 &      100\% &        0\% &        0\% \\

       RRO &  0$\cdot$3 (0$\cdot$3) &    1$\cdot$2 (1) &        3$\cdot$1 &     18$\cdot$1\% &        1\% &     28$\cdot$5\% \\

$\mbox{R}^4$ &  0$\cdot$2 (0$\cdot$1) &  0$\cdot$3 (0$\cdot$1) &        3$\cdot$0 &        0\% &        0\% &      100\% \\


           &                                                   \multicolumn{ 6}{c}{10\%} \\

        MM &  0$\cdot$4 (0$\cdot$2) &   12$\cdot$3 (6) &        8$\cdot$0 &        0\% &      2$\cdot$6\% &      1$\cdot$5\% \\

       RRR &    5$\cdot$4 (5) & 15$\cdot$9 (8$\cdot$5) &        3$\cdot$5 &      100\% &        0\% &        0\% \\

       RRS &  3$\cdot$5 (2$\cdot$4) & 14$\cdot$3 (9$\cdot$7) &        4$\cdot$1 &      100\% &        0\% &        0\% \\

       RRO &  0$\cdot$3 (0$\cdot$2) &    2 (1$\cdot$3) &        3$\cdot$0 &     13$\cdot$3\% &      1$\cdot$5\% &     20$\cdot$5\% \\

$\mbox{R}^4$ &  0$\cdot$2 (0$\cdot$1) &  0$\cdot$4 (0$\cdot$2) &        3$\cdot$0 &        0\% &        0\% &      100\% \\


           &                                                   \multicolumn{ 6}{c}{15\%} \\

        MM &  0$\cdot$5 (0$\cdot$4) & 17$\cdot$8 (6$\cdot$6) &        8$\cdot$0 &      0$\cdot$1\% &      1$\cdot$4\% &       24\% \\

       RRR &  4$\cdot$4 (2$\cdot$1) & 17$\cdot$9 (5$\cdot$5) &        3$\cdot$8 &      100\% &        0\% &        0\% \\

       RRS &    4 (2$\cdot$5) & 18$\cdot$4 (6$\cdot$1) &        3$\cdot$9 &      100\% &        0\% &        0\% \\

       RRO &  0$\cdot$5 (0$\cdot$3) &  2$\cdot$3 (1$\cdot$5) &        3$\cdot$0 &      8$\cdot$9\% &      1$\cdot$6\% &     27$\cdot$5\% \\

$\mbox{R}^4$ &  0$\cdot$3 (0$\cdot$2) &  0$\cdot$8 (0$\cdot$5) &        2$\cdot$9 &        0\% &        0\% &      100\% \\


\end{tabular}
\end{table}

\begin{table}[htp]
\caption{\label{table2} Simulation  results of Model II with $\alpha=2$ and signal to noise ratio 0.75.  The layout of the table is similar to that of Table \ref{table1}}
\centering
\begin{tabular}{lrrrrrrr}

           & Err($\hat{\bsbB}$) & Err($\hat{\bsbB}$; $\Sigma$) & Err($\hat{\bsbB}$, $\hat{\bsbC}$) &      Rank  &       Mask &      Swamp &  Detection \\

           &                                                                 \multicolumn{ 7}{c}{5\%} \\

        MM &  0$\cdot$4 (0$\cdot$3) &  0$\cdot$4 (0$\cdot$3) &  6$\cdot$9 (2$\cdot$9) &        8$\cdot$0 &        0\% &      3$\cdot$3\% &        0\% \\

       RRR &  2$\cdot$6 (2$\cdot$4) &  4$\cdot$6 (4$\cdot$3) &  9$\cdot$8 (6$\cdot$2) &        4$\cdot$0 &      100\% &        0\% &        0\% \\

       RRS &  1$\cdot$9 (1$\cdot$4) &  3$\cdot$3 (2$\cdot$5) &  8$\cdot$5 (4$\cdot$4) &        4$\cdot$3 &      100\% &        0\% &        0\% \\

       RRO &  0$\cdot$4 (0$\cdot$3) &  0$\cdot$5 (0$\cdot$3) &  2$\cdot$7 (1$\cdot$8) &        3$\cdot$0 &     25$\cdot$7\% &      1$\cdot$4\% &       17\% \\

       $\mbox{R}^4$ &  0$\cdot$2 (0$\cdot$2) &  0$\cdot$2 (0$\cdot$2) &  0$\cdot$3 (0$\cdot$2) &        3$\cdot$0 &        0\% &      0$\cdot$2\% &       84\% \\


       $\mbox{R}^4_w$ &  0$\cdot$2 (0$\cdot$1) &  0$\cdot$2 (0$\cdot$2) &  0$\cdot$3 (0$\cdot$2) &        3$\cdot$0 &        0\% &        0\% &      100\% \\

           &                                                                \multicolumn{ 7}{c}{10\%} \\

        MM &  0$\cdot$5 (0$\cdot$3) &  0$\cdot$5 (0$\cdot$4) & 21$\cdot$2 (9$\cdot$7) &        8$\cdot$0 &        0\% &      1$\cdot$9\% &     12$\cdot$5\% \\

       RRR &  3$\cdot$6 (1$\cdot$1) &  6$\cdot$5 (2$\cdot$3) & 21$\cdot$7 (9$\cdot$1) &        4$\cdot$1 &      100\% &        0\% &        0\% \\

       RRS &    4 (1$\cdot$8) &  7$\cdot$4 (3$\cdot$7) & 24$\cdot$6 (10$\cdot$6) &        4$\cdot$0 &      100\% &        0\% &        0\% \\

       RRO &  0$\cdot$4 (0$\cdot$2) &  0$\cdot$6 (0$\cdot$3) &  4$\cdot$3 (2$\cdot$1) &        3$\cdot$0 &     16$\cdot$4\% &      1$\cdot$8\% &      4$\cdot$5\% \\

        $\mbox{R}^4$ &  0$\cdot$3 (0$\cdot$2) &  0$\cdot$4 (0$\cdot$3) &  0$\cdot$7 (0$\cdot$6) &        3$\cdot$0 &        0\% &        0\% &     99$\cdot$5\% \\


       $\mbox{R}_w^4$ &  0$\cdot$2 (0$\cdot$1) &  0$\cdot$3 (0$\cdot$2) &  0$\cdot$6 (0$\cdot$4) &        3$\cdot$0 &        0\% &        0\% &      100\% \\

           &                                                                \multicolumn{ 7}{c}{15\%} \\

        MM &  0$\cdot$4 (0$\cdot$2) &  0$\cdot$4 (0$\cdot$2) & 31$\cdot$3 (12$\cdot$4) &        8$\cdot$0 &        0\% &      1$\cdot$1\% &     46$\cdot$5\% \\

       RRR &  4$\cdot$5 (2$\cdot$7) &  7$\cdot$9 (5$\cdot$2) & 33$\cdot$4 (13$\cdot$4) &        4$\cdot$3 &      100\% &        0\% &        0\% \\

       RRS &  4$\cdot$8 (3$\cdot$4) &  8$\cdot$7 (6$\cdot$8) & 36$\cdot$5 (16$\cdot$1) &        4$\cdot$0 &      100\% &        0\% &        0\% \\

       RRO &  0$\cdot$4 (0$\cdot$2) &  0$\cdot$6 (0$\cdot$2) &  3$\cdot$3 (1$\cdot$4) &        3$\cdot$0 &      9$\cdot$4\% &      1$\cdot$7\% &       10\% \\

        $\mbox{R}^4$ &  0$\cdot$2 (0$\cdot$2) &  0$\cdot$3 (0$\cdot$2) &  0$\cdot$6 (0$\cdot$3) &        3$\cdot$0 &      0$\cdot$3\% &        0\% &     95$\cdot$5\% \\


       $\mbox{R}_w^4$ &  0$\cdot$2 (0$\cdot$1) &  0$\cdot$2 (0$\cdot$1) &  0$\cdot$5 (0$\cdot$2) &        3$\cdot$0 &        0\% &        0\% &      100\% \\

\end{tabular}
\end{table}

\begin{table}[htp]
\caption{\label{table3} Simulation  results of Model III with $\alpha=2$ and signal to noise ratio 0.75. The values of actual Err($\hat{\bsbB}$) and Err($\hat{\bsbB}$, $\hat{\bsbC}$) are divided by 100 for better presentation. The layout of the table is similar to that of Table \ref{table1}}
\centering
\begin{tabular}{lrrrrrr}

           & Err($\hat{\bsbB}$) & Err($\hat{\bsbB}$, $\hat{\bsbC}$) &      Rank  &       Mask &      Swamp &  Detection \\

           &                                                    \multicolumn{ 6}{c}{5\%} \\

       RRR &  2$\cdot$5 (0$\cdot$9) & 15$\cdot$5 (6$\cdot$3) &        4$\cdot$0 &      100\% &        0\% &        0\% \\

       RRS &  2$\cdot$4 (0$\cdot$9) & 15$\cdot$6 (6$\cdot$3) &        4$\cdot$0 &      100\% &        0\% &        0\% \\

       RRO &    1 (0$\cdot$6) &  3$\cdot$9 (3$\cdot$9) &        3$\cdot$0 &     11$\cdot$3\% &      0$\cdot$6\% &     67$\cdot$5\% \\

$\mbox{R}^4$ &  0$\cdot$9 (0$\cdot$5) &  1$\cdot$6 (0$\cdot$9) &        3$\cdot$0 &      1$\cdot$6\% &        0\% &       96\% \\


           &                                                   \multicolumn{ 6}{c}{10\%} \\

       RRR &  5$\cdot$4 (2$\cdot$3) &  47$\cdot$5 (18) &        4$\cdot$0 &      100\% &        0\% &        0\% \\

       RRS &  5$\cdot$1 (2$\cdot$1) &  47$\cdot$8 (18) &        4$\cdot$0 &      100\% &        0\% &        0\% \\

       RRO &  0$\cdot$8 (0$\cdot$4) &  5$\cdot$1 (4$\cdot$6) &        3$\cdot$0 &      4$\cdot$9\% &      0$\cdot$5\% &     68$\cdot$5\% \\

$\mbox{R}^4$ &  0$\cdot$7 (0$\cdot$3) &  2$\cdot$2 (0$\cdot$9) &        3$\cdot$0 &        0\% &        0\% &      100\% \\


           &                                                   \multicolumn{ 6}{c}{15\%} \\

       RRR &  8$\cdot$7 (4$\cdot$2) &  77 (39$\cdot$9) &        4$\cdot$0 &      100\% &        0\% &        0\% \\

       RRS &    8 (3$\cdot$6) &  77$\cdot$4 (40) &        4$\cdot$0 &      100\% &        0\% &        0\% \\

       RRO &  1$\cdot$4 (0$\cdot$8) & 11$\cdot$9 (8$\cdot$5) &        3$\cdot$0 &      9$\cdot$7\% &      1$\cdot$7\% &       24\% \\

$\mbox{R}^4$ &  0$\cdot$8 (0$\cdot$3) &  3$\cdot$1 (1$\cdot$1) &        3$\cdot$2 &      3$\cdot$2\% &        0\% &     75$\cdot$5\% \\


\end{tabular}
\end{table}

\clearpage
\bibliographystyle{Chicago}

\bibliography{r4}{}


\end{document}